\def\wtil{\widetilde}
 \newcommand{\N}{\mathbb N}
 \newcommand{\bee}{\begin{equation}}
 \newcommand{\eee}{\end{equation}}
 \newcommand{\Lb}{\mbox {\boldmath ${\Lambda}$}}
 \newcommand{\Gb}{\mbox {\boldmath ${\Gamma}$}}
 \newcommand{\Lbs}{\mbox{\scriptsize\boldmath ${\Lambda}$}}
 \newcommand{\Lbt}{\mbox{\tiny\boldmath ${\Lambda}$}}
 \newcommand{\Gbb}{\mbox {\bf P}}
 \newcommand{\Gbbs}{\mbox {\scriptsize{\bf P}}}
 \def\Gbb{\mbox{\bf G}}
 \def\Gbbs{\mbox {\scriptsize{\bf G}}}
 \newcommand{\Qb}{\mbox {\bf Q}}
 \def\Chi{\mbox{\large${\chi}$}}
\newcommand{\diam}{\mbox{\rm diam}}
\newcommand{\be}{\begin{eqnarray}}
\newcommand{\ee}{\end{eqnarray}}
\newcommand{\supp}{\mbox{\rm supp}}
\newcommand{\freq}{\mbox{\rm freq}}
\newcommand{\Vol}{\mbox{\rm Vol}}
\newcommand{\eps}{{\mbox{$\epsilon$}}}
\newcommand{\R}{{\mathbb R}}
\newcommand{\Q}{{\mathbb Q}}
\newcommand{\Z}{{\mathbb Z}}
\newcommand{\C}{{\mathbb C}}
\newcommand{\Ak}{{\mathcal A}}
\newcommand{\Sf}{{\mbox{\sf S}}}
\newcommand{\Dk}{{\mathcal D}}
\newcommand{\Pk}{{\mathcal P}}
\newcommand{\Int}{{\rm int}}
\newcommand{\Ck}{{\mathcal C}}
\newcommand{\Kk}{{\mathcal K}}
\newcommand{\Vk}{{\mathcal V}}
\newcommand{\Sk}{{\mathcal S}}
\newcommand{\Tk}{{\mathcal T}}
\newcommand{\Xk}{{\mathcal X}}
\newcommand{\dist}{\mbox{\rm dist}}
\newcommand{\Lam}{{\Lambda}}
\newcommand{\lam}{\lambda}
\newcommand{\om}{\omega}
\def\sp{{\rm sp}}
\newcommand{\balpha}{\mbox{\boldmath{$\alpha$}}}
 \newtheorem{theorem}{Theorem}[section]
 \newtheorem{lemma}[theorem]{Lemma}
 \newtheorem{prop}[theorem]{Proposition}
 \newtheorem{cor}[theorem]{Corollary}
 \newtheorem{defi}[theorem]{Definition}
 \newtheorem{example}[theorem]{Example}
 \newtheorem{remark}[theorem]{Remark}
\numberwithin{equation}{section}
\begin{document}

\title[Substitution tilings without FLC]{On substitution tilings and Delone sets without finite local complexity}


\hspace*{12pt}
\email{jylee@cku.ac.kr}
\email{bsolom3@gmail.com}

\date{\today}

\thanks{2000 {\em Mathematics Subject Classification: Primary: 37B50; Second: 52C23.}
\\
\indent{\em Key words and phrases: non-FLC, Meyer sets, discrete spectrum, Pisot family, weak mixing.}}

\maketitle

\centerline{Jeong-Yup Lee $^{\,\rm a}$ and Boris Solomyak $^{\,\rm b}$}

\hspace*{4em}

{\footnotesize
 \hspace*{4em} a: Department of Mathematics Education, Catholic Kwandong University, 
\\ \hspace*{4em} \hspace*{2.5em} Gangneung, Gangwon 210-701, Korea

\smallskip

\hspace*{4em} b: Department of Mathematics, Bar-Ilan University, Ramat-Gan 52900,
\\ \hspace*{4em} \hspace*{2.5em}   Israel}

\begin{abstract}
We consider substitution tilings and Delone sets without the assumption of finite local complexity (FLC). We first give a sufficient condition for tiling dynamical systems to be uniquely ergodic and a formula for the measure of cylinder sets. We then obtain several results on their ergodic-theoretic properties, notably absence of strong mixing and conditions for existence of eigenvalues, which have number-theoretic consequences. In particular, if the set of eigenvalues of the expansion matrix is totally non-Pisot, then the tiling dynamical system is weakly mixing. Further, we define the notion of rigidity for substitution tilings and demonstrate that the result of \cite{LeeSol:12} on the equivalence of four properties: 
relatively dense discrete spectrum, being not weakly mixing, the Pisot family, and the Meyer set property, extends to the non-FLC case, if we assume rigidity instead.
\end{abstract}


\section{Introduction}

Delone sets and tilings of the Euclidean space $\R^d$  have been used for a long time to model physical structures. It is often fruitful, moreover, to consider not just an individual object (tiling or Delone set), but rather its ``hull,'' defined as a closure of the collection of all its translates in a natural topology. The group $\R^d$ acts on the hull by translations, and we obtain a dynamical system, whose properties have a direct connection with the physical properties of the material. Some of the references to this approach are \cite{RW,Hof,BHZ,Len-Sto}, see also the recent book \cite{BaakeGrimm}.
One  of the central issues for the resulting dynamical system is whether it has {\em unique ergodicity}, which allows one to use powerful ergodic theorems that are important for applications.
Another much studied aspect of a system is its {\em spectrum}, and it has been realized for a long time that there is a close link between the ``dynamical'' and the ``diffraction'' spectrum. The list of references here is too long, so we refer the reader to the recent survey \cite{BaaLen17} and its bibliography. 

Here we  investigate dynamical systems associated with substitution tilings and Delone sets in $\R^d$, without the assumption of finite local complexity (FLC). Whereas the theory of FLC tilings is well-developed, much less is known about non-FLC tilings, which are sometimes called ILC (for ``infinite local complexity''). We consider primitive substitution tilings, with finitely many prototiles up to translation. In this class, the ILC phenomenon (at least, in the planar case) is generally caused  by tilings with tiles whose edges are ``sliding'' (in the literature, this phenomenon has been referred to as an ``earthquake'' or a ``fault line''). Thus our setting does not cover pinwheel type tilings in which prototiles appear with infinitely many rotations. For dynamical properties of the latter, we refer the reader to \cite{Radin, Fre-Ric}.
First examples of ILC substitution tilings were constructed by Danzer \cite{Danzer} and Kenyon \cite{Kenyon92}, and a large class of such tilings was studied by Frank and Robinson \cite{FraRob}. More recently, investigation of ILC tilings appeared in the work of Frank and  Sadun
\cite{Fra-Sa,FraSa2}, see also \cite{Frank}, in a general framework of ``fusion tilings''. In particular, they established unique ergodicity under some assumptions and obtained a number of results on the topological and ergodic-theoretic properties of ILC tilings.

In this paper, we revisit the theory developed in \cite{soltil,LMS2,sol-eigen,LeeSol:08,LeeSol:12} and extend parts of it to include the ILC case.
We work in parallel in the frameworks of Delone $\kappa$-sets and tilings,  which are essentially equivalent. The notion of Delone $\kappa$-set formalizes the idea of a ``coloured Delone set,'' in which to each point a particular colour is assigned, from a finite list. This way we can model structures with atoms of different kind. The first issue that we address is unique ergodicity.
In the FLC case unique ergodicity is equivalent to the existence of {\em uniform cluster frequencies} (UCF) \cite{LMS1}. In the ILC case the UCF property is no longer sufficient; however, we show that under an additional technical condition, the unique ergodicity follows, and this is our first main result, Theorem~\ref{th-UE2}. Although  this theorem is general, it is adapted for substitution tilings with finitely many prototiles up to translation, but does not apply, for instance, to the pinwheel tiling. Along the way we also obtain a formula for the measure of cylinder sets in terms of patch frequencies.

   In the FLC case the property of ``linear repetitivity'' is known to imply unique ergodicity, see \cite[Thm 6.1]{LP}, \cite[Thm 2.7]{LMS1}, and \cite[Cor 4.6]{DL}.  Frettl\"{o}h and Richard \cite{Fre-Ric} developed versions of this property suitable for the ILC case. The two main ones, the ``linear wiggle-repetitivity'' and ``almost linear repetitivity'' were shown in \cite{Fre-Ric} to imply unique ergodicity. The former property is satisfied, in particular, for the pinwheel tiling. It is an open question whether for tilings with fault lines, like in \cite{FraRob}, the corresponding point sets are almost linearly repetitive. We should note that the results of Frank and Sadun \cite{FraSa2} on unique ergodicity are more general and cover substitution systems of both kinds; however, the framework that we develop is better suited for our purposes in the sequel.

Next we turn to ILC substitution systems under our assumptions, and to their ergodic-theoretic and spectral properties.
We prove that these systems are not strongly mixing and  obtain a necessary condition for eigenvalues, generalizing results of \cite{soltil,sol-eigen}. If we also assume repetitivity and recognizability, as well as an algebraic property of the expansion map, this condition becomes sufficient, and all eigenfunctions may be chosen continuous. As in the FLC case, this leads to number-theoretic considerations.
If the expansion is a pure dilation, then the existence of non-trivial eigenfunctions (equivalently, absence of weak mixing) implies that the expansion constant is a Pisot (or PV) number. More generally, if the set of eigenvalues of the expansion map $Q$ is ``totally non-Pisot'' \cite{Robi.lec}, then the tiling dynamical system is weakly mixing.
Interestingly, if the set of eigenvalues {of the dynamical system} form a relatively dense subset of  $\R^d$, this forces the set of translation vectors between equivalent tiles to be Meyer, and this implies FLC. This phenomenon is also related to an important property of the tiling which we call ``rigidity''. Rigidity was  established  in \cite{LeeSol:12} for FLC primitive substitution tilings, under some assumptions of algebraic nature.
 Actually, rigidity is easy to check directly in examples, and it holds for many ILC tilings, specifically, for those considered in \cite{FraRob}. However, it does not hold for Kenyon's tiling \cite{Kenyon92} and its relatives. For these tilings, the dynamical system may have non-trivial discrete spectrum; however, it will not be relatively dense.
We demonstrate that the result of \cite{LeeSol:12} on the equivalence of four properties: 
relatively dense discrete spectrum, being not weakly mixing, the Pisot family, and the Meyer set property, extends to the non-FLC case, if we assume rigidity instead.

Let us comment on the structure of the paper and on the issue of Delone $\kappa$-sets vs.\ tilings. Section \ref{preliminary} is devoted to preliminaries.
In section \ref{unique-ergodicity}, we introduce a special class of patches and cylinder sets which generate topology and the Borel $\sigma$-algebra of the space under consideration.
We then provide sufficient conditions for unique ergodicity in terms of frequencies of these patches (Theorem \ref{th-UE2} and Corollary \ref{cor-tech}). All this section, which is of central importance for us, is entirely in terms of Delone $\kappa$-sets. Starting from Section \ref{substitution-tilings} we work in parallel with substitution tilings and (representable) substitution Delone $\kappa$-sets, which are, in a sense, dual to each other, and we think that both viewpoints are important. The switch between these two settings is made by the results of \cite{lawa, LMS2}. 
We show that the sufficient condition for unique ergodicity holds for ILC primitive substitution tilings,  so it is valid for representable primitive substitution Delone $\kappa$-sets as well.
In section \ref{relDenseEigenvalue-PisotFamily-Meyerset}, we prove  Lemma \ref{intersection-of-cylinderSet} which implies absence of strong mixing and provides the key tool for the characterization of dynamical eigenvalues in Theorem \ref{eigen-thm}. 
In section \ref{section:Rigidity} we define the rigidity and consider its implications. In particular, we give an answer to a question of  Lagarias from \cite{Lag00} in a more general setting than in \cite{LeeSol:08} (without the FLC assumption), namely,  a primitive substitution tiling which is pure point diffractive must have the Meyer property. We also include several examples.

\section{Preliminaries} \label{preliminary}

\subsection{Delone $\kappa$-sets}

A {\em $\kappa$-set} or $\kappa$-coloured set in $\R^d$ is a subset $\Lb$ of $\R^d\times \{1,\ldots,\kappa\}$ whose projection on $\R^d$ is 1-to-1. Here $\kappa$ is a number of colours, and the set
$\Lam_i=\{x\in \R^d\,|\,(x,i)\in \Lb\}$ has colour $i$. It is convenient to write $\Lb = (\Lam_1, \dots, \Lam_{\kappa}) = (\Lam_i)_{i\le \kappa}$ and think about it as a ``vector of sets''.
Recall that a Delone set is a relatively dense and uniformly discrete subset of $\R^d$.
We say that $\Lb=(\Lambda_i)_{i\le \kappa}$ is a {\em Delone $\kappa$-set} \footnote{In the literature \cite{lawa}, there is also the notion of {\em Delone multisets}; they differ from the Delone $\kappa$-sets. In a Delone multiset, points with the same colour may be counted with multiplicity.} in $\R^d$ if
each $\Lambda_i$ is Delone and $\supp(\Lb):=\bigcup_{i=1}^{\kappa} \Lambda_i \subset \R^d$ is Delone. 
A {\em cluster} of $\Lb$ is, by definition,
a family $\Gbb = (G_i)_{i\le \kappa}$ where $G_i \subset \Lambda_i$ is
finite for all $i\le \kappa$.
Many of the clusters that we consider have the form
$ \Lb \cap A := (\Lambda_i \cap A )_{i\le \kappa}$, for a bounded set
 $A\subset \R^d$.
The translate of a cluster $\mbox{\bf G}$ by $x \in \R^d$ is
$x + \Gbb = (x + G_i)_{i\le \kappa}$.
We say that two clusters $\Gbb$ and $\Gbb'$ are {\em translationally equivalent}
if $\Gbb=x+\Gbb'$ for some $x \in \R^d$.

We say that a Delone $\kappa$-set $\Lb$ has {\em finite local complexity(FLC)} if for every $R > 0$ there exists a finite set $Y \subset \supp(\Lb) = \bigcup_{i=1}^{\kappa} \Lam_i$ such that
for all $x \in \supp(\Lb)$, there exists $y \in Y$ for which $B_R(x) \cap \Lb = (B_R(y) \cap \Lb) + (x - y)$.

The Delone $\kappa$-set $\Lb$ is called {\em repetitive} if every $\Lb$-cluster occurs relatively dense in space, up to translation. More precisely,  this means that for any cluster
$\Gbb\subset \Lb$ there exists $M>0$ such that every ball of radius $M=M(\Gbb)$ contains a translated copy of $\Gbb$. (In some papers, see e.g.\ \cite{Fre-Ric}, this property is called
{\em weak repetitivity}, to distinguish it from the condition that  for every $R>0$ there exists $M=M(R)$ such that
{every ball of radius $M$} contains a translated copy of {\em every} $\Lb$-cluster of radius $R$. However, the latter property can only hold in the FLC case, and then the two repetitivity properties are equivalent.)

We will make use of the following notation:
\be \label{nota1}
F^{+r} := \{x\in \R^d:\,\dist(x,F)\le r\}\ \ \mbox{and}\ \ F^{-r}:= \{x\in F:\ \dist(x,\partial F) \ge r\}.
\ee

A {\em van Hove sequence} for $\R^d$ is a sequence
$\mathcal{F}=\{F_n\}_{n \ge 1}$ of bounded measurable subsets of
$\R^d$ satisfying
\be \label{Hove}
\lim_{n\to\infty} \Vol((\partial F_n)^{+r})/\Vol(F_n) = 0,~
\mbox{for all}~ r>0.
\ee

Suppose that a Delone $\kappa$-set $\Lb$ is given.  For a cluster $\Gbb$ of $\Lb$ and a bounded set $F\subset \R^d$, we denote
$$
L_{\Gbbs}(F,\Lb) = \# \{g\in \R^d:\ g+ \Gbb \subset \Lb,\ (g+\supp(\Gbb))\subset F\},
$$
where the symbol $\#$ stands for cardinality of a set. We sometimes write $L_{\Gbbs}(F,\Lb) = L_{\Gbbs}(F)$ when $\Lb$ is understood from the context.

\begin{defi} \label{def-ucf}
{\em Let $\{F_n\}_{n \ge 1}$ be a van Hove sequence.
The Delone $\kappa$-set $\Lb$ has {\em uniform cluster frequencies} (UCF)
(relative to $\{F_n\}_{n \ge 1}$) if for any cluster $\Gbb$,  the limit
$$
\freq(\Gbb,\Lb) = \lim_{n\to \infty} \frac{L_{\Gbbs}(x+F_n, \Lb)}{\Vol(F_n)} \ge 0,
$$
exists uniformly in $x\in \R^d$.}
\end{defi}

We will use the following notation:
for any Delone $\kappa$-set $\Gb = (\Gamma_i)_{i \le \kappa} $ and $B \subset \R^d$, we let
\[  \Gb + B := \{(x+t, i) \ | \ x \in \Gamma_i, t \in B, 1 \le i \le \kappa \},  \]
\[  \Gb + t = \{ (x+t, i) \ | \ x \in \Gamma_i, 1 \le i \le \kappa \},  \ \mbox{where $t \in \R^d $}, \]
\[  \Gb \textcircled{+} B := \{ \Gb + t \ | \ t \in B \}.  \] 


\subsection{Tilings}

We begin with a set of types (or colours) $\{1,\ldots, \kappa \}$, which
we fix once and for all. A {\em prototile} in $\R^d$ is defined as a
pair $T_i=(A_i,i)$ where $A_i=\supp(T_i)$ (the support of $T_i$) is a
compact set in $\R^d$, which is the closure of its interior, and
$i=l(T_i)\in \{1,\ldots, \kappa \}$ is the type of $T$. 
{Note that we do not assume here that the prototiles are connected.} 
A translate of a prototile $T_i$ by vector $g\in \R^d$ is $g+T_i = (g+A_i,i)$; it will be called a {\em tile of type} $i$.
A {\em tiling} of $\R^d$ is a set $\Tk$ of tiles such that $\R^d =
\bigcup \{\supp(T) : T \in \Tk\}$ and distinct tiles have disjoint
interiors. (Strictly speaking, we should say ``disjoint interiors of the support,'' but we sometimes allow this sloppiness of language, when it does not lead to a confusion.)

We say that a set $P$ of tiles is a {\em
patch} if the number of tiles in $P$ is finite and the tiles of
$P$ have mutually disjoint interiors. The {\em support of a patch}
is the union of the supports of the tiles that are in it. The {\em
translate of a patch} $P$ by $g\in \R^d$ is $g+P := \{g+T:\ T\in
P\}$. We say that two patches $P_1$ and $P_2$ are {\em
translationally equivalent} if $P_2 = g+P_1$ for some $g\in \R^d$.
Given a tiling $\Tk$, a finite set of tiles of $\Tk$ is
called a $\Tk$-patch. 

We define FLC and repetitivity for  tilings in the same way as the corresponding properties for  Delone  $\kappa$-sets.
The types (or colours) of tiles for tilings have the meaning analogous
to the colours of points for Delone $\kappa$-sets. We
always assume that any two $\Tk$-tiles of the same type are
translationally equivalent (hence there are finitely many
$\Tk$-tiles up to translations). For a patch $P$ and $F\subset \R^d$ we denote
$$
L_P(F,\Tk) = \#\{g\in \R^d:\ g + P \subset \Tk,\ (g + \supp(P)) \subset F\},
$$
and we sometimes write $L_P(F) = L_P(F,\Tk)$ when $\Tk$ is understood from the context.
Given a van Hove sequence $\{F_n\}_{n\ge 1}$, we define the property of uniform patch frequencies for a tiling $\Tk$, by analogy with uniform cluster frequencies (UCF) for Delone $\kappa$-sets.

\subsection{From a tiling to Delone $\kappa$-set} Given a tiling $\Tk$ of $\R^d$ with the prototile set $\Ak = \{T_1,\ldots,T_\kappa\}$, we can represent the tiling as follows:
$$
\Tk = \bigcup_{i=1}^\kappa (T_i + \Lam_i),
$$
where $\Lam_i$ is the set of all translates of $T_i$ which appear in $\Tk$. It is clear that every $\Lam_i$ is a Delone set. In order to make sure that $\bigcup_{i=1}^\kappa \Lam_i$ is also Delone, we can assume that every prototile contains the origin in the interior of the support. This way we obtain a Delone $\kappa$-set $\Lb_\Tk: = (\Lam_i)_{i\le \kappa}$, which will be called the {\em Delone $\kappa$-set associated to the tiling $\Tk$}.

Going from a general Delone $\kappa$-set $\Lb$ to a tiling is less obvious in the ILC setting. For instance, if we use the Voronoi tesselation corresponding to the Delone set $\supp(\Lb)$, there is no guarantee that we will get a tiling with a finite set of tiles up to translation. In Section \ref{substitution-tilings} we will discuss what can be done in the special case of substitution Delone $\kappa$-sets.

\subsection{Dynamical systems from point sets and tilings} \label{dynamical-system}

Let $\Lb = (\Lambda_i)_{i \le \kappa}$ be a Delone $\kappa$-set in $\R^d$ and let $X$ be the collection of all Delone $\kappa$-sets.
We equip $X$ with a metric, which comes from the ``local rubber topology'' (see \cite{Mu-Ric, Len-Sto, Baa-Len04} and references therein). This is, essentially, the 
 ``local'' topology used already in \cite{Fell}. 
Formally, this metric is defined as follows:
For $\Lb_1, \Lb_2 \in X$, 
\[ d(\Lb_1, \Lb_2): = \mbox{min}\{\widetilde{d}(\Lb_1, \Lb_2), 2^{-\frac{1}{2}}\},\]
where
$$
\widetilde{d}(\Lb_1, \Lb_2) = \mbox{inf} \Bigl\{ \epsilon>0\ | \ B_{\frac{1}{\epsilon}}(0) \cap \Lb_2 \subset \Lb_1 + B_{\epsilon}(0) \  \mbox{and}
  \ B_{\frac{1}{\epsilon}}(0) \cap \Lb_1  \subset \Lb_2 + B_{\epsilon}(0)  \Bigr\}. 
$$
It is a standard fact that this is indeed a metric. For the reader's convenience, we include the proof in the Appendix.

We consider the topology induced by this metric. 
For any $g \in \R^d$ consider the translation ${\sf T}_g: \Gb \mapsto \Gb -g$ on $X$. This defines a continuous action of the group $\R^d$ on $X$. The set $\{\Lb -g \ | \ g \in \R^d \}$ is the orbit of $\Lb$. 
Let $X_{\Lbs}$ be the orbit closure of $\Lb$. Then one can check that $X_{\Lbs}$ is compact in this metric (although $X$ itself is not!). The system $(X_{\Lbs}, \R^d)$ is called the point set dynamical system associated with $\Lb$.

Similarly, given a tiling $\Tk$, we  can consider the space of all tilings with a fixed finite number of prototiles,  with the metric induced from the metric on the associated Delone sets. Alternatively, an equivalent metric can be defined directly, using the Hausdorff distance between the boundaries of large patches around the origin, see e.g., \cite{Fra-Sa}. The  closure of the orbit $\{\Tk-g\ |\ g\in \R^d\}$ is compact; it will be denoted $X_\Tk$, and the translation $\R^d$-action $(X_\Tk,\R^d)$ will be called the tiling dynamical system associated with $\Tk$. The following is immediate from the definition of the metric and from the assumption that we have a finite number of prototiles, but it is  of key importance for us, so we state it explicitly:

\begin{lemma} \label{lem-proto}
Every tiling $\Sk$ in the space $X_\Tk$ has the same collection of prototiles as $\Tk$.
\end{lemma}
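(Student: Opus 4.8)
The plan is to unwind the definition of the metric $d$ on $X_\Tk$ and combine it with the finiteness of the prototile set. Recall that a tiling $\Sk$ lies in $X_\Tk$ precisely when there is a sequence $g_n \in \R^d$ with $\Tk - g_n \to \Sk$ in the metric $d$; equivalently, for every $\e > 0$ there is $g \in \R^d$ such that the patch of $\Tk - g$ in the ball $B_{1/\e}(0)$ and the patch of $\Sk$ in $B_{1/\e}(0)$ agree up to a perturbation of size $\e$ (in the sense that each is contained in an $\e$-neighbourhood — in the local rubber topology — of the other). I would first fix a tile $T \in \Sk$, say with $0$ in the interior of its support; then for small $\e$ the matching condition forces $\Tk - g$ to contain a tile $T'$ whose support is within Hausdorff distance $O(\e)$ of $\supp(T)$ and which has the same type label as $T$.

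Next I would invoke the standing assumption, stated just before the lemma, that any two $\Tk$-tiles of the same type are translationally equivalent, so there are only finitely many $\Tk$-tiles up to translation; in other words the set of prototiles of $\Tk$ (and of $\Tk - g$, which is just a translate of $\Tk$) is exactly $\Ak = \{T_1,\dots,T_\kappa\}$. Hence $T'$ above is a translate of some $T_i \in \Ak$. Letting $\e \to 0$ along the approximating sequence, the tile of $\Sk$ containing $0$ is a limit (in Hausdorff distance on supports, with fixed type label) of translates of a fixed finite collection $\{T_1,\dots,T_\kappa\}$; since each $\supp(T_i)$ is a fixed compact set and translations form a closed family, the limiting support is itself a translate of some $\supp(T_i)$, with the same type $i$. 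Thus $T$ is a translate of $T_i$, i.e.\ $T$ has prototile $T_i \in \Ak$. Since $T \in \Sk$ was arbitrary, every prototile of $\Sk$ belongs to $\Ak$; and conversely each $T_i$ occurs in $\Sk$ because it occurs in every $\Tk - g_n$ near the origin (repetitivity is not even needed here — one only needs that $\Sk$ is a genuine tiling with prototile set contained in $\Ak$, and a short compactness/convergence argument shows each $T_i$ persists in the limit, or one simply notes both tilings have exactly $\kappa$ prototile types by construction). Therefore $\Sk$ has the same collection of prototiles as $\Tk$.

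The one point requiring a little care — and the step I expect to be the main (mild) obstacle — is the passage to the limit: one must check that a Hausdorff-convergent sequence of translates $h_n + \supp(T_{i_n})$ of sets drawn from the \emph{finite} list $\{\supp(T_1),\dots,\supp(T_\kappa)\}$ has a limit that is again a translate of one of them. Finiteness lets us pass to a subsequence with constant index $i_n \equiv i$; then $h_n + \supp(T_i) \to L$ in Hausdorff distance forces $h_n$ to converge (since $\supp(T_i)$ is compact with nonempty interior, hence bounded with positive diameter, the translation vector is determined up to $o(1)$ by, say, the position of a fixed boundary point), to some $h$, and $L = h + \supp(T_i)$. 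The type labels are discrete, so they are eventually constant and pass to the limit trivially. Everything else is a direct unwinding of the metric $\widetilde d$ and of Lemma-free bookkeeping, so the proof is genuinely short; I would present it in three or four sentences along the lines above.
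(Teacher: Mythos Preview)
Your argument is correct and is exactly what the paper has in mind: the paper gives no explicit proof at all, stating just before the lemma that it ``is immediate from the definition of the metric and from the assumption that we have a finite number of prototiles.'' You have simply unpacked that sentence --- using the local rubber metric to match tiles of $\Sk$ with nearby tiles of $\Tk-g_n$, then invoking finiteness of $\Ak$ to pass to a constant-type subsequence and take a Hausdorff limit --- so the approaches coincide.

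One small remark: for the converse inclusion (that every $T_i\in\Ak$ actually occurs in $\Sk$) your parenthetical is a bit loose; the cleanest justification is that primitivity forces every prototile to appear in every sufficiently large ball of $\Tk$, hence in $\Tk-g_n$ near the origin for all large $n$, and then your same limit argument applies. In any case, for the uses of the lemma later in the paper (e.g.\ extending $\omega$ to $X_\Tk$) only the inclusion ``prototiles of $\Sk$ lie in $\Ak$'' is needed, and that is the direction you prove carefully.
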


We also have the following

\begin{lemma} \label{top-conj} Let $\Tk$ be a tiling and $\Lb (= \Lb_\Tk)$ the corresponding Delone $\kappa$-set. Then the associated dynamical systems $(X_\Tk,\R^d)$ and $(X_{\Lbs},\R^d)$ are topologically conjugate.
\end{lemma}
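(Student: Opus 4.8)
The plan is to construct an explicit homeomorphism $\Phi : X_\Tk \to X_{\Lbs}$ that intertwines the two $\R^d$-actions, using the tile-to-point correspondence from the subsection ``From a tiling to Delone $\kappa$-set.'' Recall that each prototile $T_i = (A_i,i)$ is assumed to contain the origin in the interior of its support; this gives a canonical ``control point'' for every tile, namely for a tile $g + T_i$ we take the point $(g, i) \in \R^d \times \{1,\dots,\kappa\}$. Define $\Phi(\Sk) := \Lb_{\Sk} = (\Lam_i(\Sk))_{i \le \kappa}$, where $\Lam_i(\Sk)$ is the set of translation vectors $g$ such that $g + T_i \in \Sk$. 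By Lemma \ref{lem-proto}, every $\Sk \in X_\Tk$ uses exactly the prototiles $T_1,\dots,T_\kappa$, so this is well-defined, and $\Phi(\Sk)$ is a Delone $\kappa$-set by the same argument given in the text for $\Lb_\Tk$ itself (uniform discreteness and relative density of $\supp(\Lb_{\Sk})$ follow from the fact that the $A_i$ are compact bodies and each contains a fixed ball around $0$).

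Next I would check the three properties: equivariance, bijectivity, and bicontinuity. Equivariance $\Phi \circ {\sf T}_g = {\sf T}_g \circ \Phi$ is immediate: translating a tiling by $-g$ translates every control point by $-g$. Injectivity follows because a tiling is recovered from its control-point $\kappa$-set by the formula $\Sk = \bigcup_{i=1}^\kappa (T_i + \Lam_i(\Sk))$; two tilings with the same control points are literally the same set of tiles. For surjectivity one must argue that $\Phi(X_\Tk) = X_{\Lbs}$: the image $\Phi(X_\Tk)$ is the closure (to be justified by continuity) of $\Phi(\{\Tk - g : g \in \R^d\}) = \{\Lb_\Tk - g : g\in\R^d\}$, whose closure in $X$ is by definition $X_{\Lbs}$; one also needs that $\Phi(X_\Tk)$ is closed, which follows once $\Phi$ is known to be continuous and $X_\Tk$ compact. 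Actually the cleanest route is: prove $\Phi$ continuous and injective on the compact space $X_\Tk$, hence a homeomorphism onto its image, which is compact; since the image contains the dense orbit $\{\Lb_\Tk - g\}$ and is closed, it equals $X_{\Lbs}$.

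The technical heart is the bicontinuity estimate comparing the tiling metric with the local-rubber metric on Delone $\kappa$-sets. In one direction: if two tilings $\Sk_1,\Sk_2$ agree (up to a small translation $\delta$) on a large ball $B_R(0)$, then their control-point sets agree up to a translation of size $\delta$ on a ball of radius roughly $R - \diam$, where $\diam = \max_i \diam(A_i)$ — because a control point $(g,i)$ with $g + T_i$ lying inside $B_{R-\diam}(0)$ forces the whole tile into $B_R(0)$, and a tile that is $\delta$-translated has its control point $\delta$-translated. In the other direction: if the control-point $\kappa$-sets agree up to a small translation $\delta$ on a large ball, then, since each tile is determined by its type and control point (Lemma \ref{lem-proto} again) and the supports are compact, the corresponding tiles occupy $\delta$-close regions, so the tilings are close in the Hausdorff-of-boundaries metric on a slightly smaller ball. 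Quantitatively one shows $\widetilde d(\Phi(\Sk_1),\Phi(\Sk_2)) \le C\,\widetilde d_{\mathrm{til}}(\Sk_1,\Sk_2)$ and conversely, for an appropriate constant $C$ depending only on $\diam$ and the inradius of the prototiles, which suffices for a homeomorphism (we need not get the same metric, only equivalent topologies). I expect this metric-comparison bookkeeping — keeping careful track of how the radius shrinks by $\diam$ and how the $\epsilon$-fattening parameters transform under $\Phi$ — to be the main obstacle; everything else is formal. Since the metric on the tiling space was itself \emph{defined} as the one induced from the associated Delone $\kappa$-sets, one can alternatively short-circuit most of this: under that definition, $\Phi$ restricted to a single orbit is an isometry onto its image by construction, and a surjective isometry between orbit closures of compact metric spaces extends uniquely to a homeomorphism of the closures, giving the conjugacy immediately.
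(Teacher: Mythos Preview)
Your proposal is correct and follows the same approach as the paper: the conjugacy is the map $\Sk \mapsto \Lb_{\Sk}$. The paper's proof is a single sentence calling this ``immediate,'' precisely because of the short-circuit you identify at the end --- the tiling metric was \emph{defined} as the one induced from the associated Delone $\kappa$-sets, so $\Phi$ is an isometry on the orbit and the whole bicontinuity bookkeeping you outlined in the middle is unnecessary.
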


The proof is immediate, since the map $\Sk \mapsto \Lb_{\Sk}$ (with a fixed set of prototiles as subsets of $\R^d$) provides the required conjugacy.




\section{Unique ergodicity} \label{unique-ergodicity}

In the case of Delone $\kappa$-set with FLC, it is shown in \cite{LMS1} that unique cluster frequencies (UCF) is an equivalent property for the dynamical system to be uniquely ergodic. However, without assuming FLC, the relation between the two is less clear. A simple observation shows that UCF does not imply unique ergodicity, without extra assumptions. Indeed, consider a ``random'' Delone $\kappa$-set $\Lb$ (say, on $\R$). With probability one, every patch will occur only once (up to translation) in $X$, hence it has uniform frequency equal to zero, but there is no reason why the associated dynamical system should be uniquely ergodic.

So, in addition to UCF, we impose another, technical condition, and show that together they 
imply the unique ergodicity of the dynamical system corresponding to the Delone $\kappa$-set.    

\medskip

{Let us define cylinder sets in $X_{\Lbs}$. In order to do this, we first introduce the sets $\Delta_m$, $\Delta_{m, \alpha}$, and $U_{m, \alpha}$ below.}
Let $\eta({\Lb}) > 0$ be chosen so that every ball of radius
$\frac{\eta({\Lbs})}{2}$
contains at most one point of $\supp({\Lb})$. 
For any given $m \in \N$, we consider a cube $[-2^m, 2^m)^d \subset \R^d$ and make a grid subdividing it into cubes of side length $\frac{1}{2^m}$. The small cubes are products of intervals which are half-open; every such cube can be written as $\prod_{i=1}^d [\frac{k_i}{2^m}, \frac{k_i+1}{2^m})$ for some $k_i \in [-4^m, 4^m)$.
Choose $m_0 \in \Z_{+}$ for which $\frac{1}{2^{m_0}} < \frac{\eta({\Lbs})}{2}$ so that each cube of the subdivision of side length $2^{-m}$ contains at most one point of $\supp({\Lb})$. We will assume that $m \ge m_0$. 

Let $\Delta_m$ be the set of all small cubes from this grid, with colours assigned from $1$ to $\kappa$. Formally, an element of $\Delta_m$ is $C=(B,i)$, where $B$ is a cube and $i\in \{1,\ldots,\kappa\}$. We write
$B = \supp(C)$. Observe that the cardinality of $\Delta_m$ is $|\Delta_m| = \kappa\cdot 2^{(2m+1)d}$. Next we consider the collection of subsets of $\Delta_m$, characterized by the property that each cube may be chosen at most once.
We denote these subsets by $\Delta_{m, \alpha}$, with $\alpha =1,\ldots,N_m$. Formally, their  property is that
\[\forall \ C_1, C_2 \in \Delta_{m, \alpha}, \ \supp(C_1) \cap \supp(C_2) = \emptyset. \]
For each $\Delta_{m,\alpha}$ let 
\begin{eqnarray*} 
U_{m, \alpha} = \bigl\{ \Qb & :  & \ \Qb \subset [-2^m, 2^m)^d \times \{1, \dots, \kappa \}, \  \mbox{finite set of coloured points, such that},  \\
& & \forall \ C \in \Delta_{m, \alpha} \ \mbox{we have}\ \Qb \cap C  = \{(p,i)\}, \  \mbox{for some coloured point $(p,i)$ in $\R^d$};
 \\
& & \forall \ C' \in \Delta_m \backslash \Delta_{m, \alpha} \ \mbox{we have}\ \Qb \cap C'  = \emptyset \bigr\}.
\end{eqnarray*}
In words, $U_{m,\alpha}$ is the family of $\kappa$-sets $\Qb$ of coloured points of cardinality $|\Delta_{m,\alpha}|$ such that  each coloured cube of $\Delta_{m,\alpha}$ contains exactly one point of $\Qb$ of its colour.
Let us consider all the clusters in $X_{\Lbs}$ which arise from $\Gb \cap [-2^m, 2^m)^d$ and belong to $U_{m, \alpha}$, for some $\Gb \in X_{\Lbs}$, that is,
\begin{eqnarray} \label{def-Xma}
X(U_{m, \alpha}) &:= & \{ \Gb \in X_{\Lbs}: \Gb \cap [-2^m, 2^m)^d \in U_{m, \alpha}  \} . 
\end{eqnarray}
{This is a cylinder set in $X_{\Lbs}$ that we define.}
Clearly, for any $m \in \N$, 
\bee \label{decomp1}
 X_{\Lbs} = \bigsqcup_{\alpha = 1}^{N_m} X(U_{m, \alpha}),
\eee
where $\bigsqcup$ denotes disjoint union.

Let $\Lb$ be a Delone $\kappa$-set. It has countably many clusters (finite subsets). For every $(m,\alpha$) we define 
\begin{eqnarray*}
\mathcal{G}_{m, \alpha} & = & \{ \Gbb \in U_{m,\alpha} : \exists \ x \in \R^d \ \mbox{s.t.} \ 
\Gbb - x \in \Lb   \}. 
\end{eqnarray*}
We identify clusters in $\mathcal{G}_{m, \alpha}$ that are translationally equivalent and choose one representative from each equivalence class;  denote the resulting set of clusters by $(\mathcal{G}_{m, \alpha})_{_{\equiv}}$.
 Enumerate elements of this set arbitrarily and write
 $$(\mathcal{G}_{m, \alpha})_{_{\equiv}} = \{\Gbb^{(m,\alpha)}_1, \Gbb^{(m,\alpha)}_2, \Gbb^{(m,\alpha)}_3, \dots \}.
 $$
 For each cluster $\Gbb$ of $\Lb$ and $V \subset \R^d$, consider the cylinder set defined in the usual way:
\[ X(\Gbb, V) = \{ \Gb \in X_{\Lbs} : \exists \ t \in V \ \mbox{s.t.}  -t+ \Gbb  \subset \Gb \}. \]
Recall that for $V \subset \R^d$, we use the notation
\[ \Gbb \textcircled{+} V = \{ \Gbb + t  \ : \ t \in V \}. \]
For every $j\ge 1$ and $\Gbb^{(m,\alpha)}_j \in (\mathcal{G}_{m, \alpha})_{_{\equiv}}$, there exists a Borel set $V^{(m,\alpha)}_j  \subset \R^d$ such that 
\begin{eqnarray} \label{choice-of-x1}
\Gbb_j^{(m,\alpha)} \textcircled{+} V_j^{(m,\alpha)}& \subset & U_{m, \alpha} \ \ \ \mbox{but}  \nonumber\\
 \Gbb_j^{(m,\alpha)} \textcircled{+} {V}' & \not\subset & U_{m, \alpha} \ \ \ \mbox{for any ${V}' \supsetneq V_j^{(m,\alpha)}$}. 
\end{eqnarray}
Indeed, by definition,  $\Gbb^{(m,\alpha)}_j \in U_{m,\alpha}$. Then $V^{(m,\alpha)}_j$ is the maximal set of all possible ``wiggle vectors $x$" such that all the points of $-x+\Gbb_j^{(m,\alpha)}$ remain inside their small grid boxes. 
This way, we get for all $(m,\alpha)$,
\bee \label{decomp2}
 X(U_{m, \alpha}) = \bigsqcup_{j=1}^{\infty} X(\Gbb^{(m,\alpha)}_j, V^{(m,\alpha)}_j) \ \bigsqcup \wtil{X}_{m,\alpha},
\eee
where
\bee
 \wtil{X}_{m,\alpha}:= \bigl\{\Gb \in X_{\Lbs} : \Gb \cap [-2^m, 2^m)^d \in U_{m, \alpha}, \nexists \ x \in \R^d \ \mbox{s.t.} \ \bigl(\Gb \cap [-2^m, 2^m)^d \bigr) + x \subset \Lb \bigr\}
\eee
is the set of coloured Delone $\kappa$-sets whose clusters in $[-2^m,2^m)^d$ are only ``admitted in the limit'',
according to the terminology of \cite{FraSa2}. Observe that, by construction, choosing $z=(z_i)_{i=1}^d$ in $\Gbb_j^{(m,\alpha)}\cap \prod_{i=1}^d [\frac{k_i}{2^m}, \frac{k_i+1}{2^m})$ for some $k_i \in [-4^m, 4^m) \cap \Z$ (ignoring the colour), 
we have 
$$
V_j^{(m,\alpha)}\subset \prod_{i=1}^d \bigl[-z_i + k_i \cdot 2^{-m}, -z_i + k_i \cdot 2^{-m} + 2^{-m}\bigr),
$$
hence
\bee \label{Vbound}
\Vol(V_{j}^{(m,\alpha)}) \le 2^{-md}\ \ \mbox{for all}\ m,\alpha,j.
\eee


The following is a standard result in topological dynamics; see e.g.\ \cite[6.19]{Walters} for the proof in the case of $\Z$-actions; the case of $\R^d$-actions is proved the same way --- see e.g.\ \cite[Thm.\,2.6]{LMS1} for a short proof of the implication needed below.


\begin{theorem} \label{th-UE}
Let $(X,T_g)_{g\in \R^d}$ be a continuous $\R^d$-action on a compact metric space and let $\{F_n\}_{n\ge 1}$ be a van Hove sequence in $\R^d$. This system is uniquely ergodic if and only if for all continuous functions
$f:X\to \C$ (notation $f\in \Ck(X)$),
\bee \label{unconv}
I_n(x,f): = \frac{1}{\mbox{\em Vol}(F_n)} \int_{F_n} f(T_g(x))\, dg \ \longrightarrow \ C_f, \ \mbox{as} \ n \to \infty,
\eee
where $C_f$ is a constant depending on $f$, for all $x\in X$, in which case the convergence is necessarily uniform in $x\in X$.
\end{theorem}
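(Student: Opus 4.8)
The statement to be proved is the classical characterization of unique ergodicity for a continuous $\R^d$-action on a compact metric space in terms of uniform convergence of van Hove averages. The plan is to follow the standard argument, using the Riesz representation theorem and the Banach--Alaoglu / weak-$*$ compactness of the space of Borel probability measures, adapted from the $\Z$-action case to $\R^d$-actions.

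\textit{The ``if'' direction.} Suppose the limit in \eqref{unconv} exists for every $f\in\Ck(X)$ and every $x\in X$. First I would note that $f\mapsto C_f$ is a bounded positive linear functional on $\Ck(X)$ with $C_1=1$, so by the Riesz representation theorem there is a Borel probability measure $\mu$ with $C_f=\int_X f\,d\mu$. Next I would check that $\mu$ is $\R^d$-invariant: for fixed $h\in\R^d$, the van Hove property of $\{F_n\}$ forces $\frac{1}{\Vol(F_n)}\int_{F_n} f(T_{g+h}x)\,dg - \frac{1}{\Vol(F_n)}\int_{F_n} f(T_g x)\,dg \to 0$, since the symmetric difference $(h+F_n)\triangle F_n$ has volume $o(\Vol(F_n))$; hence $\int f\circ T_h\,d\mu = C_{f\circ T_h} = C_f = \int f\,d\mu$. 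Finally, uniqueness: if $\nu$ is any invariant Borel probability measure, then integrating \eqref{unconv} in $x$ against $\nu$ and using the dominated convergence theorem together with invariance of $\nu$ gives $C_f = \int_X I_n(x,f)\,d\nu(x) \to C_f$ on the one hand and $\int_X I_n(x,f)\,d\nu(x) = \int_X f\,d\nu$ for every $n$ on the other, so $\int f\,d\nu = C_f = \int f\,d\mu$ for all $f\in\Ck(X)$, whence $\nu=\mu$. This shows the system is uniquely ergodic.

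\textit{The ``only if'' direction, including uniform convergence.} Suppose the system is uniquely ergodic with invariant measure $\mu$. I would argue by contradiction: if \eqref{unconv} fails to hold with $C_f=\int f\,d\mu$ uniformly in $x$ for some $f\in\Ck(X)$, then there are $\e>0$, a sequence $n_k\to\infty$, and points $x_k\in X$ with $|I_{n_k}(x_k,f) - \int f\,d\mu|\ge\e$. Define functionals $L_k(g) := I_{n_k}(x_k,g)$ on $\Ck(X)$; these are positive, linear, and norm-one, hence correspond to Borel probability measures $\mu_k$, and by weak-$*$ compactness of the space of probability measures on the compact metric space $X$ we may pass to a further subsequence along which $\mu_k \to \nu$ weak-$*$ for some probability measure $\nu$. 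The van Hove property again shows $\nu$ is $\R^d$-invariant (the averaging smooths out the translation by any fixed $h$, exactly as above), so by unique ergodicity $\nu=\mu$, contradicting $|\int f\,d\mu_k - \int f\,d\mu|\ge\e$. Therefore the convergence holds uniformly; in particular it holds pointwise with the constant $C_f=\int f\,d\mu$, which is what \eqref{unconv} asserts.

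\textit{Main obstacle.} The only nonroutine point is the verification that the limiting functional / measure is genuinely $\R^d$-invariant, and this is precisely where the van Hove hypothesis on $\{F_n\}$ is used: one needs $\Vol\bigl((h+F_n)\triangle F_n\bigr)=o(\Vol(F_n))$ for every fixed $h\in\R^d$, which follows from \eqref{Hove} since $(h+F_n)\triangle F_n \subset (\partial F_n)^{+|h|}$ up to a set contributed by the boundary layer. Everything else is a transcription of the $\Z$-action proof in \cite[6.19]{Walters}, and indeed a short self-contained write-up of the implication actually needed appears in \cite[Thm.\,2.6]{LMS1}, so I would simply cite these and record the van Hove computation as the one extra ingredient for the $\R^d$ setting.
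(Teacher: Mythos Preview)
Your proposal is correct and is precisely the standard argument the paper has in mind: the paper does not give its own proof of this theorem but simply cites \cite[6.19]{Walters} for the $\Z$-action case and \cite[Thm.\,2.6]{LMS1} for the $\R^d$ implication actually used, and your sketch reproduces exactly that route (Riesz representation, weak-$*$ compactness, van Hove invariance check), even down to those same citations.
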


Now we are ready to state our first result.

\begin{theorem} \label{th-UE2}
Let $\Lb$ be a Delone $\kappa$-set and let $\{F_n\}_{n \ge 1}$ be a van Hove sequence. 
Suppose that $\Lb$ has UCF, and in addition,  for all $m\ge m_0$ and all $\alpha = 1,\ldots, N_m$ we have
 \bee \label{lemeq}
 \lim_{n\to \infty} \sum_{j=1}^{\infty} \frac{\Vol(V^{(m,\alpha)}_j) L_{\Gbbs_j^{(m,\alpha)} }(h + F_n) }{\Vol(F_n)} = \sum_{j=1}^{\infty} \Vol(V^{(m,\alpha)}_j) \cdot \freq(\Gbb_j^{(m,\alpha)}, \Lb),
 \eee
 uniformly in $h\in \R^d$.  
Then $(X_{\Lbs}, \R^d)$ is uniquely ergodic.
\end{theorem}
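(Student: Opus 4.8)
The plan is to verify the criterion of Theorem~\ref{th-UE} by showing that the ergodic averages $I_n(x,f)$ converge uniformly for every $f \in \Ck(X_{\Lbs})$. First I would reduce to a convenient dense class of test functions. Since the cylinder sets $X(U_{m,\alpha})$ and $X(\Gbb^{(m,\alpha)}_j, V^{(m,\alpha)}_j)$ generate the topology and the Borel $\sigma$-algebra of $X_{\Lbs}$ (as set up in this section), and indicator-type functions supported on small such cylinders can be uniformly approximated by continuous functions, it suffices to prove convergence of $I_n(x,f)$ for $f$ continuous and supported, up to small error, ``near'' a single cylinder $X(\Gbb^{(m,\alpha)}_j, V^{(m,\alpha)}_j)$; equivalently, to show that the normalized amount of time the orbit of $x$ spends in each such cylinder converges uniformly. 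So the real content is: for each fixed $(m,\alpha)$ and each $j$,
\[
\frac{1}{\Vol(F_n)} \int_{F_n} \mathbf{1}_{X(\Gbb^{(m,\alpha)}_j, V^{(m,\alpha)}_j)}({\sf T}_g x)\, dg \ \longrightarrow \ \Vol(V^{(m,\alpha)}_j)\cdot \freq(\Gbb^{(m,\alpha)}_j,\Lb)
\]
uniformly in $x$, and then sum over $j$.

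Next I would identify this integral combinatorially. For $x = \Lb - h$ (the dense orbit; the general $x \in X_{\Lbs}$ is handled by continuity/limit and the uniformity), the orbit point ${\sf T}_g x = \Lb - h - g$ lies in $X(\Gbb^{(m,\alpha)}_j, V^{(m,\alpha)}_j)$ precisely when there is a translate of $\Gbb^{(m,\alpha)}_j$ sitting inside $\Lb$ whose ``wiggle offset'' relative to $h+g$ falls in $V^{(m,\alpha)}_j$. Thus the time integral over $F_n$ counts occurrences $g'$ of $\Gbb^{(m,\alpha)}_j$ in $\Lb$ with $g' \in h + F_n$ (up to van Hove boundary corrections, controlled by \eqref{Hove}), each weighted by $\Vol(V^{(m,\alpha)}_j)$ — the Lebesgue measure of the allowed wiggle region, which by construction \eqref{choice-of-x1} is the same translate-independent set for every occurrence. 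This gives
\[
\frac{1}{\Vol(F_n)} \int_{F_n} \mathbf{1}_{X(\Gbb^{(m,\alpha)}_j, V^{(m,\alpha)}_j)}({\sf T}_g x)\, dg \;=\; \frac{\Vol(V^{(m,\alpha)}_j)\, L_{\Gbbs^{(m,\alpha)}_j}(h+F_n)}{\Vol(F_n)} \;+\; o(1),
\]
with the $o(1)$ uniform in $h$ by the van Hove property and uniform discreteness. By UCF each individual term converges uniformly to $\Vol(V^{(m,\alpha)}_j)\freq(\Gbb^{(m,\alpha)}_j,\Lb)$.

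The remaining, and I expect main, obstacle is the interchange of the limit $n\to\infty$ with the infinite sum over $j$: there are countably many clusters $\Gbb^{(m,\alpha)}_j$, and while each term converges uniformly, the tail $\sum_{j>J}$ need not be uniformly small a priori — this is exactly where UCF alone fails (cf.\ the ``random Delone set'' remark) and where hypothesis \eqref{lemeq} is invoked. So I would argue: by \eqref{decomp1} and \eqref{decomp2}, $\sum_{j=1}^\infty \Vol(V^{(m,\alpha)}_j)\, L_{\Gbbs^{(m,\alpha)}_j}(h+F_n)/\Vol(F_n)$ is, up to van Hove error and the contribution of $\wtil X_{m,\alpha}$, a lower bound for $\Vol(F_n)^{-1}$ times the time spent in $X(U_{m,\alpha})$, hence bounded; \eqref{Vbound} and monotone convergence give that the partial sums converge. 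Then \eqref{lemeq} is precisely the statement that the limit of the sums equals the sum of the limits, so combining it with the uniform convergence of each term (for the finitely many leading terms) and the uniform smallness of the tails forced by \eqref{lemeq}, one gets
\[
\frac{1}{\Vol(F_n)}\int_{F_n}\mathbf{1}_{X(U_{m,\alpha})}({\sf T}_g x)\,dg \ \longrightarrow\ \sum_{j=1}^\infty \Vol(V^{(m,\alpha)}_j)\,\freq(\Gbb^{(m,\alpha)}_j,\Lb)
\]
uniformly in $x$, the ``admitted in the limit'' part $\wtil X_{m,\alpha}$ contributing zero time (its orbit sections are not translates of clusters of $\Lb$, so they are hit on a set of $g$ of density zero — again by UCF applied to approximating clusters). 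Finally, letting $m\to\infty$, the cylinders $X(\Gbb^{(m,\alpha)}_j,V^{(m,\alpha)}_j)$ refine to a basis, so the convergence of these ergodic averages upgrades to convergence of $I_n(x,f)$ for all $f\in\Ck(X_{\Lbs})$, and Theorem~\ref{th-UE} yields unique ergodicity of $(X_{\Lbs},\R^d)$. \qed
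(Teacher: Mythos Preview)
Your plan is essentially the paper's proof: invoke Theorem~\ref{th-UE}, reduce to the orbit $\{-h+\Lb\}$, approximate $f\in\Ck(X_{\Lbs})$ by step functions on the cylinders, identify the ergodic average of the indicator combinatorially with $\sum_j \Vol(V_j)L_{\Gbbs_j}(h+F_n)/\Vol(F_n)$ up to a van Hove boundary error, and then invoke \eqref{lemeq}.

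Two places where the paper is cleaner than your sketch. First, the reduction step: the paper uses only the \emph{finite} partition $X_{\Lbs}=\bigsqcup_{\alpha=1}^{N_m} X(U_{m,\alpha})$, whose pieces have diameter $\to 0$ as $m\to\infty$; hence every continuous $f$ is uniformly close to a \emph{finite} linear combination of the $\Chi_{m,\alpha}$, and it suffices to prove \eqref{unconv2} for each $\Chi_{m,\alpha}$. Your detour through the individual cylinders $X(\Gbb_j,V_j)$ and then summing over $j$ is unnecessary and forces you to worry about summing infinitely many $o(1)$ boundary errors; the paper does a single boundary estimate at the $X(U_{m,\alpha})$ level (see \eqref{claim2}--\eqref{claim22}). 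Second, on the orbit of $\Lb$ the set $\wtil X_{m,\alpha}$ is simply never visited (those clusters do not occur in $\Lb$), so its ``zero time'' needs no argument at all; the paper notes this in one line. With those two simplifications your outline becomes the paper's proof.
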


\begin{proof} We are going to use the sufficient condition from Theorem~\ref{th-UE}. Since $X_{\Lbs}$ is the orbit closure of $\Lb$ and continuous functions on the compact space $X_{\Lbs}$ are uniformly continuous, it is enough to verify (\ref{unconv}) for $x = -h+\Lb$, uniformly in $h\in \R^d$. Next observe that,  in view of the decomposition (\ref{decomp1})  of $X_{\Lbs}$ for each $m\ge m_0$ into a finite disjoint union of Borel subsets whose diameters tend to zero as $m\to \infty$, we can approximate any $f\in \Ck(\Lb)$ by linear combinations of characteristic functions of $X(U_{m,\alpha})$. Thus it is enough to show that for all $(m,\alpha)$,
\bee \label{unconv2}
I_n(-h+\Lb,\Chi_{m,\alpha}) \to  C_{m,\alpha}, \ \mbox{as} \ n \to \infty,
\eee
uniformly in $h\in \R^d$, where $\Chi_{m,\alpha}$ is the characteristic function of $X(U_{m,\alpha})$. 

Fix $m\ge m_0$ and $\alpha\in \{1,\ldots,N_m\}$. To simplify the notation,
we will drop the superscript $(m,\alpha)$ for the rest of the proof, writing $\Gbb_j:= \Gbb^{(m,\alpha)}_j,\ V_j:= V^{(m,\alpha)}_j$, etc.

Now,  observe that for any $h \in \R^d$,
\begin{eqnarray*}
J_n(h, \Chi_{m,\alpha}) & := & \int_{F_n} \Chi_{m,\alpha}(-x-h+\Lb)\, dx \\
& = & \Vol \bigl\{ x \in F_n : -x -h + \Lb \in X(U_{m, \alpha})\bigr\} \\
& = & \Vol \bigl\{ x \in h + F_n : -x + \Lb \in X(U_{m, \alpha})\bigr\} \\
& = & \Vol \Bigl\{\bigsqcup_{j=1}^{\infty} \bigsqcup_{\nu} \bigl((h+F_n) \cap (x_{j,\nu} - V_j)\bigr)\Bigr\},
\end{eqnarray*} 
where $x_{j,\nu}$ are all the vectors such that $x_{j,\nu} + \Gbb_j \subset \Lb$. 
In fact, 
\begin{eqnarray*}
\lefteqn{-x + \Lb \in X(U_{m, \alpha}) } \\ 
& \Longleftrightarrow & -x + \Lb \in X(\Gbb_j, V_j) = \{\Gb \in X_{\Lbs} : \exists \ t \in V_j \ \mbox{s.t.} \ t+\Gbb_j  \subset \Gb  \} \\
& & \ \ \ \ \ \ \ \ \ \ \ \  \mbox{for some $ X(\Gbb_j, V_j)  \subset X(U_{m, \alpha})$} \\
& \Longleftrightarrow  &  \exists \ t \in V_j \ \mbox{s.t.} \ t+ \Gbb_j  \subset -x + \Lb \\
& \Longleftrightarrow  &  \exists \ t \in V_j \ \mbox{s.t.} \ (x + t) + \Gbb_j \subset  \Lb \\
& \Longleftrightarrow  &  \exists \ t \in V_j \ \mbox{s.t.} \ x + t = x_{j,\nu}, \ \mbox{where} \ x_{j,\nu} +  \Gbb_j \subset  \Lb.
\end{eqnarray*}  
Observe that the set $\wtil{X}_{m,\alpha}$ from (\ref{decomp2}), which has patches admissible in the limit, does not enter into consideration, since we are integrating over the orbit of $\Lb$.

Recall that every $m$-grid box contains at most one point of $\supp(\Lb)$, hence for $-x+\Lb\in X(U_{m,\alpha})$ there is a unique $j\in \N$ and a unique  $t\in V_j$ such that
$(x-t)+\Gbb_j \subset \Lb$. Thus
the sets $x_{j,\nu} + V_j$ are disjoint for $\alpha = 1,\ldots, N_m$ and $j\ge 1$.
Note that $2^{m+1} \ge \max\{\diam(\Gbb_j):\, j \in \N \}$ (we are using the $\ell^\infty$ metric in $\R^d$).  Let $r = \eta(\Lb) + 2^{m+1}$. Then we obtain
$$
\sum_{j=1}^{\infty} \Vol(V_j) L_{\Gbbs_j} (h + {(F_n)}^{-r})
 \le J_n(h, \Chi_{m,\alpha}) \le \sum_{j=1}^{\infty} \Vol(V_j) L_{\Gbbs_j} (h + {(F_n)}^{+r}) \,, 
 $$
 where $F^{\pm}$ are defined in (\ref{nota1}).
In view of (\ref{Vbound}), we deduce
 \begin{eqnarray} & &  \sum_{j=1}^{\infty} \Vol(V_j) (L_{\Gbbs_j} (h + {(F_n)}^{+r}) - L_{\Gbbs_j} (h + {(F_n)}^{-r})) \nonumber \\
&  \le &   \ \ \sum_{j=1}^{\infty} 2^{-md} \cdot L_{\Gbbs_j} (h + (\partial{F_n})^{+2r}) \label{claim2} \,. 
\end{eqnarray}
Notice that  the infinite sums above have finitely many non-zero terms, since there are finitely many different clusters in a finite domain, hence there is no problem with convergence.
 
It is clear that for any $(m,\alpha)$, the total number of $\Lb$-clusters from $U_{m,\alpha}$ contained in a set $A$ is not greater than the cardinality of $\supp(\Lb)\cap A$.
 It follows that for any bounded Borel set $A$ we have
 $$
 \sum_{j=1}^\infty L_{\Gbbs_j}(A) \le \frac{\Vol(A^{+2^{-m}})}{2^{-md}}.
 $$
 Thus
 \bee \label{claim22}  \sum_{j=1}^{\infty} L_{\Gbbs_j} (h + (\partial F_n)^{+2r}) \le  2^{md}\cdot \Vol((\partial F_n)^{+(2r+2^{-m})}) \,.\eee
Since $\{F_n\}_{n \ge 1}$ is a van Hove sequence, we conclude that
 \bee \label{claim3}
 \lim_{n \to \infty} \left(\frac{J_n(h, \Chi_{m,\alpha})}{\Vol{(F_n)}}- \sum_{j=1}^{\infty} \frac{\Vol(V_j) L_{\Gbbs_j} (h + F_n) }{\Vol(F_n)}\right) = 0,
 \eee
 uniformly in $h\in \R^d$.
  Denote the right-hand side of (\ref{lemeq}) by $C_{m,\alpha}$. 
The equation  (\ref{lemeq}), together with (\ref{claim3}), shows the desired convergence of $I_n(-h +\Lb,\Chi_{m,\alpha}) = J_n(h, \Chi_{m,\alpha})/\Vol(F_n)$, uniform in $h\in \R^d$, as $n\to \infty$.  \end{proof}

The following sufficient condition for unique ergodicity will be useful in the next section.

\begin{cor} \label{cor-tech}
Let $\Lb$ be a Delone $\kappa$-set and let $\{F_n\}_{n \ge 1}$ be a van Hove sequence. 
Suppose that $\Lb$ has UCF, and in addition,  for all $m\ge m_0$ and all $\alpha = 1,\ldots, N_m$ we have
\be \label{tech_cond}
\forall\,\eps>0,\exists\, k_0\in \N, \forall\, n\ge k_0,\forall\,h\in \R^d, \ \sum_{j=k_0}^\infty \frac{\Vol(V^{(m,\alpha)}_j) L_{\Gbbs_j^{(m,\alpha)} }(h + F_n) }{\Vol(F_n)} \le \eps.
\ee
Then $(X_{\Lbs},\R^d)$ is uniquely ergodic.
\end{cor}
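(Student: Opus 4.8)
The plan is to derive Corollary~\ref{cor-tech} from Theorem~\ref{th-UE2} by showing that the tail-smallness condition \eqref{tech_cond}, together with UCF, forces the limit identity \eqref{lemeq}. So I would fix $m\ge m_0$ and $\alpha\in\{1,\dots,N_m\}$, drop the superscripts as in the proof of Theorem~\ref{th-UE2}, and split each partial sum at the cutoff $k_0=k_0(\eps)$ provided by \eqref{tech_cond}:
\[
\sum_{j=1}^{\infty} \frac{\Vol(V_j) L_{\Gbbs_j}(h+F_n)}{\Vol(F_n)} = \sum_{j=1}^{k_0-1} \frac{\Vol(V_j) L_{\Gbbs_j}(h+F_n)}{\Vol(F_n)} + \sum_{j=k_0}^{\infty} \frac{\Vol(V_j) L_{\Gbbs_j}(h+F_n)}{\Vol(F_n)}.
\]
The first (finite) sum is handled by UCF: for each fixed $j<k_0$ we have $L_{\Gbbs_j}(h+F_n)/\Vol(F_n)\to \freq(\Gbb_j,\Lb)$ uniformly in $h$ as $n\to\infty$, and a finite sum of uniformly convergent sequences converges uniformly, so $\sum_{j=1}^{k_0-1}\Vol(V_j)L_{\Gbbs_j}(h+F_n)/\Vol(F_n)\to \sum_{j=1}^{k_0-1}\Vol(V_j)\freq(\Gbb_j,\Lb)$ uniformly in $h$. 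The second sum is bounded by $\eps$ for all $n\ge k_0$ and all $h$, by hypothesis \eqref{tech_cond}.

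Next I would control the tail of the right-hand side of \eqref{lemeq}. Applying \eqref{tech_cond} with $h=0$ (say), and passing to the liminf in $n$, each frequency $\freq(\Gbb_j,\Lb)=\lim_n L_{\Gbbs_j}(F_n)/\Vol(F_n)$ satisfies $\sum_{j=k_0}^{\infty}\Vol(V_j)\freq(\Gbb_j,\Lb)\le\eps$ by Fatou (the terms are nonnegative), so the series $\sum_{j=1}^{\infty}\Vol(V_j)\freq(\Gbb_j,\Lb)$ converges and its tail beyond $k_0$ is at most $\eps$. Combining the three estimates: for $n$ large enough (so that the finite sum is within $\eps$ of its limit),
\[
\Bigl| \sum_{j=1}^{\infty} \frac{\Vol(V_j) L_{\Gbbs_j}(h+F_n)}{\Vol(F_n)} - \sum_{j=1}^{\infty} \Vol(V_j)\freq(\Gbb_j,\Lb) \Bigr| \le \eps + \eps + \eps = 3\eps,
\]
uniformly in $h\in\R^d$. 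Since $\eps>0$ is arbitrary, this is exactly \eqref{lemeq}, and Theorem~\ref{th-UE2} gives unique ergodicity.

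The only genuinely delicate point I anticipate is making the ``$n$ large enough'' threshold independent of $h$: it must be, since the finite sum $\sum_{j=1}^{k_0-1}$ converges uniformly in $h$ (finitely many uniformly convergent terms), and $k_0$ itself was chosen independent of $h$ and $n$ in \eqref{tech_cond}. So there is no hidden dependence, but I would state this carefully. A minor bookkeeping subtlety is that $k_0$ depends on $\eps$ but not on $(m,\alpha)$ only if we first fix $(m,\alpha)$ — which is fine, since \eqref{lemeq} is to be verified for each $(m,\alpha)$ separately. Everything else is routine, so this corollary is essentially a packaging of Theorem~\ref{th-UE2} with a uniform-tail hypothesis replacing the precise limit exchange.
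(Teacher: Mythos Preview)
Your proof is correct and follows essentially the same route as the paper: reduce to verifying \eqref{lemeq}, split the sum at the cutoff $k_0$ from \eqref{tech_cond}, handle the finite head by UCF (uniformly in $h$), and bound the tail by $\eps$. The only cosmetic difference is that the paper argues via $\liminf\ge$ and $\limsup\le$ separately (so it never needs to bound the tail of the frequency series explicitly), whereas you do a symmetric $3\eps$ estimate and therefore add the easy Fatou-type step $\sum_{j\ge k_0}\Vol(V_j)\freq(\Gbb_j,\Lb)\le\eps$; both are equally valid.
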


\begin{proof}
 We only need to check the equality (\ref{lemeq}). Observe that if we replace the infinite sum by any partial sum in (\ref{lemeq}), convergence claimed holds  by the UCF property. Therefore,
 \be \label{tech2}
\liminf_{n\to \infty,\ h\in \R^d} \sum_{j=1}^{\infty} \frac{\Vol(V^{(m,\alpha)}_j) L_{\Gbbs_j^{(m,\alpha)} }(h + F_n) }{\Vol(F_n)} \ge \sum_{j=1}^{\infty} \Vol(V^{(m,\alpha)}_j) \cdot \freq(\Gbb_j^{(m,\alpha)}, \Lb),
\ee
where $h$ is allowed to vary with $n$.
Thus it remains to estimate the $\limsup$ from above. Given $\eps>0$, find the  corresponding $k_0$ from (\ref{tech_cond}). 
 There exists $n_0$ such that for all $n\ge n_0$ and all $h\in \R^d$,
 $$
 \left|\sum_{j=1}^{k_0-1} \frac{\Vol(V^{(m,\alpha)}_j) L_{\Gbbs_j^{(m,\alpha)} }(h + F_n) }{\Vol(F_n)} - \sum_{j=1}^{k_0-1} \Vol(V^{(m,\alpha)}_j) \cdot \freq(\Gbb_j^{(m,\alpha)}, \Lb)\right| < \eps.
 $$
 Then for $n\ge \max\{n_0,k_0\}$, for all $h\in \R^d$,  we  have by the above, in view of (\ref{tech_cond}),
$$
 \sum_{j=1}^{\infty} \frac{\Vol(V^{(m,\alpha)}_j) L_{\Gbbs_j^{(m,\alpha)} }(h + F_n) }{\Vol(F_n)}  \le  \sum_{j=1}^{\infty} \Vol(V^{(m,\alpha)}_j) \cdot \freq(\Gbb_j^{(m,\alpha)}, \Lb) + 2\eps,
$$
and  the desired claim follows.
 \end{proof}


\section{Substitution Delone $\kappa$-sets and tilings} \label{substitution-tilings}

\subsection{Preliminaries}

We say that a linear map $Q:\ \R^d \to \R^d$ is {\em expansive} if there is a $c > 1$ with 
\[ \varrho(Qx, Qy) \ge c \cdot \varrho(x, y) \]
for all $x, y \in \R^d$ and some metric $\varrho$ on $\R^d$ equivalent to the Euclidean metric. Another way to define an expansive linear map is to say that all its eigenvalues (real and complex) have modulus strictly greater than 1.

\begin{defi} \label{def-subst-mul}
{\em $\Lb = (\Lam_i)_{i\le \kappa}$ is called a {\em
substitution Delone $\kappa$-set} if $\Lb$ is a Delone $\kappa$-set and
there exist an expansive map
$Q:\, \R^d\to \R^d$ and finite sets $\Dk_{ij}$ for $i,j\le \kappa $ such that
\be \label{eq-sub}
\Lambda_i = \bigcup_{j=1}^{\kappa} (Q \Lambda_j + \Dk_{ij}),\ \ \ i \le \kappa,
\ee
where the unions on the right-hand side are disjoint. 
}
\end{defi}

\begin{defi}\label{def-subst}
{\em Let $\Ak = \{T_1,\ldots,T_{\kappa} \}$ be a finite set of prototiles in $\R^d$, with $T_i=(A_i,i)$.
Denote by $\Pk_{\Ak}$ the set of
patches whose every tile is a translate of one of $T_i$'s.
We say that $\omega: \Ak \to \Pk_{\Ak}$ is a {\em tile-substitution} (or simply
{\em substitution}) with
expansive map $Q$ if there exist finite sets $\Dk_{ij}\subset \R^d$ for
$i,j \le \kappa$, such that
\begin{equation}
\om(T_j)=
\{u+T_i:\ u\in \Dk_{ij},\ i=1,\ldots,\kappa \}
\label{subdiv}
\end{equation}
with
\be \label{eq-til} 
Q A_j = \bigcup_{i=1}^{\kappa} (\Dk_{ij}+A_i) \ \ \  \mbox{for} \  j \le \kappa.
\ee
Here all sets in the right-hand side must have disjoint interiors;
it is possible for some of the $\Dk_{ij}$ to be empty.  
The {\em substitution $\kappa\times\kappa$ matrix} $\Sf$ is defined by $\Sf(i,j) = \# \Dk_{ij}$. } 
\end{defi}

The tile-substitution is extended to translated prototiles by 
\be \label{transl}
\om(T_j - x) = \om(T_j) - Qx.
\ee 
The equations (\ref{subdiv})-(\ref{transl}) allow us to extend $\om$ to patches in $\Pk_\Ak$ by
$\om(P) = \bigcup_{T\in P} \om(T)$. It is similarly extended to tilings all of whose tiles are translates of the prototiles from $\Ak$. A tiling $\Tk$ satisfying $\om(\Tk) = \Tk$ is called a
{\em fixed point of the tile-substitution}, or a {\em substitution tiling with expansion map $Q$}. 

We avoid the term ``self-affine tiling,'' since these are usually assumed to have FLC.
{A substitution tiling $\Tk$ is said to be {\em primitive}, if there is an $\ell > 0$ for which $\Sf^{\ell}$ has no zero entries, where $\Sf$ is the substitution matrix.

It is easy to see that one can always find a periodic point for $\om$, i.e.,\ a tiling $\Tk$ such that $\om^N(\Tk) = \Tk$ for some $N \ge 1$.
Indeed, since the prototile set is finite and the map $Q$ is expansive, we can find $T_j\in \Ak$ and $N\ge 1$ such that $\om^N(T_j)$ contains a translated copy of $T_j$ in the interior of its support.
This means that there exists a vector $x\in \Int(\supp(T_j))$ such that $T_j-x\in \om^N(T_j-x)$. Then the sequence of patches $\{\om^{Nk}(T_j-x)\}_{k\ge 1}$ is increasing, in the sense that each one contains the previous one as a sub-patch, and covers the entire space in the limit. This limit is the desired periodic point $\Tk$.
We can then replace $\om$ with $\om^N$ and obtain a tiling $\Tk$ that is a fixed point of a tile-substitution.

\medskip

Comparing the systems of equations (\ref{eq-sub}) and (\ref{eq-til}), we can see that there is a kind of ``duality'' between them. Following \cite{lawa}, we call the system (\ref{eq-til}) the {\em adjoint system} of equations for (\ref{eq-sub}), where we use the same expansion map $Q$ and the same  sets of translation vectors $\Dk_{ij}$. By the theory of graph-directed iterated function systems \cite{MauWi} (see also \cite{BM1}), it follows that the system of equations (\ref{eq-til}) 
has a unique solution for which $\{A_1, \dots, A_{\kappa} \}$ is a
family of non-empty compact subsets of $\R^d$.

The following result  of J. Lagarias and Y. Wang will be important for us; it is a special case of \cite[Th.\,2.3 and Th.\,5.5]{lawa}.
We emphasize that in their paper the FLC condition is not assumed.

\begin{theorem}[Lagarias and Wang \cite{lawa}] \label{th-lawa}
Suppose that $\{A_1, \dots, A_{\kappa} \}$ is the solution of the adjoint system of equations (\ref{eq-til}) associated with a primitive substitution Delone $\kappa$-set. Let $Q$ be the expansion map of the substitution and $\Sf$ the substitution matrix. Then

{\rm (i)} the Perron-Frobenius eigenvalue of the matrix $\Sf$ equals $|\det(Q)|$;

{\rm (ii)} all the sets $A_i$ have non-empty interiors and each $A_i$ is the closure
of its interior;

{\rm (iii)} for all $1 \le i \le \kappa$, $\Vol(\partial A_i) = 0$.
\end{theorem}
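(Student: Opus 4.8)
The plan is to reduce (i)--(iii) to measure-theoretic bookkeeping on the adjoint system (\ref{eq-til}), using the Perron--Frobenius theorem together with the companion substitution tiling.

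\emph{The tiling and part of (ii).} First I would observe that the data $(Q,\{\Dk_{ij}\})$ define a tile-substitution $\om$ with prototiles $T_i=(A_i,i)$, and that, after replacing $\om$ by a power if necessary (this replaces $\Sf,Q$ by $\Sf^N,Q^N$, leaves the $A_i$ unchanged, and affects none of (i)--(iii)), $\om$ has a fixed-point tiling $\Tk$ of $\R^d$ whose tiles are translates of $A_1,\dots,A_\kappa$ with pairwise disjoint interiors, exactly as in the discussion preceding the theorem. Since $\R^d=\bigcup_{T\in\Tk}\supp(T)$ is a countable union of compact sets, the Baire category theorem produces a tile with nonempty interior, so some $A_{i_0}$ has nonempty interior; by primitivity of $\Sf$, for every $j$ there is a chain along which the relevant $\Dk$'s are nonempty, so $A_j$ contains a homeomorphic copy of $Q^{-\ell}A_{i_0}$ and therefore also has nonempty interior. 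Rewriting (\ref{eq-til}) as $A_j=\bigcup_i\bigcup_{d\in\Dk_{ij}}Q^{-1}(d+A_i)$ and iterating: each $x\in A_j$ lies, for every $n$, in a descendant piece $P_n=Q^{-n}(z_n+A_{i_n})$ with $\diam(P_n)\to 0$ and $\es\ne\Int(P_n)\subset\Int(A_j)$, hence $x\in\overline{\Int(A_j)}$; since the reverse inclusion is trivial, $A_j=\overline{\Int(A_j)}$. This proves (ii).

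\emph{The eigenvalue identity (i).} Put $\mu_i=\Vol(A_i)$ (positive, by the above), $\beta_i=\Vol(\partial A_i)$, and $\iota_i=\Vol(\Int A_i)=\mu_i-\beta_i$ (valid since $A_i$ is closed). Applying Lebesgue measure to (\ref{eq-til}), using $\Vol(QS)=|\det Q|\,\Vol(S)$, $\partial(QS)=Q\,\partial S$, and $\partial(\bigcup_k B_k)\subset\bigcup_k\partial B_k$, I obtain three componentwise relations for the vectors $\mu,\iota,\beta$: $\ |\det Q|\,\mu\le\Sf^{\top}\mu$ (subadditivity of volume); $\ |\det Q|\,\iota\ge\Sf^{\top}\iota$ (the sets $d+\Int A_i$ are disjoint open subsets of $\Int(QA_j)$); $\ |\det Q|\,\beta\le\Sf^{\top}\beta$ (from $Q\,\partial A_j\subset\bigcup_i(\Dk_{ij}+\partial A_i)$). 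Let $u>0$ be the Perron eigenvector of $\Sf$, $\Sf u=\lambda u$. Taking inner products with $u$ (and using $\langle\Sf^{\top}v,u\rangle=\langle v,\Sf u\rangle=\lambda\langle v,u\rangle$, with $\langle\mu,u\rangle,\langle\iota,u\rangle>0$), the first relation gives $|\det Q|\le\lambda$ and the second gives $|\det Q|\ge\lambda$; hence $|\det Q|=\lambda$, which is (i). Then all three relations pair with $u$ to equalities, and since $u>0$ and the defects are nonnegative, the defects vanish; in particular $\Sf^{\top}\beta=\lambda\beta$, so by Perron--Frobenius $\beta$ is either $0$ or strictly positive.

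\emph{The boundary is null (iii).} It remains to rule out $\beta_i>0$ for all $i$. Fix a large $m$; the pieces of $A_j$ of depth $n$ under $\om^m$ (``level $mn$'') cover $A_j$, so $\partial A_j\subset\bigcup_P\partial P$, and for a piece $P=Q^{-mn}(z_P+A_{i_P})$ with $P\subset\Int A_j$ one has $\partial P\subset P\subset\Int A_j$, hence $\partial P\cap\partial A_j=\es$; so only the pieces meeting $\partial A_j$ contribute. Using $\partial A_j\cap Q^{-m}(d+A_i)\subset Q^{-m}(d+\partial A_i)$ and writing $\mathbf M(i,j)=\#\{d\in\Dk^{(m)}_{ij}:Q^{-m}(d+A_i)\cap\partial A_j\ne\es\}$, the number $b_j(n)$ of level-$mn$ pieces meeting $\partial A_j$ satisfies $b(n)\le(\mathbf M^{\top})^{n-1}b(1)$. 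Here $\mathbf M\le\Sf^m$ entrywise, and the inequality is strict once $m$ is large: by the closure-of-interior property, $\Int A_j$ contains a ball, and for $m$ large the level-$m$ piece containing the center of that ball fits inside it and hence does not meet $\partial A_j$. Since $\Sf^m$ is primitive, $\mathbf M\ne\Sf^m$ forces $\rho(\mathbf M)<\lambda^m=|\det Q|^m$. Therefore
\[
\Vol(\partial A_j)\ \le\ b_j(n)\,|\det Q|^{-mn}\max_i\beta_i\ \le\ C\,n^{d-1}\bigl(\rho(\mathbf M)\,|\det Q|^{-m}\bigr)^{n}\max_i\beta_i\ \longrightarrow\ 0 ,
\]
so $\Vol(\partial A_j)=0$, which is (iii).

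\emph{The main obstacle.} Parts (i) and (ii) are essentially bookkeeping; the substance is (iii). The measure identities above are perfectly consistent with $\beta_i>0$, so the extra geometric input is indispensable: one must show that $\partial A_j$ is fed only by the ``outer-adjacent'' descendant pieces and that these are exponentially rare, i.e.\ $\rho(\mathbf M)<|\det Q|^m$. This strict inequality rests on the closure-of-interior property, which is precisely where the nonempty pairwise disjoint interiors (an open set condition) enter; this is the technical core of \cite{lawa}.
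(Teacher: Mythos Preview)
The paper does not give a proof of this theorem; it is quoted as a special case of \cite[Th.~2.3 and Th.~5.5]{lawa}, with (iii) also attributed to \cite{Prag} in the tiling case. So there is no in-paper argument to compare yours against.

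Your proposal, however, is circular at the key step. For (ii) you invoke ``the discussion preceding the theorem'' to obtain a fixed-point tiling $\Tk$ with pairwise disjoint tile interiors. But that discussion lives inside Definition~\ref{def-subst}, where the $A_i$ are already \emph{assumed} to be closures of their interiors and the pieces $\Dk_{ij}+A_i$ are \emph{assumed} to have disjoint interiors; moreover, the periodic-point construction explicitly picks $x\in\Int(\supp(T_j))$, so it presupposes $\Int A_j\ne\emptyset$. None of this comes for free from the hypothesis ``$\{A_i\}$ solves the adjoint system associated with a primitive substitution Delone $\kappa$-set.'' The same gap reappears in (i): your inequality $|\det Q|\,\iota\ge\Sf^{\top}\iota$ is exactly the open-set condition (``the sets $d+\Int A_i$ are disjoint open subsets of $\Int(QA_j)$''), and you use it before establishing it. You flag this in your last paragraph as ``the technical core of \cite{lawa},'' but by then you have already relied on it upstream.

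What \cite{lawa} actually does is extract these geometric facts from the Delone hypothesis on $\Lb$ together with the disjointness of the unions in (\ref{eq-sub}). In outline: uniform discreteness of $\supp(\Lb)$ bounds point-densities from above, relative density bounds them from below, and the disjoint decomposition (\ref{eq-sub}) forces the density vector to be a positive $\Sf$-eigenvector with eigenvalue $|\det Q|$, which gives (i) without assuming OSC; one then shows $\bigcup_i(\Lambda_i+A_i)=\R^d$ (using $Q$-invariance of this union and relative density of $\Lb$), whence Baire yields nonempty interior, and equality in your volume inequality then gives the measure-disjointness needed for the rest. Once those inputs are supplied, your argument for $A_j=\overline{\Int A_j}$ and your boundary-counting proof of (iii) are essentially correct.
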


Note that for substitution tilings (without the FLC assumption) the property (iii) was shown in \cite{Prag}.

\smallskip

By Theorem~\ref{th-lawa}, given a substitution Delone $\kappa$-set $\Lb$, we have a natural candidate for a tiling, namely, $\Lb + \Ak := \{x + T_i :\ x\in \Lambda_i,\ i \le \kappa \}$, where $T_i = (A_i, i)$, $i\le \kappa$. It is shown in \cite[Ex.\ 3.12]{lawa} that, in general, this may not be a tiling, but only a covering. 

\begin{defi}
A substitution Delone $\kappa$-set $\Lb$, satisfying (\ref{eq-sub}) is said to be {\em representable} (by tiles) if $\Lb + \Ak $
is a tiling of $\R^d$, where $T_i = (A_i,i)$, $i \le \kappa$, and
$A_i$'s arise from the solution to the adjoint system
(\ref{eq-til}) and $\Ak = \{T_i : i\le \kappa \} $. If $\Lb$ is representable, then $\Tk:=\Lb + \Ak$ is a substitution tiling with expansion $Q$ and
we call it the {\em associated substitution tiling} of
$\Lb$. 
\end{defi}

Observe that, assuming $\Lb$ is representable, there is a one-to-one correspondence between $\Lb$-clusters and $\Tk$-patches: if $\Gbb= (G_i)_{i\le \kappa}$ is a $\Lb$-cluster, then $P = \Gbb + \Ak := \{x + T_i : x \in G_i, i \le \kappa \} $ is the associated $\Tk$-patch, and this procedure can be reversed.

A criterion for $\Lb$ to be representable 
was found in \cite{LMS2}, extending sufficient conditions from \cite{lawa}. We recall it here for reader's convenience, referring to \cite{lawa,LMS2} for details.

\begin{defi}{
For a substitution Delone $\kappa$-set $\Lb = (\Lambda_i)_{i \le \kappa}$ satisfying (\ref{eq-sub}), define a matrix $\Phi=(\Phi_{ij})_{i,j=1}^{\kappa}$ 
whose entries are finite (possibly empty) families of linear affine transformations on 
$\R^d$ given by
$$
\Phi_{ij} = \{f: x\mapsto Qx+a:\ a\in \Dk_{ij}\}\,.
$$
We define $\Phi_{ij}(\Xk) := \bigcup_{f\in \Phi_{ij}} f(\Xk)$ for  $\Xk\subset \R^d$. For a $\kappa$-set $(\Xk_i)_{i\le \kappa}$ let
$$
\Phi\bigl((\Xk_i)_{i\le \kappa}\bigr) = \Bigl(\bigcup_{j=1}^{\kappa} \Phi_{ij}(\Xk_j)\Bigr)_{i\le \kappa}\,.
$$
Thus $\Phi(\Lb)= \Lb$ by definition. We say that $\Phi$ is a {\em $\kappa$-set substitution}.
}
\end{defi}

Let $\Lb$ be a substitution Delone $\kappa$-set and $\Phi$ the associated $\kappa$-set substitution.

\begin{defi}
{\em  Let $\Lb$ be a primitive substitution Delone $m$-set and let
$\Gbb$ be a cluster of $\Lb$. The
cluster $\Gbb$ will be called {\em legal} if it is a translate of a sub-cluster of
$\Phi^k(\{x_j\})$ for some $x_j \in \Lam_j$, $j \le \kappa$ and $k \in \N$. (Here $\{x_j\}$ is a $\kappa$-set which is empty in all coordinates other than $j$, for which it is a singleton.)}
\end{defi}

\begin{theorem}\cite{LMS2}\label{legal-rep}
Let $\Lb$ be a repetitive primitive substitution Delone $\kappa$-set. $\Lb$ is representable if and only if  every $\Lb$-cluster is legal.
\end{theorem}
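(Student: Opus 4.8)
The plan is to prove the equivalence in Theorem~\ref{legal-rep} by connecting legality of clusters with the iterated-function-system structure underlying both the $\kappa$-set substitution $\Phi$ and the adjoint tile-substitution $\om$. First I would establish the easy direction: if $\Lb$ is representable, say $\Tk = \Lb + \Ak$ with $\om(\Tk)=\Tk$, then for any $\Lb$-cluster $\Gbb$ the associated $\Tk$-patch $P = \Gbb + \Ak$ sits inside $\Tk$. By primitivity and the fixed-point property $\om(\Tk)=\Tk$, every $\Tk$-tile lies in $\om^k(T)$ for some prototile $T$ and some $k$ (this is the standard ``every tile is in the $k$-th supertile of a prototile'' fact, which follows because the increasing patches $\om^{Nk}(T_j-x)$ exhaust $\R^d$). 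Since $P$ is finite, choosing $k$ large enough we get $P \subset \om^k(T_j - x)$ for a single tile; translating back to the point-set side via the correspondence $P \leftrightarrow \Gbb$ and the intertwining of $\om$ with $\Phi$ (both use the same $Q$ and the same $\Dk_{ij}$), this says $\Gbb$ is a translate of a sub-cluster of $\Phi^k(\{x_j\})$, i.e.\ $\Gbb$ is legal.

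For the converse, suppose every $\Lb$-cluster is legal; I want to show $\Lb + \Ak$ is a tiling, not merely a covering. The covering property $\bigcup_{i}(\Lam_i + A_i) = \R^d$ is automatic once we know $\supp(\Lb)$ is relatively dense and the $A_i$ have nonempty interior (Theorem~\ref{th-lawa}(ii)); more carefully, one iterates: $\Lam_i + A_i = \bigcup_j (Q\Lam_j + \Dk_{ij} + A_i)$, and using the adjoint equation $QA_j = \bigcup_i(\Dk_{ij}+A_i)$ one sees that the union of all $x + A_i$ over $\Lb$ equals $Q$ applied to the same union, so by expansiveness and relative density it must be all of $\R^d$. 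The real content is disjointness of interiors. Here I would argue that a point covered twice would force two tiles of $\Tk_0 := \Lb + \Ak$ to overlap in their interiors, hence (by relative density and compactness) an overlap occurring inside some cluster $\Gbb + \Ak$; but $\Gbb$ legal means $\Gbb + \Ak$ is a sub-patch of $\om^k(T_j - x)$, which is a genuine patch (tiles with disjoint interiors) because $\om$ maps patches to patches with disjoint interiors — that is built into Definition~\ref{def-subst} via the disjoint-interior requirement in \eqref{eq-til} and the fact (Theorem~\ref{th-lawa}(iii), or \cite{Prag}) that $\Vol(\partial A_i)=0$, so overlaps of measure zero cannot accumulate to a genuine interior overlap. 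Thus no interior overlaps occur anywhere, and $\Tk_0$ is a tiling.

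The step I expect to be the main obstacle is making the disjointness argument fully rigorous in the ILC setting: one must rule out the possibility that although every individual finite cluster embeds as a legal sub-patch of a supertile, the infinite configuration $\Lb + \Ak$ still has ``creeping'' overlaps — for instance overlaps whose total measure is zero at every finite scale but which fail injectivity of the tiling map. The resolution is a volume/frequency count: by Theorem~\ref{th-lawa}(i) the Perron--Frobenius eigenvalue of $\Sf$ is $|\det Q|$, so the ``expected'' volume density of tiles of each type matches exactly the density forced by \eqref{eq-sub}, leaving no room for overlaps of positive density; combined with the $\Vol(\partial A_i)=0$ fact and the local structure from legality, this forces $\Tk_0$ to be a tiling. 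One then invokes \cite{LMS2} for the precise bookkeeping, since this is exactly the criterion proved there; our role here is to recall the statement and indicate why it holds, with the key inputs being Theorem~\ref{th-lawa}, the disjoint-interior condition in the definition of tile-substitution, and primitivity.
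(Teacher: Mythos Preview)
The paper does not supply its own proof of this statement; it is quoted verbatim from \cite{LMS2}, so there is no in-paper argument to compare against. That said, your outline is essentially the standard proof and is correct in substance.

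Two remarks. First, for ``every cluster legal $\Rightarrow$ representable,'' the disjoint-interiors step is already finished at the local level, and your final paragraph is unnecessary: given any two tiles $x+T_i$ and $y+T_j$ of $\Lb+\Ak$, the two-point cluster $\{(x,i),(y,j)\}$ is legal by hypothesis, hence $\{x+T_i,\,y+T_j\}$ is (a translate of) a sub-patch of some $\om^k(T_\ell)$, and by Definition~\ref{def-subst} the latter is a genuine patch with pairwise disjoint interiors. There is no room for a ``creeping'' global overlap once every \emph{pair} of tiles has been handled, and the Perron--Frobenius volume count plays no role here. Second, your covering argument via $S=QS$ is correct but needs one more line to close: the complement $\R^d\setminus S$ is open and $Q$-invariant, so if it were nonempty it would contain balls of arbitrarily large radius under iteration of $Q$, contradicting the relative density of $\supp(\Lb)$ (hence of $S$). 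Alternatively, the covering property is established in \cite{lawa} for any primitive substitution Delone $\kappa$-set, independently of legality, and you may simply cite that. For the converse direction your use of repetitivity is exactly right: $k$-supertiles contain balls of radius tending to infinity, so some translate of any given patch sits inside a single supertile.
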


\begin{remark} {\em
In \cite[Lemma 3.2]{lawa} it is shown that if $\Lb$ is a substitution Delone $\kappa$-set,
then there is a finite $\kappa$-set (cluster) $\Gbb \subset \Lb$ for which
$\Phi^{n-1}(\Gbb) \subset \Phi^n(\Gbb)$ for $n \ge 1$ and 
$\Lb = \lim_{n \to \infty} \Phi^n (\Gbb)$. We call such a $\kappa$-set $\Gbb$ 
a {\em generating $\kappa$-set}.
Note that, in order to check that every $\Lb$-cluster is legal, 
we only need to see if some cluster that contains a finite generating 
$\kappa$-set for $\Lb$ is legal.}  
\end{remark}

\subsection{Supertiles and recognizability} For much of this section, we mostly work with substitution tilings, since the notions of supertiling and tile frequencies are more natural in the tiling setting. We then obtain the results for representable substitution Delone $\kappa$-sets as an immediate consequence.

\smallskip

Let $\Tk$ be a fixed point of a primitive tile-substitution $\om$.

\begin{lemma} \label{lem-subs}
The tile-substitution $\om$ extends to a continuous map $\om:\,X_\Tk\to X_\Tk$, which is a surjection.
\end{lemma}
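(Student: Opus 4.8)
The plan is to define $\om$ on all of $X_\Tk$ by continuity, using the fact that $\om$ is already defined on tilings consisting of translates of the prototiles in $\Ak$, and then verify continuity and surjectivity directly from the metric. First I would recall that every $\Sk \in X_\Tk$ is, by Lemma~\ref{lem-proto}, a tiling all of whose tiles are translates of the prototiles $T_1,\dots,T_\kappa$, so $\om(\Sk)$ is well-defined as $\bigcup_{T \in \Sk} \om(T)$; the real content is checking that the result is again a legitimate tiling in $X_\Tk$ and that the assignment is continuous. The key algebraic input is the equivariance relation $\om(\Sk - g) = \om(\Sk) - Qg$, which follows immediately from (\ref{transl}) extended tilewise. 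Combined with the fact that $\om(\Tk) = \Tk$, this shows that $\om$ maps the orbit $\{\Tk - g : g \in \R^d\}$ into itself, hence maps a dense subset of $X_\Tk$ into $X_\Tk$; once continuity is established, $\om(X_\Tk) \subseteq X_\Tk$ follows by taking limits, using that $X_\Tk$ is closed (compact).

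For continuity, I would argue as follows. Suppose $\Sk_n \to \Sk$ in $X_\Tk$. By definition of the tiling metric, for large $n$ the tilings $\Sk_n$ and $\Sk$ agree (up to a small translation $\le \e_n$, with $\e_n \to 0$) on a large ball $B_{1/\e_n}(0)$ around the origin. Since the prototiles have bounded diameter, say bounded by $D$, the substitution $\om$ has the locality property that $\om(\Sk) \cap B_R(0)$ is determined by $\Sk \cap B_{R' }(0)$ where $R' = \lVert Q^{-1}\rVert R + D$ (a tile $T$ of $\Sk$ contributes to $\om(\Sk)$ near the origin only if $QT$, equivalently $T$ rescaled, is near the origin, so only tiles within distance $\sim \lVert Q^{-1} \rVert R + D$ of the origin matter; here one uses expansivity to control $Q^{-1}$). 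Therefore $\om(\Sk_n)$ and $\om(\Sk)$ agree up to a translation of size at most $\lVert Q \rVert \e_n$ on the ball $B_{c/\e_n}(0)$ for a suitable constant $c$, which forces $d(\om(\Sk_n), \om(\Sk)) \to 0$. One should also note $\om(\Sk)$ genuinely tiles $\R^d$: the supports cover $\R^d$ because $QA_j = \bigcup_i (\Dk_{ij} + A_i)$ in (\ref{eq-til}) expresses that the expanded tile is exactly tiled, and disjointness of interiors is preserved since $\Sk$ tiles and $\om$ is applied tilewise with the disjoint-interiors condition built into Definition~\ref{def-subst}; finally $\om(\Sk) \in X_\Tk$ because it is a limit of $\om(\Tk - g_n) = \Tk - Qg_n \in X_\Tk$.

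For surjectivity, I would use recognizability-free compactness: given $\Sk \in X_\Tk$, pick $g_n \in \R^d$ with $\Tk - g_n \to \Sk$; then $\Tk - Q^{-1}g_n$ is a sequence in $X_\Tk$ (each lies in the orbit, hence in $X_\Tk$), so by compactness it has a subsequential limit $\Rk \in X_\Tk$, and by continuity $\om(\Rk) = \lim \om(\Tk - Q^{-1}g_{n_k}) = \lim (\Tk - g_{n_k}) = \Sk$. Hence $\om$ is onto.

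The main obstacle I anticipate is the bookkeeping in the continuity estimate: making precise exactly how a patch of $\Sk$ around the origin of radius $R'$ determines the patch of $\om(\Sk)$ around the origin of radius $R$, and tracking how the "wiggle" translation in the metric scales under $\om$ (it gets multiplied by roughly $\lVert Q\rVert$, which is harmless) versus how the radius of agreement scales (it gets divided by roughly $\lVert Q\rVert$, also harmless, but one must check the small translation does not destroy the combinatorial agreement of the substituted patch). In the ILC setting one must be a little careful that "agree up to a small translation on a large ball" is the correct notion and that $\om$ of two such tilings again agree up to a small translation on a large ball — this is where the absence of FLC could in principle cause trouble, but since $\om$ acts tile-by-tile and the prototile set is finite, a translation of the input by $t$ produces a translation of the output by $Qt$ of the correspondingly-placed supertile, so the estimate goes through. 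Everything else is routine verification of the tiling axioms.
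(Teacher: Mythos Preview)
Your proposal is correct and follows essentially the same approach as the paper: well-definedness via Lemma~\ref{lem-proto}, the equivariance $\om(\Sk-x)=\om(\Sk)-Qx$, and then continuity plus compactness to pass from the orbit to all of $X_\Tk$. The paper's surjectivity argument is phrased slightly differently (it notes that the orbit maps \emph{onto} the orbit since $Q$ is invertible, so $\om(X_\Tk)$ is a closed set containing a dense subset), but this is equivalent to your subsequential-limit extraction; your version has the minor advantage of producing an explicit preimage, while the paper's is a one-liner.
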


\begin{proof}
It follows from definitions that the tile-substitution $\om$ is
well-defined on the set of tilings, all of whose tiles are translates of the prototiles from $\Ak$, and $\om$ is continuous on this set. By Lemma~\ref{lem-proto}, the mapping $\om$ is defined on the tiling space $X_\Tk$.
 It follows from (\ref{transl}) that
\bee\label{intertwine}
\om(\Sk-x) = \om(\Sk)-Qx\ \ \mbox{for all}\ \Sk\in X_\Tk,\ x\in \R^d.
\eee
By  definition, we have $\om(\Tk)=\Tk$, and by (\ref{intertwine}), the orbit of $\Tk$ is mapped onto the orbit, hence by continuity we
have $\om(X_\Tk) = X_\Tk$, as desired.
\end{proof}


It follows from surjectivity of $\om$ that
we can compose the tiles of $\Sk\in X_\Tk$ 
into {\em supertiles of order $k$}, for any $k\ge 1$, although not necessarily uniquely. Formally, for $\Sk\in X_\Tk$ and $k\in\N$ there exists $\Sk^{(k)}$ such that $\om^k(\Sk^{(k)}) = \Sk$.
Then $\{Q^k T:\ T\in \Sk^{(k)}\}$
are the supertiles 
of $\Sk$ of order $k$, and the convention is that $Q^k T$ inherits the type from $T$.

Denote by $\Kk(\Tk)$ the group (a subgroup of $\R^d$) of translational periods of $\Tk$:
$$
\Kk(\Tk) = \{x\in \R^d:\ \Tk = \Tk-x\}.
$$
By the definition of $X_\Tk$ as the orbit closure of $\Tk$ we have that $\Kk(\Sk) \supset \Kk(\Tk)$ for all $\Sk\in X_\Tk$.
It follows from (\ref{intertwine}) that
$$
\om(\Sk) = \om(\Sk-x),\ \ \Sk\in X_\Tk,\ x\in Q^{-1}\Kk(\Tk).
$$
Since $Q$ is expanding and $\Kk(\Tk)$ is discrete, we see that, in the case when the group of translational periods is non-trivial, that is,
$\Kk(\Tk) \ne \{0\}$, the tile-substitution $\om$ 
is not 1-to-1 on $X_\Tk$.


\begin{defi} \label{def-recog}
{\em The tile-substitution $\om$ is said to be {\em recognizable} if $\om$ is one-to-one, in which case $\om$ is a homeomorphism of $X_\Tk$.
The tile-substitution $\om$ is called {\em recognizable modulo periods} if $\om(\Sk) = \om(\Sk')$ implies $\Sk'=\Sk-Q^{-1}x$ for some $x\in \Kk(\Tk)$.
}
\end{defi}

 Recognizability in the non-periodic primitive substitution FLC case and recognizability modulo periods in the general primitive substitution FLC case was proved in 
\cite{sol-ucp}, as a generalization of \cite{mosse} from the 1-dimensional symbolic substitution setting. Recognizability in specific non-FLC examples is usually easy to verify by inspection, but we do not know of a general criterion.

\subsection{Primitivity, repetitivity, and minimality}
We discuss these issues briefly, since there is often some confusion around them, and there are additional subtleties in the ILC case. 
In the FLC setting this topic is treated thoroughly in \cite[Section 6]{Robi.lec}. In the non-FLC case, in a more general setting than ours, both in terms of spaces of Delone sets and in terms of groups acting on them, these questions are studied in [Fre-Ric].

Let $\Tk$ be a fixed point of a tile-substitution.
Recall that a dynamical system $(X_\Tk,\R^d)$ is minimal if its every orbit is dense. Minimality implies that the substitution is primitive. Indeed, otherwise there is a prototile $T_i$, a type $j\in \{1,\ldots,\kappa\}$, and a sequence $n_k\to \infty$ such the patches
$\om^{n_k}(T_i)$ do not contain any tile of type $j$. We can then find $x_k\in \R^d$ such that $\Tk-x_k$ has a translate of the patch $\om^{n_k}(T_i)$ covering a ball $B_{R_k}(0)$, with $R_k\to \infty$. Then a subsequential limit of $\Tk-x_k$ will
be a tiling  $\Sk\in X_\Tk$ without tiles of type $j$, so clearly the orbit of $\Sk$ is not dense.

Recall that a tiling $\Tk$ is said to be repetitive if every $\Tk$-patch appears relatively dense in space. If $\Tk$ is a repetitive fixed point of a tile-substitution, then it is immediate that the tile-substitution is primitive.
The converse does not hold, even for FLC tilings, see \cite[Example 5.11]{Robi.lec}.

It is also not hard to see that if $\Tk$ is repetitive, then $X_\Tk$ is minimal (see \cite[Prop.\,3.1]{FraSa2} for the proof of the implication ``primitivity + repetitivity $\Rightarrow$ minimality'' in the more general setting of fusion tilings).
So far, everything is exactly as in the case of FLC tilings. However, whereas for FLC tilings minimality of $X_\Tk$ is {\em equivalent} to repetitivity, 
for non-FLC tilings, minimality of $X_\Tk$ is {\em equivalent} to `almost repetitivity' introduced in \cite{Fre-Ric}. The almost repetitivity may be  defined as follows: a tiling $\Tk$ is almost repetitive if and only if the set
$\Vk_\eps(\Tk):=\{x\in \R^d:\ \varrho(\Tk, \Tk+x)< \eps\}$ is relatively dense in $\R^d$ for any $\eps>0$, where $\varrho$ is the local rubber metric on the tiling space.

A patch $P$ is called {\em legal} if it is a subpatch of $\om^k(T_i)$ for some $k\in \N$ and a prototile $T_i$. 
A fixed point $\Tk$ of a primitive substitution is repetitive if and only if there is a legal $\Tk$-patch containing a neighborhood of the origin in the interior of its support.
 Following \cite{FraSa2}, we say that patches that can be obtained as limits of legal patches, but are themselves non-legal, are {\em admitted in the limit}.

In \cite{FraSa2,Frank} a slightly different definition of the substitution tiling space is used:  $X_\om$, consisting of all tilings whose every patch can be obtained as a {\em limit} of a legal patch.  
We always have $X_\om\subseteq X_\Tk$, and if $\Tk$ is a repetitive fixed point of a tile-substitution, then $X_\om=X_\Tk$.

\subsection{Patch frequencies}

\begin{theorem} \label{th-UCF1}
Let $\Tk$ be a fixed point of a primitive tile-substitution and $\{F_n\}_{n\ge 1}$ a van Hove sequence. Then

{\rm (i)} for any legal patch $P$, the tiling $\Tk$ has uniform  patch frequencies $\freq(P,\Tk)>0$ relative to $\{F_n\}_{n\ge 1}$, and they are independent of the van Hove sequence;

{\rm (ii)}  for any $\Sk\in X_\Tk$ and any legal patch $P$,
$$
\freq(P,\Sk) = \freq(P,\Tk)>0.
$$
Non-legal ones, including the patches that are admitted in the limit, have zero frequency for all $\Sk\in X_\Tk$.
\end{theorem}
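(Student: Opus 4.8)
The plan is to prove this by the classical ``unique addresses + supertile volume'' argument for primitive substitutions, carried out in the tiling setting (which then transfers to representable substitution Delone $\kappa$-sets via the one-to-one correspondence between $\Lb$-clusters and $\Tk$-patches). First I would fix a legal patch $P$, so that $P\subset \om^{\ell_0}(T_{i_0})$ for some prototile $T_{i_0}$ and some $\ell_0$. By primitivity, choose $\ell\ge 1$ with $\Sf^\ell>0$ entrywise; then for every $j\le \kappa$ the supertile $\om^{\ell+\ell_0}(T_j)$ contains a translate of $\om^{\ell_0}(T_{i_0})$, hence a translate of $P$, in the interior of its support. This gives a lower bound: every order-$k$ supertile (for $k$ large) of any tiling $\Sk\in X_\Tk$ contains at least one occurrence of $P$. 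For the frequency itself, the standard trick is to count occurrences of $P$ inside large supertiles. Define, for $k\ge \ell+\ell_0$, the vector $b^{(k)}=(b^{(k)}_j)_{j\le\kappa}$ where $b^{(k)}_j$ is the number of translates of $P$ contained in the interior of $\supp(\om^k(T_j))$ (this is finite and well-defined, counting only occurrences lying in the open support so that boundary ambiguities do not arise; here I should note that by Theorem~\ref{th-lawa}(iii), $\Vol(\partial A_i)=0$ and likewise for supertile boundaries, so boundary effects are negligible). The recursion $b^{(k+1)} = \Sf\, b^{(k)} + (\text{boundary-crossing occurrences})$, together with Perron--Frobenius applied to $\Sf$ (whose PF eigenvalue is $|\det Q|$ by Theorem~\ref{th-lawa}(i)) and with $\Vol(\supp(\om^k(T_j))) = |\det Q|^k \Vol(A_j)$, yields that $b^{(k)}_j/|\det Q|^k$ converges, as $k\to\infty$, to a positive limit $c$ independent of $j$; this $c$ is the candidate frequency $\freq(P,\Tk)$, and positivity follows from the lower bound obtained above.

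Next I would establish the uniform patch frequency statement (i), and simultaneously (ii). Fix a van Hove sequence $\{F_n\}$ and $h\in\R^d$, and consider the tiling $\Sk\in X_\Tk$. Decompose a large chunk of $\Sk$ near $h+F_n$ into order-$k$ supertiles: by surjectivity of $\om$ (Lemma~\ref{lem-subs}) there exists $\Sk^{(k)}$ with $\om^k(\Sk^{(k)})=\Sk$, so the order-$k$ supertiles of $\Sk$ tile $\R^d$. The number of order-$k$ supertiles of type $j$ whose support meets $h+F_n$ is, up to a van Hove error term controlled by $\Vol((\partial F_n)^{+R_k})/\Vol(F_n)\to 0$ (with $R_k$ the diameter of an order-$k$ supertile), asymptotically $\dens_j \cdot \Vol(F_n)/|\det Q|^{-k}$ where $\dens_j$ is the asymptotic density of type-$j$ supertiles of order $k$ — and this density is itself governed by the left Perron--Frobenius eigenvector of $\Sf$. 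Counting occurrences of $P$ supertile-by-supertile (each type-$j$ order-$k$ supertile contributing $b^{(k)}_j$ occurrences inside its interior, plus a correction for occurrences straddling supertile boundaries which is $o(|\det Q|^k)$ relative to the supertile volume), one gets
\[
\frac{L_P(h+F_n,\Sk)}{\Vol(F_n)} \;=\; \sum_{j\le\kappa} \frac{b^{(k)}_j}{|\det Q|^k}\cdot \frac{(\text{number of type-}j\text{ order-}k\text{ supertiles near }h+F_n)\cdot |\det Q|^{-k}\cdot|\det Q|^k}{\Vol(F_n)} \;+\; (\text{errors}),
\]
and the right-hand side, by the convergence $b^{(k)}_j/|\det Q|^k\to c$ and the supertile density computation (which gives $\sum_j (\#\text{type-}j) \cdot |\det Q|^{-k} \to \Vol(F_n)(1+o(1))$ since supertiles tile space), tends to $c$ as first $n\to\infty$ and then $k\to\infty$, uniformly in $h$. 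This simultaneously proves that the limit exists, is uniform in $h$, equals $c=\freq(P,\Tk)$ for every $\Sk$, and is independent of the van Hove sequence. For the final assertion about non-legal patches: if $P$ is non-legal (in particular if it is admitted only in the limit) then no supertile $\om^k(T_j)$ contains a translate of $P$, so $L_P(h+F_n,\Sk)$ counts only occurrences of $P$ that cross boundaries between order-$k$ supertiles; the number of such occurrences is $O(\Vol((\partial(\text{supertile lattice}))^{+\diam P}\cap(h+F_n)))$, which relative to $\Vol(F_n)$ is bounded by a constant times $|\det Q|^{-k}$ (the boundary-to-volume ratio of order-$k$ supertiles), uniformly in $n$ and $h$; letting $k\to\infty$ forces the frequency to be $0$.

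The main obstacle is controlling the boundary-crossing terms rigorously in the ILC (non-FLC) setting. Without FLC one cannot simply say ``finitely many patch types of a given diameter,'' and the sliding/fault-line phenomenon means that the way adjacent supertiles meet varies continuously. However, the count we need is not of \emph{types} but of \emph{occurrences} of the one fixed patch $P$, and an occurrence of $P$ that is not contained in a single order-$k$ supertile must have its support meeting the $(\diam P)$-neighborhood of the union of supertile boundaries; the Lebesgue measure of that neighborhood intersected with $h+F_n$ is what we bound, using Theorem~\ref{th-lawa}(iii) ($\Vol(\partial A_i)=0$, hence $\Vol(\partial(\om^k T_j))=0$) and a uniform bound on the number of tiles per unit volume (uniform discreteness of the associated Delone set) to convert measure bounds into occurrence-count bounds. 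So the FLC-free difficulty is circumvented precisely because we only ever count a single patch at a time. Once this is set up, the rest is Perron--Frobenius bookkeeping together with the van Hove property; and the passage to representable substitution Delone $\kappa$-sets is immediate from the bijection between $\Lb$-clusters and $\Tk$-patches noted after the definition of representability.
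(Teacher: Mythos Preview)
Your approach is essentially the same as the paper's sketch: establish the supertile frequency $c_P=\lim_k L_P(Q^kA_i,\Tk)/\Vol(Q^kA_i)$ via Perron--Frobenius (using that $\{Q^kA_i\}$ is van Hove thanks to Theorem~\ref{th-lawa}(iii)), then decompose $h+F_n$ into order-$k$ supertiles of $\Sk$ and control boundary-crossing occurrences via Lemma~\ref{lem-ezer} and the van Hove property, with the non-legal case handled by the same boundary estimate. Two minor slips to correct: the limit of $b^{(k)}_j/|\det Q|^k$ is $c_P\cdot\Vol(A_j)$, not a $j$-independent constant (it is $b^{(k)}_j/\Vol(Q^kA_j)$ that converges to the $j$-independent $c_P$, since $(\Vol(A_j))_j$ is the left Perron eigenvector of $\Sf$), and the supertile boundary-to-volume ratio $\Vol((\partial Q^kA_j)^{+r})/\Vol(Q^kA_j)$ is only $o(1)$ as $k\to\infty$, not $O(|\det Q|^{-k})$.
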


We note that zero frequency of patches that are admitted in the limit has been shown in \cite{FraSa2}
 in the more general context of fusion tilings (but assuming recognizability, which we do not need). 
The following elementary estimate will be useful.

\begin{lemma}[see Lemma A.4(i) in \cite{LMS2}] \label{lem-ezer}
Let $\Tk$  be a fixed point of a primitive substitution, and let $F\subset \R^d$ be an arbitrary bounded set. Then for any $\Tk$-patch $P$ and any $h\in \R^d$ we have:
$$
L_P(F,\Tk) \le V_{\min}^{-1} \cdot \Vol(F),
$$
where $V_{\min}$ is the minimal volume of $\Tk$-prototile.
\end{lemma}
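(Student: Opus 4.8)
The plan is to reduce the count $L_P(F,\Tk)$ to a packing estimate for disjoint tiles inside $F$. We may assume $P\neq\emptyset$, since otherwise there is nothing to prove. Fix once and for all a single tile $T_0\in P$, say $T_0=(A_0,i_0)$ with $A_0=\supp(T_0)$. Recall that each prototile support is the closure of its interior and hence has positive volume, so $V_{\min}>0$ is well defined; note also $A_0\subseteq \supp(P)$.

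First I would observe that for every $g\in\R^d$ counted in $L_P(F,\Tk)$ — i.e.\ $g+P\subset\Tk$ and $g+\supp(P)\subset F$ — the tile $g+T_0$ belongs to $\Tk$ and satisfies $g+A_0\subseteq g+\supp(P)\subseteq F$. The map $g\mapsto g+T_0$ is injective (if $g_1+T_0=g_2+T_0$ then $g_1+A_0=g_2+A_0$, forcing $g_1=g_2$), so distinct admissible $g$'s produce distinct $\Tk$-tiles, all of whose supports lie in $F$. Since $\Tk$ is a tiling, distinct tiles have disjoint interiors; therefore the sets $\{\Int(g+A_0)\}$, taken over all admissible $g$, are pairwise disjoint subsets of $F$, each of volume $\Vol(A_0)\ge V_{\min}$.

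Summing volumes then gives
\[
L_P(F,\Tk)\cdot V_{\min}\ \le\ \sum_{g}\Vol\bigl(\Int(g+A_0)\bigr)\ =\ \Vol\Bigl(\bigsqcup_{g}\Int(g+A_0)\Bigr)\ \le\ \Vol(F),
\]
which is the desired inequality; the same bound holds for $L_P(h+F,\Tk)$ since $\Vol(h+F)=\Vol(F)$, so the parameter $h$ plays no role. There is no real obstacle here: the argument uses only the tiling axiom (disjoint interiors), positivity of prototile volumes, and additivity of Lebesgue measure; primitivity and the fixed-point property of $\om$ are not needed beyond guaranteeing the standing hypotheses on the prototiles. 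The one point to state carefully is the inclusion $A_0\subseteq\supp(P)$ together with $g+\supp(P)\subseteq F$, which is what places every tile $g+T_0$ inside $F$.
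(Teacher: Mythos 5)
Your argument is correct and is essentially the standard counting proof that the paper delegates to Lemma A.4(i) of \cite{LMS2}: each occurrence $g+P\subset\Tk$ with support in $F$ yields a distinct tile $g+T_0$ of $\Tk$ lying in $F$, and since distinct tiles have disjoint interiors the count is at most $\Vol(F)/V_{\min}$. The only point worth making explicit is that your volume bound uses $\Vol(\Int(g+A_0))=\Vol(A_0)\ge V_{\min}$, which requires $\Vol(\partial A_0)=0$; this is supplied by Theorem~\ref{th-lawa}(iii), so there is no gap.
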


\begin{proof}[Proof sketch of Theorem~\ref{th-UCF1}]
(i) This was proved under the standing FLC assumption in \cite[Appendix A.1]{LMS2},
however,  the  argument did not use the FLC. Note that for any prototile $(A_i,i)$, the sequence $\{Q^n A_i\}_{n\ge 0}$ is van Hove.
This follows from the property $\Vol(\partial A_i)=0$, see Theorem~\ref{th-lawa}(iii).  Next, it is shown that, given a legal patch $P$, the limit
\begin{equation} \label{ezer2}
c_P:= \lim_{n\to \infty} \frac{L_P(Q^n A_i,\Tk)}{\Vol(Q^n A_i)}>0
\end{equation}
exists and is independent of the tile type $i$. This is similar to the proof of uniform word frequencies for primitive symbolic substitution systems (see \cite{Queff}), relying on 
the Perron-Frobenius Theorem. In the next step, we consider the decomposition of $\R^d$ into the supertiles $Q^k T$, for $T\in \Tk$, using the fact that $\Tk = \om^k(\Tk)$. Then
$L_P(h + F_n,\Tk)$ is the sum of $L_P(Q^k A,\Tk)$, where $Q_k(A)$ ranges over the supports of those supertiles of order $k$ which lie in $h+F_n$, up to the ``boundary effects'' from $h + \partial F_n$ and the boundaries $\partial Q^k A$. Then the desired claim follows from (\ref{ezer2}), Lemma~\ref{lem-ezer}, and the van Hove property.

(ii) For a legal patch, the proof proceeds along the same lines as in part (i). Let $\Sk\in X_\Tk$. We already know the existence of $c_P>0$. Recall that by Lemma~\ref{lem-subs} there exists a decomposition of $\Sk$ into supertiles of order $k$, for any $k\in \N$ (we don't need uniqueness of the decomposition, so recognizability is not needed). In these supertiles the frequency of $P$ approaches $c_P$, and what remains are boundary effects. As for the patches that are non-legal, they necessarily have to intersect the boundary of supertiles for every order $k$, hence their number is
negligible, because the sets $Q^k A_i$ have the van Hove property. 
\end{proof}

The following is now immediate.

\begin{cor} \label{cor-freq}
Let $\Lb$ be a representable primitive substitution Delone $\kappa$-set and $\Gbb\subset \Lb$ a cluster. Then for any $\Gb\in X_{\Lbs}$ we have
$$
\freq(\Gbb,\Lb) = \freq(\Gbb,\Gb).
$$
\end{cor}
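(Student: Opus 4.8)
The plan is to transfer Theorem~\ref{th-UCF1}(ii) from the tiling setting to the Delone $\kappa$-set setting via the representability correspondence. First I would invoke the standing hypothesis that $\Lb$ is representable, so that by definition $\Tk := \Lb + \Ak$ is a primitive substitution tiling with expansion $Q$, where $\Ak = \{T_i = (A_i,i) : i \le \kappa\}$ and the $A_i$ arise from the solution of the adjoint system~(\ref{eq-til}). Since $\Lb$ is a primitive substitution Delone $\kappa$-set, $\Tk$ is a fixed point of the associated primitive tile-substitution $\om$ (after possibly passing to a power, which does not affect the tiling space or frequencies), so Theorem~\ref{th-UCF1} applies to $\Tk$.

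Next I would use the one-to-one correspondence between $\Lb$-clusters and $\Tk$-patches noted just after the definition of ``representable'': to the cluster $\Gbb = (G_i)_{i \le \kappa}$ corresponds the patch $P := \Gbb + \Ak = \{x + T_i : x \in G_i,\ i \le \kappa\}$, and conversely. The key point is that this correspondence is compatible with translations and with passing to the hull: the topological conjugacy of Lemma~\ref{top-conj} between $(X_\Tk,\R^d)$ and $(X_{\Lbs},\R^d)$ is implemented precisely by $\Sk \mapsto \Lb_{\Sk}$, so for each $\Gb \in X_{\Lbs}$ there is a corresponding $\Sk \in X_\Tk$ with $\Lb_{\Sk} = \Gb$, and a translate $g + \Gbb \subset \Gb$ holds if and only if $g + P \subset \Sk$ (using that the prototiles contain the origin in their interiors, so the placement of a cluster point determines the placement of the corresponding tile and vice versa). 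Moreover the containment conditions on supports match: $(g + \supp(\Gbb)) \subset F$ corresponds, up to a controlled bounded discrepancy governed by $\max_i \diam(A_i)$, to $(g + \supp(P)) \subset F$. Hence $L_{\Gbbs}(F, \Gb)$ and $L_P(F, \Sk)$ differ only by boundary terms supported in an $O(1)$-neighborhood of $\partial F$; applied along a van Hove sequence $\{F_n\}$, these boundary terms are negligible relative to $\Vol(F_n)$, so $\freq(\Gbb, \Gb) = \freq(P, \Sk)$ whenever either side exists.

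Combining these observations: $\freq(\Gbb, \Gb) = \freq(P, \Sk)$ and $\freq(\Gbb, \Lb) = \freq(P, \Tk)$ (the latter is the special case $\Gb = \Lb$, $\Sk = \Tk$), and Theorem~\ref{th-UCF1}(ii) gives $\freq(P, \Sk) = \freq(P, \Tk)$ for the legal patch $P$ — note that since every cluster of a representable substitution Delone $\kappa$-set is legal by Theorem~\ref{legal-rep}, the corresponding patch $P$ is legal, so part (ii) applies (and gives a strictly positive value). Chaining the three equalities yields $\freq(\Gbb, \Lb) = \freq(\Gbb, \Gb)$, as claimed.

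I expect the main technical point to be the careful bookkeeping of the boundary discrepancy in the previous paragraph: one must check that replacing the support condition on a cluster by the support condition on the associated patch only changes the count $L$ by an amount bounded by a constant times the number of tiles meeting $(\partial F_n)^{+r}$ for a fixed $r$ depending on $\max_i \diam(A_i)$ and $\eta(\Lb)$, and that this is $o(\Vol(F_n))$ by the van Hove property together with Lemma~\ref{lem-ezer}. This is entirely routine given the machinery already in place — indeed it is the same type of estimate used in the proof of Theorem~\ref{th-UE2} — so I would state it briefly rather than belabor it. Everything else is a formal translation through the conjugacy of Lemma~\ref{top-conj} and the cluster–patch dictionary, so the corollary is genuinely ``immediate'' as advertised.
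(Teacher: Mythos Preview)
Your approach is essentially the one the paper has in mind: translate to the tiling setting via the cluster--patch dictionary and the conjugacy of Lemma~\ref{top-conj}, then invoke Theorem~\ref{th-UCF1}(ii). The boundary-discrepancy bookkeeping you outline is correct and routine.

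There is one genuine slip. You invoke Theorem~\ref{legal-rep} to conclude that every $\Lb$-cluster is legal, but that theorem assumes repetitivity, which is \emph{not} a hypothesis of Corollary~\ref{cor-freq}. A representable primitive substitution Delone $\kappa$-set need not be repetitive, and its fixed-point tiling $\Tk$ may well contain non-legal patches (e.g., at a ``seam'' near the origin). Fortunately this does not break the argument: Theorem~\ref{th-UCF1}(ii) also asserts that non-legal patches have frequency zero in \emph{every} $\Sk\in X_\Tk$, including $\Sk=\Tk$, so the equality $\freq(P,\Sk)=\freq(P,\Tk)$ holds regardless of legality. You should drop the appeal to Theorem~\ref{legal-rep} and instead split into the legal and non-legal cases, both of which are covered by Theorem~\ref{th-UCF1}(ii). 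With that correction the proof goes through.
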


\qed


\subsection{Unique ergodicity and measure of cylinder sets}
As mentioned in the introduction, the following theorem can also be deduced from \cite{FraSa2}; we present a proof using our approach.

\begin{theorem} \label{th-UE3}
Let $\Tk$ be a fixed point of a primitive tile-substitution. Then the associated topological dynamical system $(X_\Tk,\R^d)$ is uniquely ergodic. 
\end{theorem}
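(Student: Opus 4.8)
The plan is to deduce Theorem~\ref{th-UE3} from the general criterion for unique ergodicity established earlier, namely Corollary~\ref{cor-tech}, by passing to the associated Delone $\kappa$-set and checking that the two hypotheses there are met. First I would recall that since $\Tk$ is a fixed point of a primitive tile-substitution, Lemma~\ref{top-conj} tells us that $(X_\Tk,\R^d)$ is topologically conjugate to $(X_{\Lbs},\R^d)$, where $\Lb=\Lb_\Tk$ is the Delone $\kappa$-set associated to $\Tk$; unique ergodicity is a conjugacy invariant, so it suffices to prove $(X_{\Lbs},\R^d)$ is uniquely ergodic. Note that $\Lb$ is a representable primitive substitution Delone $\kappa$-set (it is representable by construction, as $\Tk=\Lb+\Ak$), so there is a one-to-one correspondence between $\Lb$-clusters $\Gbb$ and $\Tk$-patches $P=\Gbb+\Ak$.

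The first hypothesis of Corollary~\ref{cor-tech}, that $\Lb$ has UCF, follows from Theorem~\ref{th-UCF1}(i) together with Corollary~\ref{cor-freq}: every legal patch of $\Tk$ has uniform patch frequency, and the non-legal clusters (whose frequency is $0$) cause no difficulty, since the correspondence between clusters and patches transports uniform patch frequencies for $\Tk$ into uniform cluster frequencies for $\Lb$. The main work is verifying the technical tail condition \eqref{tech_cond}: for each $m\ge m_0$ and $\alpha\in\{1,\ldots,N_m\}$, given $\eps>0$ we must find $k_0$ so that for all $n\ge k_0$ and all $h\in\R^d$,
\[
\sum_{j=k_0}^\infty \frac{\Vol(V^{(m,\alpha)}_j)\, L_{\Gbbs_j^{(m,\alpha)}}(h+F_n)}{\Vol(F_n)}\le \eps.
\]
Here I would use the key structural fact that the clusters $\Gbb^{(m,\alpha)}_j$ with large index $j$ correspond to $\Tk$-patches that become increasingly ``complex'' in a bounded region, and such patches are rare. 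Concretely, recall from the construction (around \eqref{decomp2}) that the sets $x_{j,\nu}+V_j$ are pairwise disjoint over all $j$ and all occurrences $\nu$, and they all lie inside the $m$-grid boxes; moreover $\Vol(V^{(m,\alpha)}_j)\le 2^{-md}$. The point is that $\sum_j \Vol(V_j^{(m,\alpha)})\,\freq(\Gbb_j^{(m,\alpha)},\Lb)$ converges (it is bounded by the measure of $X(U_{m,\alpha})\le 1$, via the frequency interpretation), so its tail can be made small; but to get \emph{uniform} control on $L_{\Gbbs_j}(h+F_n)/\Vol(F_n)$ for all $n$ and all $h$ simultaneously I would invoke the supertile decomposition used in the proof of Theorem~\ref{th-UCF1}. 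Partitioning $h+F_n$ into supertiles $Q^k A$ of a fixed high order $k$ (using $\Tk=\om^k(\Tk)$ and Lemma~\ref{lem-subs}), up to van Hove boundary effects, reduces the estimate to counting, inside a single supertile support $Q^k A_i$, the total number of legal clusters from $U_{m,\alpha}$ with index $j\ge k_0$; since a supertile of order $k$ contains only finitely many distinct sub-patches of bounded diameter, and these are all legal, by choosing $k_0$ larger than the number of cluster-types appearing in any order-$k$ supertile this count is forced to $0$ (up to boundary terms). Combining with Lemma~\ref{lem-ezer} to bound the boundary contributions and using the van Hove property of both $\{F_n\}$ and $\{Q^kA_i\}$, I obtain \eqref{tech_cond}.

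Having verified both hypotheses, Corollary~\ref{cor-tech} gives that $(X_{\Lbs},\R^d)$ is uniquely ergodic, and hence so is $(X_\Tk,\R^d)$ by Lemma~\ref{top-conj}, completing the proof. The main obstacle I anticipate is the uniformity (in $n$ \emph{and} in $h$) of the tail estimate \eqref{tech_cond}: a pointwise-in-$h$ or large-$n$ argument is easy from UCF, but the supertile decomposition must be set up carefully so that the error terms are controlled by $\Vol((\partial F_n)^{+r})/\Vol(F_n)$ with constants independent of $h$, and so that the ``finitely many cluster-types per supertile'' bound is genuinely independent of which supertile and which translate one looks at. This is exactly the place where primitivity (finiteness of the prototile set, so that order-$k$ supertiles have boundedly many sub-patches) and the van Hove property of $\{Q^k A_i\}$ (Theorem~\ref{th-lawa}(iii)) are essential.
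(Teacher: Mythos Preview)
Your overall strategy is essentially the paper's: verify the tail condition of Corollary~\ref{cor-tech} by observing that only finitely many patch-types of bounded diameter occur inside any fixed order-$k$ supertile, so that high-index clusters must straddle supertile boundaries, whose total volume is negligible by the van Hove property of $\{Q^k A_i\}$. The paper organizes this via the notion of ``$k$-special'' patches ($\sp(P)=k$ if $k$ is minimal with $P$ a subpatch of some $\om^k(T_j)$), and works directly on the tiling side rather than passing through Lemma~\ref{top-conj}, but the content is the same.

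There is, however, a genuine gap in your boundary estimate. You invoke Lemma~\ref{lem-ezer} to control the contribution from clusters near the supertile boundaries, but that lemma bounds $L_P(F)$ for a \emph{single} patch $P$; applying it termwise to $\sum_{j\ge k_0}\Vol(V_j)\,L_{\Gbbs_j}(B)$ yields only $V_{\min}^{-1}\Vol(B)\sum_{j\ge k_0}\Vol(V_j)$, and in the non-FLC case there is no reason for $\sum_j\Vol(V_j)$ to be finite. The paper explicitly points out that Lemma~\ref{lem-ezer} is insufficient here and instead proves a direct bound: the total number of $\Tk$-patches of diameter $\le r$ contained in a set $A$ is at most $C(r)\cdot\Vol(A)$, with $C(r)=2^{\Vol(B_r)/V_{\min}}/V_{\min}$ (each tile lies in at most $2^{\Vol(B_r)/V_{\min}}$ such patches, and there are at most $\Vol(A)/V_{\min}$ tiles meeting $A$). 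Alternatively, on the Delone-set side you could appeal to the disjointness of the sets $x_{j,\nu}-V_j$ used in the proof of Theorem~\ref{th-UE2} (cf.\ \eqref{claim22}) to obtain $\sum_j L_{\Gbbs_j}(A)\le 2^{md}\Vol(A^{+2^{-m}})$, which combined with $\Vol(V_j)\le 2^{-md}$ gives what you need. Either fix completes your argument.
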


\begin{proof}
We are going to verify the hypothesis of Corollary~\ref{cor-tech}, translated to the tiling setting. The tiling $\Tk$ has countably many patches. Fix $r>0$ and enumerate all the patches $P_j$ whose supports have diameter $\le r$. (In the FLC case this will be a finite set.) 
Fix also a van Hove sequence $\{F_n\}_{n\ge 1}$. 
In view of Corollary~\ref{cor-tech}, 
it suffices to show that for any $\eps>0$ there exists $k_0$ such that for all $h\in \R^d$ and $n\ge k_0$,
\be \label{cond3}
\sum_{j=k_0}^\infty \frac{L_{P_j}(h+F_n, \Tk)}{\Vol(F_n)} \le \eps.
\ee
Say that a patch $P\subset \Tk$ is {\em $k$-special} and write $\sp(P)=k$ if $P$ occurs (up to translation) as a subpatch of $\om^k(T_j)$ for some $j$ and $k\in \N$ is minimal with this property.  
Note that the hierarchical structure for $\Tk$ is fixed, even if recognizability does not hold. We write $\sp(P)=\infty$ if $\sp(P)\ge k$ for all $k\in \N$. This will happen if all occurrences of the patch $P$ in $\Tk$ cross the boundary of a supertile, for any level supertiling composed from $\Tk$.
Clearly, for any $k$ there are  finitely many  $k$-special patches.  We will next estimate the total number of patches $P_j$ of diameter $\le r$, with $\sp(P_j)>k$, that are contained in $F_n + h$.
By definition, these patches must  intersect the boundary of one of the $k$-level supertiles.  Therefore,
\begin{eqnarray}
\sum_{P_j:\ \sp(P_j)>k} L_{P_j} (h+F_n, \Tk)&  \le & {\Large \#}\Bigl\{P\subset \Tk:\ \diam(P)\le  r, \nonumber\\
& & \supp(P) \subset \!\!\!\!\!\!\bigcup_{\stackrel{\scriptstyle{(A,l)\in \Tk:}} {Q^k A \subset (h+F_n)^{+r}}}\!\!\!\!\!\! \bigl(\partial (Q^k A) \bigr)^{+r}\Bigr\}. \label{eq-new1}
\end{eqnarray}
One can see that the number of $\Tk$-patches of diameter $\le r$ contained in any given set is bounded above by $C(r)$ times the volume of that set, with a  constant $C(r)$ depending only on the tiling and on $r>0$. Note that this does not follow from Lemma~\ref{lem-ezer} in the non-FLC case. An easy (although inefficient) estimate is as follows: let $V_{\min}$ be the minimal volume of a $\Tk$-prototile. Then any patch of diameter $\le r$ contains at most $\Vol(B_r)/V_{\min}$ tiles, hence any given tile belongs to at most $2^{{\rm Vol}(B_r)/V_{\min}}$ patches of diameter $\le r$. Thus we can take
$$
C(r):= \frac{2^{{\rm Vol}(B_r)/V_{\min}}}{V_{\min}},
$$
and (\ref{eq-new1}) yields
$$
\sum_{P_j:\ \sp(P_j)>k} L_{P_j} (h+F_n, \Tk) \le C(r) \cdot\!\!\!\!\!\! \sum_{\stackrel{\scriptstyle{(A,l)\in \Tk:}} {Q^k A \subset (h+F_n)^{+r}}}\!\!\!\!\!\! \Vol\Bigl(\bigl(\partial (Q^k A) \bigr)^{+r}\Bigr).
$$
Let $\delta>0$. Recall that $\{Q^k A\}_{k\ge 1}$ is a van Hove sequence, hence there exists $k_1$ such that 
$$
\Vol \Bigl(\bigl(\partial (Q^k A) \bigr)^{+r}\Bigr) \le \delta \cdot\Vol(Q^k A),
$$
for any tile support $A$ and any $k\ge k_1$. Thus
$$
\sum_{P_j:\ \sp(P_j)>k} L_{P_j} (h+F_n, \Tk) \le C(r)\cdot \delta\cdot \Vol\bigl((h+F_n)^{+r}\bigr) = C(r) \cdot \delta \cdot \Vol\bigl((F_n)^{+r}\bigr).
$$
Since $F_n$ is van Hove as well, there exists $n_1$ such that $\Vol\bigl((F_n)^{+r}\bigr) \le (1+\delta)\Vol(F_n)$, for all $n \ge n_1$. So finally, we obtain
$$
\sum_{P_j:\ \sp(P_j)>k} \frac{L_{P_j} (h+F_n, \Tk)}{\Vol(F_n)}  \le C(r)\cdot \delta(1+\delta),
$$
and since the right-hand side can be made arbitrarily small, (\ref{cond3}) follows.
\end{proof}

Now we return to substitution Delone $\kappa$-sets.

\begin{cor}\label{cor-UCF3}
Let $\Lb$ be a representable primitive substitution Delone $\kappa$-set. Then the associated topological dynamical system $(X_{\Lbs},\R^d)$ is uniquely ergodic.
\end{cor}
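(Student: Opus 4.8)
The plan is to deduce Corollary~\ref{cor-UCF3} directly from Theorem~\ref{th-UE3} via the topological conjugacy supplied by Lemma~\ref{top-conj}. The key observation is that unique ergodicity is a property of a topological dynamical system that is invariant under topological conjugacy: if $\phi : (X,\R^d) \to (Y,\R^d)$ is a homeomorphism intertwining the two $\R^d$-actions, then $\mu \mapsto \phi_*\mu$ is a bijection between the sets of invariant Borel probability measures on $X$ and on $Y$, so one space has a unique invariant probability measure if and only if the other does.

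First I would recall that, since $\Lb$ is a representable primitive substitution Delone $\kappa$-set, by definition $\Tk := \Lb + \Ak$ (with $A_i$'s the solution of the adjoint system~(\ref{eq-til})) is a substitution tiling with expansion $Q$, and moreover it is a fixed point of the associated tile-substitution $\om$ (replacing $\om$ by a power $\om^N$ if necessary, as explained after Definition~\ref{def-subst}, which does not change the tiling space $X_\Tk$ nor the dynamical system). Primitivity of $\Lb$ translates into primitivity of the tile-substitution via the substitution matrix $\Sf$, which is common to both. Hence $\Tk$ satisfies the hypotheses of Theorem~\ref{th-UE3}, and $(X_\Tk, \R^d)$ is uniquely ergodic.

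Next, I would invoke Lemma~\ref{top-conj}: the map $\Sk \mapsto \Lb_\Sk$, with the prototile supports $A_i \subset \R^d$ held fixed, is a topological conjugacy between $(X_\Tk, \R^d)$ and $(X_{\Lbs}, \R^d)$ (note that here $\Lb = \Lb_\Tk$ because $\Lb$ is representable, so the Delone $\kappa$-set dynamical system built from $\Lb$ coincides with the one built from the associated tiling $\Tk$). Combining this with the conjugacy-invariance of unique ergodicity recalled above, we conclude that $(X_{\Lbs}, \R^d)$ is uniquely ergodic as well.

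There is essentially no obstacle here beyond bookkeeping: the genuine work has already been carried out in Theorem~\ref{th-UE3}, and the present statement is a transfer along a conjugacy. The only point requiring a line of care is to make sure that the tiling $\Tk$ produced from a representable $\Lb$ is genuinely a \emph{fixed point} of a (possibly iterated) primitive tile-substitution, so that Theorem~\ref{th-UE3} applies verbatim; this is exactly what representability together with Theorem~\ref{th-lawa} and the remarks following Definition~\ref{def-subst} guarantee.
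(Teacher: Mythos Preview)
Your proof is correct and matches the paper's intended (implicit) argument: the paper states the corollary immediately after Theorem~\ref{th-UE3} with no proof, treating it as a direct consequence of the topological conjugacy of Lemma~\ref{top-conj}. One minor remark: the step of passing to a power $\om^N$ is unnecessary here, since by the very definition of representability the tiling $\Tk=\Lb+\Ak$ is already a \emph{substitution tiling}, i.e.\ a fixed point of $\om$, so Theorem~\ref{th-UE3} applies directly.
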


Let $\mu$ be the unique translation-invariant Borel probability measure on $X_{\Lbs}$.
We will need formulae for the measure of cylinder sets $X(U_{m,\alpha})$. Although it seems natural that the constant $C_{m,\alpha}$ in (\ref{unconv2}) must be $\int_{X_{\Lbt}} \Chi_{m,\alpha}\, d \mu = \mu(X(U_{m,\alpha}))$, this is not immediately clear, since the characteristic function is not continuous. The usual approach is to approximate the characteristic function by continuous functions, but this procedure is less trivial in the ILC case.

\begin{prop} Let $\Lb$ be a representable primitive substitution Delone $\kappa$-set, and let $\mu$ be the unique invariant Borel probability measure for the associated dynamical system.

{\bf (i)} For any cluster $\Gbb\subset \Lb$ and a Borel set $V\subset \R^d$ we have
\bee \label{meas1}
\mu(X(\Gbb,V) )= \Vol(V)\cdot\freq(\Gbb,\Lb).
\eee

{\bf (ii)} For $m\ge m_0$ let
$X(U_{m, \alpha})$ be as defined in (\ref{def-Xma}) and $\Gbb_j^{(m,\alpha)}$ and $V_j^{(m,\alpha)}$ are from (\ref{decomp2}). Suppose that $\Lb$ has UCF. Then 
\bee \label{meas2} 
\mu(X(U_{m, \alpha})) = \sum_{j=1}^{\infty} \Vol(V_j^{(m,\alpha)}) \cdot \freq(\Gbb^{(m,\alpha)}_j, \Lb)\,.
\eee
Moreover,
\bee \label{meas3}
\mu(\wtil{X}_{m,\alpha})=0,
\eee
where $\wtil{X}_{m,\alpha}$ is the collection of Delone $\kappa$-sets from $X_{\Lbs}$ that are admitted in the limit, see (\ref{decomp2}).
\end{prop}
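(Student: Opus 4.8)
The plan is to deduce all three formulas from unique ergodicity (Corollary~\ref{cor-UCF3}) combined with the structural decompositions (\ref{decomp1}) and (\ref{decomp2}), together with the already-established fact (Theorem~\ref{th-UCF1}(ii) via Corollary~\ref{cor-freq}) that non-legal patches --- in particular those admitted in the limit --- have zero frequency. The key point is to identify the constant $C_{m,\alpha}$ appearing in (\ref{unconv2}) from the proof of Theorem~\ref{th-UE2} with $\mu(X(U_{m,\alpha}))$, despite $\Chi_{m,\alpha}$ not being continuous.

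\textbf{Step 1: formula (\ref{meas1}).} First I would prove (\ref{meas1}) directly. Fix a legal cluster $\Gbb$ and a Borel set $V$; by inner/outer regularity of $\mu$ and of Lebesgue measure it suffices to treat $V$ a finite union of small cubes, and by additivity a single small cube of side $2^{-m}$ with $m$ large. For such $V$ the cylinder $X(\Gbb,V)$ is one of the sets $X(\Gbb_j^{(m,\alpha)},V_j^{(m,\alpha)})$ appearing in (\ref{decomp2}) (after refining $m$ so that $V$ sits inside a single grid box and $\Gbb$ equals a chosen representative). The characteristic function of $X(\Gbb,V)$ can be squeezed between continuous functions supported on slightly larger and slightly smaller cylinders, and the Birkhoff-type averages $I_n(-h+\Lb,\cdot)$ of those continuous functions converge, uniformly in $h$, to their $\mu$-integrals by unique ergodicity (Theorem~\ref{th-UE}). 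On the other hand, exactly the computation in the proof of Theorem~\ref{th-UE2} --- the estimate of $J_n(h,\Chi)$ by $\Vol(V)\,L_{\Gbbs}(h+F_n)/\Vol(F_n)$ up to van Hove boundary errors --- identifies the limit of $I_n(-h+\Lb,\Chi_{X(\Gbb,V)})$ as $\Vol(V)\cdot\freq(\Gbb,\Lb)$. Sandwiching gives $\mu(X(\Gbb,V))=\Vol(V)\cdot\freq(\Gbb,\Lb)$, and letting the cube decomposition refine gives the general Borel $V$.

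\textbf{Step 2: formulas (\ref{meas2}) and (\ref{meas3}).} Apply $\mu$ to the disjoint decomposition (\ref{decomp2}):
\[
\mu(X(U_{m,\alpha})) = \sum_{j=1}^\infty \mu\bigl(X(\Gbb_j^{(m,\alpha)},V_j^{(m,\alpha)})\bigr) + \mu(\wtil{X}_{m,\alpha}) = \sum_{j=1}^\infty \Vol(V_j^{(m,\alpha)})\cdot\freq(\Gbb_j^{(m,\alpha)},\Lb) + \mu(\wtil{X}_{m,\alpha}),
\]
using (\ref{meas1}) termwise (countable additivity of $\mu$ justifies interchanging $\mu$ with the infinite disjoint union). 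It remains to show $\mu(\wtil{X}_{m,\alpha})=0$. Here I would argue that $\wtil{X}_{m,\alpha}$ consists of Delone $\kappa$-sets whose cluster in $[-2^m,2^m)^d$ is admitted in the limit but not legal; since $\Lb$ is representable, legality of all $\Lb$-clusters (Theorem~\ref{legal-rep}) together with Corollary~\ref{cor-freq} shows every such non-legal cluster has frequency zero. The measure of the corresponding cylinder is controlled by that frequency: one bounds $\mu$ of the union over all non-legal clusters fitting in $[-2^m,2^m)^d$ --- a countable family --- by a sum of terms each $\le \Vol(V)\cdot\freq=0$, or, more robustly, one invokes the UCF hypothesis exactly as in Corollary~\ref{cor-tech} to conclude that the total mass carried by patches of high special-level is arbitrarily small, and patches admitted in the limit have unbounded special-level. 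Either route yields $\mu(\wtil{X}_{m,\alpha})=0$, and then (\ref{meas2}) follows from the displayed identity.

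\textbf{Main obstacle.} The delicate point is Step 1: the characteristic function $\Chi_{X(\Gbb,V)}$ is genuinely discontinuous --- its discontinuities sit precisely on the ``boundary'' locus where a point of $\Gbb$ is about to leave its grid box, i.e.\ where $t\in\partial V_j$ --- so the sandwiching of $\Chi$ between continuous functions must be arranged so that the error cylinders have $\mu$-measure tending to zero. This requires knowing that $\mu(\partial$-cylinders$)$ is small, which circularly needs a version of (\ref{meas1}) for thin shells $V^{+\delta}\setminus V^{-\delta}$; one breaks the circularity by first proving the sandwich estimate with Lebesgue-measure-of-$V$ bounds (valid unconditionally from the Theorem~\ref{th-UE2} computation and the van Hove property), which gives $\mu$ of a shell $\le (\Vol(V^{+\delta})-\Vol(V^{-\delta}))\cdot\sup_j\freq(\Gbb_j,\Lb) + o(1) \to 0$ as $\delta\to 0$. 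I expect this to be the only place requiring genuine care; everything else is bookkeeping with countable additivity and the van Hove property.
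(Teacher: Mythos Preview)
Your overall structure is right, but you are working much harder than necessary in Step~1 and there is a genuine gap in Step~2.

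\textbf{Part (i).} The ``main obstacle'' you describe --- approximating $\Chi_{X(\Gbbs,V)}$ by continuous functions and controlling the boundary shells --- is entirely avoidable. The paper bypasses it by applying the Birkhoff Ergodic Theorem directly to the characteristic function $\chi_{X(\Gbbs,V)}\in L^1(X_{\Lbs},\mu)$: since the system is (uniquely) ergodic, for $\mu$-a.e.\ $\Gb$ the orbit average of $\chi_{X(\Gbbs,V)}$ along $\{F_n\}$ converges to $\mu(X(\Gbb,V))$. One then computes this orbit average exactly as in the proof of Theorem~\ref{th-UE2}, obtaining $\Vol(V)\cdot\freq(\Gbb,\Gb)$ up to van Hove errors, and Corollary~\ref{cor-freq} replaces $\freq(\Gbb,\Gb)$ by $\freq(\Gbb,\Lb)$. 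No sandwiching, no regularity arguments, no reduction to grid cubes. Your route can be made to work, but it is circuitous, and the circularity you worry about simply does not arise in the paper's argument.

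\textbf{Part (ii).} Your displayed identity is correct and matches the paper. However, your two proposed routes to $\mu(\wtil{X}_{m,\alpha})=0$ both have problems. The first route (``a countable family of non-legal clusters, each with $\Vol(V)\cdot\freq=0$'') fails because the clusters admitted in the limit are not $\Lb$-clusters at all, so (\ref{meas1}) does not apply to them; moreover, in the ILC setting there is no reason this family should be countable. The second route via Corollary~\ref{cor-tech} does not directly bound $\mu(\wtil{X}_{m,\alpha})$ either, since the tail estimate (\ref{tech_cond}) concerns sums over the \emph{legal} clusters $\Gbb_j^{(m,\alpha)}$, and the admitted-in-the-limit configurations do not appear in that sum. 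The paper instead uses a short global argument: summing your displayed identity over $\alpha=1,\ldots,N_m$ and using $\sum_\alpha \mu(X(U_{m,\alpha}))=\mu(X_{\Lbs})=1$ on the left, while on the right the identity $\sum_\alpha \Chi_{m,\alpha}\equiv 1$ combined with (\ref{claim3}) and (\ref{lemeq}) gives $\sum_\alpha\sum_j \Vol(V_j^{(m,\alpha)})\,\freq(\Gbb_j^{(m,\alpha)},\Lb)=1$. Subtracting forces $\sum_\alpha \mu(\wtil{X}_{m,\alpha})=0$, hence each term vanishes. This ``both sides sum to $1$'' trick is the missing idea in your Step~2.
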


\begin{proof}
(i) Consider the characteristic function of $X(\Gbb,V)$, denoted by $\chi_{_{\scriptstyle{X}(\Gbbs,V)}}$. By the Birkhoff Ergodic Theorem, we have
$$
\mu(X(\Gbb,V)) = \lim_{n\to \infty} \frac{1}{\Vol(F_n)}\int_{F_n} \chi_{_{\scriptstyle{X}(\Gbbs,V)}}(\Gb-x)\,dx,
$$
for $\mu$-a.e.\ $\Gb\in X_{\Lbs}$. The right-hand side equals $\Vol(V)\cdot\freq(\Gbb,\Gb)$; this is shown using standard estimates based on the van Hove property, similarly to the proof of Theorem~\ref{th-UE2} above. In view of Corollary~\ref{cor-freq}, the desired claim follows.

(ii) In view of the disjoint union decomposition (\ref{decomp2}) and the already proved (\ref{meas1}),
$$
\mu(X(U_{m, \alpha})) = \sum_{j=1}^{\infty} \Vol(V_j^{(m,\alpha)}) \cdot \freq(\Gbb^{(m,\alpha)}_j, \Lb) + \mu(\wtil{X}_{m,\alpha}).
$$
Note that (\ref{decomp1}) implies $\sum_{\alpha=1}^{N_m} \mu(X(U_{m, \alpha})) = \mu(X_{\Lbs})=1$.
Again using (\ref{decomp1}), we obtain $\sum_{\alpha=1}^{N_m} \Chi_{m,\alpha} = \Chi_{_{\scriptstyle{X}_{\Lbs}}}\equiv 1$. Therefore,
$
\sum_{\alpha=1}^{N_m} J_n(h, \Chi_{m,\alpha}) = \Vol(F_n).
$
Now, summing (\ref{claim3}) over $\alpha$ and passing to the limit $n\to \infty$ (which we already know, thanks to (\ref{lemeq})), yields for all $m\ge m_0$,
$$
\sum_{\alpha =1}^{N_m} \sum_{j=1}^\infty  \Vol(V_j^{(m,\alpha)}) \cdot \freq(\Gbb^{(m,\alpha)}_j, \Lb) =1,
$$
and this yields both (\ref{meas2}) and (\ref{meas3}).
\end{proof}

\section{Ergodic-theoretic properties; eigenvalues} \label{relDenseEigenvalue-PisotFamily-Meyerset}

Let $\Lb$ be a representable primitive substitution Delone $\kappa$-set, and let $\mu$ be the unique invariant Borel probability measure for the associated dynamical system.
Further, fix $m\ge m_0$ and let
$X(U_{m, \alpha})$ be the cylinder sets from (\ref{def-Xma}). 
 For $\Lb = (\Lam_i)_{i=1}^\kappa$, let
$$
\Xi(\Lb) := \bigcup_{i=1}^\kappa (\Lam_i-\Lam_i)
$$
be the set of translation vectors between points of the $\kappa$-set of the same colour. 
We will also need the corresponding substitution tiling $\Tk=\Tk_{\Lbs}$. For a tiling $\Tk$ we define
\[\Xi(\Tk) =  \{x \in \R^d  : \ \exists \ T, T' \in \Tk,\ T'=T+x \}.\] 
Clearly,  $\Xi(\Tk_{\Lbs}) = \Xi(\Lb)$. Further, let
\[\Xi_{\rm legal}(\Tk) = \{x\in \R^d:\ \exists\ T,T'\in P\subset \Tk,\ P\ \mbox{is a legal patch},\ T'= T+x\}.\]
Of course, we have $\Xi_{\rm legal}(\Tk) =\Xi(\Tk)$ if $\Tk$ is  repetitive. We also let $\Xi_{\rm legal}(\Lb) = \Xi_{\rm legal}(\Tk_{\Lbs})$.
Let $\mu$ be the unique ergodic translation-invariant measure on $X_{\Lbs}$. 

\begin{lemma} \label{intersection-of-cylinderSet}
Let $\Lb$ be a  representable primitive substitution Delone $\kappa$-set. Let $m\ge m_0$ and $\alpha\in\{1,\ldots,N_m\}$.
For $z \in \Xi_{\rm legal}(\Lb)$, there exists $\delta = \delta(z)$  independent of $(m,\alpha)$ such that 
for all $n$ sufficiently large, $n\ge n(m,\alpha)$,
\be \label{new1-mix}
\mu(X(U_{m, \alpha}) \cap {\sf T}_{-Q^n z} X(U_{m, \alpha})) > \delta \mu(X(U_{m, \alpha}))\,.
\ee
\end{lemma}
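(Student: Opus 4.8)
The plan is to exploit the substitution self-similarity to ``pull back'' an occurrence of the patch determined by $U_{m,\alpha}$ through the $n$-th iterate of $\om$, using the vector $z\in\Xi_{\rm legal}(\Lb)$ to produce a second, nearby occurrence after rescaling by $Q^n$. First I would fix a legal patch $P$ witnessing $z$, i.e.\ $T,T'\in P\subset\Tk$ with $T'=T+z$; since $P$ is legal, it occurs as a subpatch of $\om^{k_0}(T_i)$ for some prototile $T_i$ and some $k_0$, and by primitivity $\om^{k_0+\ell}(T_j)$ contains a translate of $\om^{k_0}(T_i)$ for every $j$ once $\ell$ is large enough. Passing to the Delone $\kappa$-set picture via the representability correspondence, this says: there is a fixed cluster $\Gbb_\ast\subset\Lb$ containing two equal-coloured points $p,p'$ with $p'-p=z$, and a fixed order $K$ such that every order-$K$ supertile (equivalently, every $\Phi^K(\{x_j\})$) of $\Lb$ contains a translate of $\Gbb_\ast$. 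The key geometric observation is then that if a cluster $\Gbb$ sits inside a grid-admissible configuration counted by $U_{m,\alpha}$ at scale $2^m$, and we look at the order-$n$ supertile structure, the two points $p,p'$ inside a translate of $\Gbb_\ast$ inside a supertile get mapped, after applying $\om^n$ (i.e.\ rescaling interior vectors by $Q^n$), to two points differing by exactly $Q^n z$; one then checks that $Q^n z$ conjugates one occurrence of the $U_{m,\alpha}$-configuration to another, up to wiggle room that is controlled by a $\delta$ depending only on the ratio of available wiggle volumes — and crucially not on $(m,\alpha)$, because the wiggle volume bound $\Vol(V_j^{(m,\alpha)})\le 2^{-md}$ and the lower frequency bounds both scale the same way.

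More concretely, the strategy for the measure estimate is: write $\mu(X(U_{m,\alpha}))=\sum_j \Vol(V_j^{(m,\alpha)})\,\freq(\Gbb_j^{(m,\alpha)},\Lb)$ by the Proposition (formula (\ref{meas2})). For each $j$, I would like to show that a definite fraction (bounded below by $\delta=\delta(z)$) of the occurrences of $\Gbb_j^{(m,\alpha)}$ are ``$z$-extendable at level $n$'': namely, there is a translate of $\Gbb_j^{(m,\alpha)}$ sitting inside a supertile of order $n$ in such a way that translating the whole picture by $Q^n z$ lands another copy of $\Gbb_j^{(m,\alpha)}$ (within the same small grid boxes, so in the same $U_{m,\alpha}$). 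The existence of such configurations is guaranteed because, by the van Hove / Perron–Frobenius frequency machinery of Theorem~\ref{th-UCF1}, inside a large supertile of order $n$ the relative frequency of (a translate of) the fixed cluster $\Gbb_\ast$ is bounded below by a positive constant $c_\ast=\freq(\Gbb_\ast,\Lb)\cdot V_{\max}$-type quantity independent of $n$, and each occurrence of $\Gbb_\ast$ carries the vector $z$ at the base level, which becomes $Q^n z$ after expansion. Intersecting the event ``$\Gbb_j^{(m,\alpha)}$ occurs here'' with ``this occurrence lies in an order-$n$ supertile at a location admitting the $\Gbb_\ast$-translate whose $z$-shift stays in the grid box'' produces the factor $\delta$. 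Summing over $j$ and using (\ref{meas2}) again for the shifted cylinder (translation-invariance of $\mu$ gives $\mu({\sf T}_{-Q^nz}X(U_{m,\alpha}))=\mu(X(U_{m,\alpha}))$, but we need the intersection, not just the image), I get (\ref{new1-mix}).

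The main obstacle I anticipate is controlling the intersection rather than just each cylinder separately: I must produce, with positive density, single Delone $\kappa$-sets $\Gb\in X_{\Lbs}$ that lie in $X(U_{m,\alpha})$ \emph{and} in ${\sf T}_{-Q^nz}X(U_{m,\alpha})$ simultaneously. The clean way is to work in the tiling picture, where the hierarchical (supertile) decomposition of $\Tk$ is fixed: find a legal patch $R$ of diameter $\approx 2^m + |Q^nz|$ such that $R$ contains two sub-configurations, one realizing the $U_{m,\alpha}$-pattern around a point $q$ and another realizing it around $q+Q^nz$; then $\freq(R,\Tk)>0$ by Theorem~\ref{th-UCF1}(i), and every $\Gb$ whose cluster at an appropriate window equals a translate of (the Delone-set version of) $R$ lies in the intersection. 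Producing such an $R$ is exactly where the supertile-of-order-$n$ plus embedded-$\Gbb_\ast$ argument is used, and where the uniformity of $\delta$ in $(m,\alpha)$ must be extracted: the fraction of $U_{m,\alpha}$-occurrences that extend is $\ge (\text{frequency of }\Gbb_\ast\text{ inside an }n\text{-supertile})\times(\text{fraction of wiggle room preserved})$, and both factors admit lower bounds independent of $(m,\alpha)$ — the first by Perron–Frobenius, the second because the wiggle sets $V_j^{(m,\alpha)}$ are products of intervals of length $\le 2^{-m}$ and the $Q^n$-expanded shift moves points by a fixed vector, so a fixed-proportion subinterval always survives. The boundary-effect estimates (patches crossing supertile boundaries having negligible count, via the van Hove property of $\{Q^nA_i\}$, exactly as in the proof of Theorem~\ref{th-UE3}) will be routine and can be absorbed by enlarging $n(m,\alpha)$.
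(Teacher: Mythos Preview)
Your overall strategy---use the legal vector $z$ to find two tiles $T_\ell,\,T_\ell+z$ inside some $\om^{k_0}(T_i)$, then observe that $\om^{n+k_0}(T_i)$ contains $\om^n(T_\ell)\cup(\om^n(T_\ell)+Q^nz)$, so every occurrence of a pattern inside $\om^n(T_\ell)$ yields an occurrence of the ``doubled'' pattern shifted by $Q^nz$, and finish with Perron--Frobenius---is exactly the paper's approach. The $\delta$ that emerges is $\frac14\,r_\ell\,\Vol(A_\ell)\,|\det Q|^{-k_0}$, manifestly independent of $(m,\alpha)$.

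However, your handling of the cylinder-set intersection is confused, and the ``fraction of wiggle room preserved / fixed-proportion subinterval always survives'' argument is wrong as stated. Translating $V_j^{(m,\alpha)}$ by the large vector $Q^nz$ gives a set disjoint from $V_j^{(m,\alpha)}$, so no proportion survives in that sense. What you are missing is the clean reduction the paper uses: for any cluster $\Gbb$ and any $w\in\R^d$,
\[
X(\Gbb,V)\ \cap\ {\sf T}_{-w}X(\Gbb,V)\ \supset\ X\bigl(\Gbb\cup(w+\Gbb),\,V\bigr),
\]
because if $-t+(\Gbb\cup(w+\Gbb))\subset\Gb$ for some $t\in V$, then both $-t+\Gbb\subset\Gb$ and $-t+\Gbb\subset\Gb-w$, using the \emph{same} wiggle $t$. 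There is no need to match wiggles from two separate occurrences. Applying this with $w=Q^nz$ and summing over $j$ gives
\[
\mu\bigl(X(U_{m,\alpha})\cap{\sf T}_{-Q^nz}X(U_{m,\alpha})\bigr)\ \ge\ \sum_{j}\Vol(V_j)\cdot\freq\bigl(\Gbb_j\cup(Q^nz+\Gbb_j),\Lb\bigr),
\]
and now the problem is purely a frequency lower bound for the doubled cluster, which your supertile/Perron--Frobenius argument handles correctly. The passage from ``$\liminf_n$ of the frequency ratio is $\ge 4\delta$ for each fixed $j$'' to a statement valid for all large $n$ uniformly over the infinite sum is done by truncating: choose $J$ so that $\sum_{j>J}\Vol(V_j)\,\freq(\Gbb_j,\Lb)<\delta\sum_{j\le J}\Vol(V_j)\,\freq(\Gbb_j,\Lb)$, then take $n\ge\max\{n(\Gbb_1),\ldots,n(\Gbb_J)\}$. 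This is where $n(m,\alpha)$ comes from.
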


\proof Recall the decomposition (\ref{decomp2}):
$$
 X(U_{m, \alpha}) = \bigsqcup_{j=1}^{\infty} X(\Gbb^{(m,\alpha)}_j, V^{(m,\alpha)}_j) \ \bigsqcup \wtil{X}_{m,\alpha}.
$$
We fix $(m,\alpha)$ and drop the superscripts, writing $\Gbb_j:= \Gbb^{(m,\alpha)}_j$ and $V_j:= V^{(m,\alpha)}_j$ to simplify the notation.
Notice that 
$$
X\bigl(\Gbb_j, V_j\bigr) \cap \Bigl(z+X\bigl(\Gbb_j, V_j\bigr)\Bigr) \supset  X\bigl(\Gbb_j \cup (z+\Gbb_j), V_j\bigr).
$$
Thus 
\begin{eqnarray}
 \mu(X(U_{m, \alpha}) \cap \Gamma_{-Q^n z} X(U_{m, \alpha})) & \ge & \sum_{j=1}^\infty \mu (X(\Gbb_j \cup (z+\Gbb_j), V_j)) \nonumber \\
 & = & \sum_{j=1}^{\infty} \Vol(V_j)\cdot \freq(\Gbb_j \cup (Q^n z + \Gbb_j), \Lb)\,, \label{new2-mix}
\end{eqnarray}
in view of (\ref{meas1}).

As before, it is easier to estimate frequencies for the corresponding substitution tiling $\Tk=\Tk_{\Lbs}$. The argument is similar to that of \cite{soltil}, but we show the details for completeness.
Let $P_j = \Gbb_j+\Ak$ be the $\Tk$-patch corresponding to the $\Lb$-cluster $\Gbb_j$.
Fix $T = (A_k, k)$,  a $\Tk$-tile of (any) type $k$.
We have
\[ \freq(P_j \cup (Q^n z + P_j), \Tk) = \lim_{N \to \infty} \frac{L_{P_j \cup (Q^n z + P_j)}(Q^N A_k)}{\Vol(Q^N A_k)}\,.\]
Since $z \in \Xi_{\rm legal}(\Lb)=\Xi_{\rm legal}(\Tk)$, there exist $\Tk$-tiles $T_\ell, T'_\ell$ in the same legal patch, such that $T'_\ell = T_\ell + z$. Fix another $\Tk$-tile, $T_i$ of type $i$.
By legality and primitivity, there exists $k_0 \in \N$ such that $\om^{k_0}(T_i) \supset \{T_\ell, T_\ell + z \}$.
Then for any $n \in \N$ we have $\om^{n+k_0} (T_i)  \supset \om^n(T_\ell) \cap (\om^n(T_\ell) + Q^n z)$.
Thus for any $\Tk$-patch $P$ in $\om^n(T_\ell)$ equivalent to $P_j$, the patch $P \cup ( Q^n z + P)$ is in $\om^{n+ k_0} (T_i)$.
It follows that there are at least $L_{P_j}(Q^n A_\ell)$ patches in $\om^{n+k_0} (T_i)$ which are equivalent to 
$P_j \cup (Q^n z + P_j)$; in other words,
$$
L_{P_j \cup (Q^n z + P_j)} (Q^{n+k_0} A_i) \ge L_{P_j}(Q^n A_\ell).
$$
Therefore, 
for $N > n + k_0$,
\[ L_{P_j \cup (Q^n z + P_j)} (Q^N A_k) \ge L_{\{T_\ell\}} (Q^{N-n-k_0} A_k) L_{P_j}(Q^n A_\ell)\,.\]
By the definition of the substitution matrix $\Sf$ we have
\[ L_{\{T_\ell\}} (Q^{N-n-k_0} A_k) = (\Sf^{N - n - k_0})_{\ell k}\,.\]
So for any $n \in \N$ and $N > n+k_0 $,
\begin{eqnarray*}
\ \freq(P_j \cup (Q^n z + P_j), \Tk) 
& \ge &  \lim_{N\to \infty} \frac{(\Sf^{N - n - k_0})_{\ell k} \,L_{P_j}(Q^n A_\ell)}{|\det Q|^N \,\Vol(A_k)} 
\end{eqnarray*}
It follows from the Perron-Frobenius Theorem (see \cite[Cor.\,2.4]{soltil}) that
\begin{eqnarray}  \label{eigenvector-substitution}
 \lim_{N \to \infty} \frac{(\Sf^{N - n -k_0})_{\ell  k }}{\Vol(Q^N A_k)} = r_\ell |\det Q|^{-n-k_0}\,,
\end{eqnarray}
where $(r_i)_{i \le \kappa}$ is the right Perron-Frobenius eigenvector of $\Sf$ such that $\sum_{i=1}^{\kappa} r_i \Vol(A_i) = 1$.
Thus 
\begin{eqnarray*}
\freq(P_j \cup (Q^n z + P_j), \Tk) 
& \ge  &  |\det Q|^{-n-k_0} r_\ell  \cdot L_{P_j}(Q^n A_\ell), 
\end{eqnarray*}
Since \[  \freq(P_j, \Tk) = \lim_{n \to \infty} \frac{L_{P_j}(Q^n A_\ell)}{{|\det Q|^n\Vol(A_\ell)} },\] 
we obtain
\[ \liminf_{n \to \infty} \frac{\freq(P_j \cup (Q^n z + P_j), \Tk)}{\freq(P_j, \Tk)} \ge r_\ell \Vol(A_\ell)\cdot |\det Q|^{-k_0}\,.\]

\smallskip

Let $\delta = \frac{1}{4} r_\ell\Vol(A_\ell)\cdot  |\det Q|^{-k_0}$, which is independent of $P_j$, as well as of $(m,\alpha)$.
It follows from the above that for $n =n(P_j) \in \N$ sufficiently large,
 \[ \freq(P_j \cup (Q^n z + P_j), \Tk) > 2 \delta \cdot \freq(P_j, \Tk) \,. \]
Returning to the substitution Delone $\kappa$-set $\Lb$, we obtain that for $n =n(\Gbb_j) \in \N$ sufficiently large,
 \[ \freq(\Gbb_j \cup (Q^n z + \Gbb_j), \Lb) > 2 \delta \cdot \freq(\Gbb_j, \Lb) \,. \]
We can choose $J \in \N$ such that 
\[ \sum_{j = J+1}^{\infty} \Vol(V_j) \cdot \freq(\Gbb_j, \Lb) < \delta \sum_{j =1}^{J} \Vol(V_j) \cdot \freq(\Gbb_j, \Lb). \]
For any $n \ge \max\{n(\Gbb_1), \dots, n(\Gbb_J)\}$,
\[ \sum_{j = 1}^{J} \Vol(V_j)\cdot \freq(\Gbb_j \cup (Q^n x + \Gbb_j), \Lb) > 2 \delta  \sum_{j =1}^{J} \Vol(V_j) \cdot \freq(\Gbb_j, \Lb).\]
Thus 
\[ \sum_{j = 1}^{\infty} \Vol(V_j) \cdot \freq(\Gbb_j \cup (Q^n z + \Gbb_j), \Lb) > \delta \sum_{j =1}^{\infty} \Vol(V_j) \cdot \freq(\Gbb_j, \Lb).\]
Recalling (\ref{new2-mix}) and
\[ \mu(X(U_{m, \alpha})) = \sum_{j=1}^{\infty} \Vol(V_j) \cdot \freq(\Gbb_j, \Tk) \,,
\]
we obtain (\ref{new1-mix}),
as desired.
\qed

\begin{cor} \label{cor-mix}
Let $\Lb$ be  a primitive representable substitution Delone $\kappa$-set. Then the associated uniquely ergodic measure-preserving system is not strongly mixing.
\end{cor}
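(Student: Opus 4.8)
The plan is to deduce Corollary~\ref{cor-mix} directly from Lemma~\ref{intersection-of-cylinderSet} by contradiction. Suppose the system $(X_{\Lbs},\R^d,\mu)$ were strongly mixing. First I would choose a legal cluster (equivalently, a tile pair) giving a nonzero vector $z\in \Xi_{\rm legal}(\Lb)$; such a $z$ exists because the prototile set has at least one tile and its first-level supertile $\om(T_i)$ contains at least two tiles, so some nonzero translation vector between equivalent tiles occurs in a legal patch. Fix any $m\ge m_0$ and any $\alpha$ with $\mu(X(U_{m,\alpha}))>0$; such $\alpha$ exists since the sets $X(U_{m,\alpha})$ partition $X_{\Lbs}$, see (\ref{decomp1}).

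Next, apply the definition of strong mixing to the Borel set $A=X(U_{m,\alpha})$: since $Q^n z\to\infty$ in $\R^d$ as $n\to\infty$ (because $Q$ is expansive, so $\|Q^n z\|\to\infty$ for $z\ne 0$), strong mixing gives
\[
\mu\bigl(X(U_{m,\alpha})\cap {\sf T}_{-Q^n z}X(U_{m,\alpha})\bigr)\ \longrightarrow\ \mu(X(U_{m,\alpha}))^2,\qquad n\to\infty.
\]
On the other hand, Lemma~\ref{intersection-of-cylinderSet} provides $\delta=\delta(z)>0$, independent of $(m,\alpha)$, with
\[
\mu\bigl(X(U_{m,\alpha})\cap {\sf T}_{-Q^n z}X(U_{m,\alpha})\bigr)\ >\ \delta\,\mu(X(U_{m,\alpha}))
\]
for all sufficiently large $n$. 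Combining the two displays, letting $n\to\infty$, we get $\mu(X(U_{m,\alpha}))^2\ge \delta\,\mu(X(U_{m,\alpha}))$, hence $\mu(X(U_{m,\alpha}))\ge \delta$ for every $m\ge m_0$ and every $\alpha$ with positive measure.

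Now I would derive the contradiction from the fact that $\delta$ is uniform in $(m,\alpha)$ while the cylinder sets shrink. By (\ref{Vbound}) and (\ref{meas2}) we have $\mu(X(U_{m,\alpha}))\le \sum_j \Vol(V_j^{(m,\alpha)})\freq(\Gbb_j^{(m,\alpha)},\Lb)\le 2^{-md}\sum_j\freq(\Gbb_j^{(m,\alpha)},\Lb)$; more simply, summing over $\alpha$ gives $\sum_{\alpha=1}^{N_m}\mu(X(U_{m,\alpha}))=1$, and since by Lemma~\ref{lem-ezer}-type counting the number of $\alpha$ with $\mu(X(U_{m,\alpha}))>0$ grows with $m$ (equivalently: each individual cylinder has measure at most $2^{-md}\cdot V_{\min}^{-1}\Vol([-2^{m+1},2^{m+1}]^d)$, which tends to $0$ as $m\to\infty$). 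Choosing $m$ large enough that every nonempty $X(U_{m,\alpha})$ has measure $<\delta$ contradicts the lower bound $\mu(X(U_{m,\alpha}))\ge\delta$ just obtained. Therefore the system cannot be strongly mixing.

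The only mildly delicate point is justifying that $\min_{\alpha:\,\mu(X(U_{m,\alpha}))>0}\mu(X(U_{m,\alpha}))\to 0$ as $m\to\infty$ — equivalently, that individual cylinder measures are uniformly small for large $m$. This follows from the fact that a $\kappa$-set in $X_{\Lbs}$ has at most $V_{\min}^{-1}\Vol(A)$ points of $\supp$ in a region $A$ (by the uniform discreteness built into being a Delone $\kappa$-set, or via Lemma~\ref{lem-ezer} for the associated tiling), so the "probability" of landing in any single admissible configuration on $[-2^m,2^m)^d$ decays like the reciprocal of the number of available slots, which blows up with $m$. I expect this quantitative decay step to be the main obstacle, but it is essentially a volume count and requires no new ideas beyond those already in the proofs of Theorem~\ref{th-UE3} and the Proposition above.
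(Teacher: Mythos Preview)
Your approach is essentially the paper's: assume strong mixing, apply it to $X(U_{m,\alpha})$ along the sequence $Q^n z$ for a nonzero $z\in\Xi_{\rm legal}(\Lb)$, combine with Lemma~\ref{intersection-of-cylinderSet} to get $\mu(X(U_{m,\alpha}))\ge\delta$ whenever positive, and contradict this by producing $(m,\alpha)$ with $0<\mu(X(U_{m,\alpha}))<\delta$. The paper handles this last step in one line, using $\sum_{\alpha=1}^{N_m}\mu(X(U_{m,\alpha}))=1$ together with $N_m\to\infty$.

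Your attempt to make the last step quantitative contains an arithmetic slip: the bound you write, $2^{-md}\cdot V_{\min}^{-1}\Vol([-2^{m+1},2^{m+1}]^d)=V_{\min}^{-1}\cdot 2^{2d}$, is a constant independent of $m$ and does \emph{not} tend to $0$. The surrounding heuristic (``probability decays like the reciprocal of the number of slots'') is not made precise either. The clean fix is simply the paper's pigeonhole, or---if one wants to be fully rigorous about getting a \emph{positive} measure below $\delta$---to note that $\mu$ is non-atomic (it is invariant under a continuous $\R^d$-action with no fixed points) and that the diameters of the $X(U_{m,\alpha})$ tend to $0$ as $m\to\infty$, which forces $\max_\alpha \mu(X(U_{m,\alpha}))\to 0$.
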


\begin{proof}
Let $z\in \Xi_{\rm legal}(\Lb)$, $z\ne 0$. If the measure-preserving system were mixing, we would have
\be \label{mix}
\mu(X(U_{m, \alpha}) \cap{\sf T}_{-Q^n z} X(U_{m, \alpha})) \to \mu(X(U_{m, \alpha}))^2,\ \ \ \mbox{as}\ \ n\to \infty.
\ee
It follows from the definition of $U_{m,\alpha}$ that $\lim_{m\to \infty} N_m = \infty$. Recall that $\sum_{\alpha=1}^{N_m} \mu(X(U_{m,\alpha}))=1$. Thus there exists $(m,\alpha)$ such that
$0< \mu(X(U_{m,\alpha}))<\delta$ (note that $\delta>0$ is uniform in $(m,\alpha)$), and we obtain a contradiction between (\ref{mix}) and 
Lemma~\ref{intersection-of-cylinderSet}.
\end{proof}

 
Recall that $\Kk(\Tk) = \{ x \in \R^d : \Tk -x = \Tk \}$ is the group of periods of a tiling $\Tk$.


\begin{theorem}  \label{eigen-thm}
Let $\Tk$ be a  primitive  substitution tiling with expansion map $Q$ and tile-substitution $\om$. 

{\rm (i)}
If $\alpha \in \R^d$ is an eigenvalue of the measure-preserving system 
$(X_{\Tk}, \R^d, \mu)$ then
\begin{eqnarray} \label{eigenvalue-formula}
\lim_{n \to \infty} e^{2\pi i \langle Q^n z, \alpha \rangle} = 1  \ \ \ \mbox{for all $z \in \Xi_{\rm legal}(\Tk)$},  
\end{eqnarray}
and 
\be \label{period-cond}
e^{2\pi i \langle g, \alpha \rangle} = 1 \ \ \ \mbox{for all $g \in \mathcal{K}$}.
\ee

{\rm (ii)} Suppose, in addition, that $\Tk$ is repetitive, the tile-substitution $\om$ is recognizable modulo periods, and all the eigenvalues of $Q$ are algebraic integers. If (\ref{eigenvalue-formula}) and (\ref{period-cond}) hold, then $\alpha$ is an eigenvalue, and the eigenfunction can be chosen continuous.
\end{theorem}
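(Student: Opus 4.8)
The plan is to prove the two parts separately, using Lemma~\ref{intersection-of-cylinderSet} for the necessity and a continuous-eigenfunction construction for the sufficiency. For part (i), let $\alpha$ be an eigenvalue with (measurable, hence by a standard argument for uniquely ergodic minimal systems, continuous on a dense $G_\delta$, but here we only need measurable) eigenfunction $f$, $|f|\equiv 1$, so that $f({\sf T}_g \Tk') = e^{2\pi i\langle g,\alpha\rangle} f(\Tk')$ for $\mu$-a.e.\ $\Tk'$ and all $g$. Fix $z\in\Xi_{\rm legal}(\Tk)$. The idea is that if $e^{2\pi i\langle Q^n z,\alpha\rangle}$ does not tend to $1$, then along a subsequence $e^{2\pi i\langle Q^n z,\alpha\rangle}\to c\ne 1$, and then for tilings $\Tk'$ lying in $X(U_{m,\alpha})\cap {\sf T}_{-Q^n z}X(U_{m,\alpha})$ both $f(\Tk')$ and $f({\sf T}_{Q^n z}\Tk') = e^{2\pi i\langle Q^n z,\alpha\rangle}f(\Tk')$ are controlled; choosing $(m,\alpha)$ with a tiny cylinder (as in the proof of Corollary~\ref{cor-mix}) on which $f$ is nearly constant (possible since cylinders shrink in diameter and $f$ is approximable by continuous functions) forces $f(\Tk')\approx c\,f(\Tk')$ on a set of positive measure bounded below by $\delta\mu(X(U_{m,\alpha}))>0$, a contradiction. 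The period condition (\ref{period-cond}) is immediate: if $g\in\Kk$ then ${\sf T}_g\Tk=\Tk$, so $f(\Tk) = e^{2\pi i\langle g,\alpha\rangle}f(\Tk)$, and $f(\Tk)\ne 0$.

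For part (ii), under repetitivity and recognizability modulo periods, the plan is to construct a continuous eigenfunction directly, following the method of \cite{sol-eigen}. First, recognizability modulo periods gives a well-defined ``desubstitution'': every $\Tk'\in X_\Tk$ has, up to a period in $\Kk$, a unique preimage under $\om$, so we get the hierarchical address of the origin — for each level $k$ a supertile $Q^k T^{(k)}$ of $\Tk'$ containing $0$, together with the location of $0$ inside it. I would define, for a fixed control point in each prototile, the vector $z_k(\Tk')$ from the origin to the control point of the order-$k$ supertile containing it; the increments $z_{k}(\Tk') - Q z_{k-1}(\,\cdot\,)$ take finitely many values of the form ``(control point difference) $\in \Xi_{\rm legal}$-type vectors''. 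The candidate eigenfunction is $f(\Tk') := \lim_{k\to\infty} e^{2\pi i\langle z_k(\Tk'),\alpha\rangle}$ (suitably normalized), and one must show: the limit exists — this is where (\ref{eigenvalue-formula}) enters, since $\langle Q^k z,\alpha\rangle$ being asymptotically integral for the finitely many ``defect'' vectors $z$ makes the telescoping series $\sum_k(\langle z_{k+1},\alpha\rangle - \langle Q z_k,\alpha\rangle)$ converge modulo $\Z$; the convergence is uniform, so $f$ is continuous; $f$ is not identically zero; and $f({\sf T}_g\Tk') = e^{2\pi i\langle g,\alpha\rangle}f(\Tk')$, which follows because translating by $g$ changes $z_k$ by $g$ plus, for large $k$, a vector in $Q^k\Xi_{\rm legal}$ whose contribution vanishes in the limit by (\ref{eigenvalue-formula}). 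The period condition (\ref{period-cond}) guarantees $f$ is well-defined despite the $\Kk$-ambiguity in desubstitution. For the ``algebraic integer'' hypothesis on the eigenvalues of $Q$: this is used to upgrade (\ref{eigenvalue-formula}) — which only says $\langle Q^n z,\alpha\rangle\to 0 \pmod\Z$ — to a quantitative (summable) rate $\|\langle Q^n z,\alpha\rangle\|_{\R/\Z} = O(\theta^n)$ for some $\theta<1$, via a companion-matrix / trace argument (the sequence $n\mapsto\langle Q^n z,\alpha\rangle$ satisfies a linear recurrence with integer coefficients, so its distance to $\Z$ decays geometrically once it tends to $0$); this summability is exactly what makes the telescoping series defining $f$ converge uniformly.

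The main obstacle I expect is part (ii): organizing the hierarchical-address machinery cleanly in the ILC setting, where the ``defect'' vectors (differences of control points between a tile and the supertile containing it, across the finitely many adjacency types) form a finite set only because there are finitely many prototiles, and checking that $\Xi_{\rm legal}$ is precisely the right set to impose the condition (\ref{eigenvalue-formula}) on — i.e.\ that every defect vector arising in a legal hierarchy lies in $\Xi_{\rm legal}(\Tk)$, and conversely that (\ref{eigenvalue-formula}) for all of $\Xi_{\rm legal}$ suffices. The recognizability-modulo-periods subtlety (as opposed to strict recognizability) requires care exactly at the point of well-definedness, which is why (\ref{period-cond}) is a hypothesis rather than a conclusion in part (ii). The geometric-decay step using the algebraic-integer hypothesis is technically the crux of making the series converge, but it is a known type of argument (cf.\ the Pisot-number heuristics in \cite{soltil,sol-eigen}) and I would cite it rather than reprove it in detail.
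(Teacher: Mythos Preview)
Your part (i) matches the paper's approach exactly: approximate the eigenfunction by linear combinations of the characteristic functions $\Chi_{m,\alpha}$ and use Lemma~\ref{intersection-of-cylinderSet} to force (\ref{eigenvalue-formula}); the period condition (\ref{period-cond}) follows directly from the eigenfunction equation (just replace $\Tk$ by a generic $\Tk'$, since every $g\in\Kk(\Tk)$ is a period of every element of $X_\Tk$).

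For part (ii) your route is different from the paper's. You build $f$ globally on $X_\Tk$ from the hierarchical address of the origin and prove convergence of the telescoping series; the paper instead defines $f_\alpha(\Tk-x):=e^{2\pi i\langle x,\alpha\rangle}$ on the dense orbit of $\Tk$ and proves it is \emph{uniformly continuous}, hence extends to $X_\Tk$. The uniform-continuity argument is essentially your desubstitution scheme run backwards: if $\Tk-x$ and $\Tk-y$ agree on $B_R(0)$, apply recognizability modulo periods $n$ times (picking up period vectors absorbed by (\ref{period-cond})) to get $x-y\equiv Q^n z\pmod{\Kk}$ with $z\in\Xi(\Tk)$; repetitivity gives $\Xi(\Tk)=\Xi_{\rm legal}(\Tk)$, and then (\ref{eigenvalue-formula}) plus geometric decay finishes. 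Your construction makes continuity immediate once the limit exists, but you then have to check the eigenfunction equation and sort out whether the finitely many ``defect'' vectors (essentially elements of the digit sets $\Dk_{ij}$) lie in the group generated by $\Xi_{\rm legal}(\Tk)$; the paper's organization sidesteps that bookkeeping, since the vector $z$ it produces is by construction a return vector.

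One genuine correction: your heuristic for the geometric decay is wrong. The hypothesis is only that the eigenvalues of $Q$ are algebraic integers, \emph{not} that $Q$ has integer characteristic polynomial, so the sequence $n\mapsto\langle Q^n z,\alpha\rangle$ need not satisfy a linear recurrence with integer coefficients, and no companion-matrix/trace argument applies. The actual mechanism (the paper's Lemma~\ref{lem-Pisot}) is to decompose $z$ in the eigenbasis of $Q$, reducing (\ref{eigenvalue-formula}) to $\bigl\|\sum_i P_i(n)\theta_i^n\bigr\|_{\R/\Z}\to 0$ with the $\theta_i$ algebraic integers, and then invoke the Pisot-family theorem of K\"ornei \cite{Korn} and Mauduit \cite{Mauduit}: the contributing $\theta_i$ must form a Pisot family, and the convergence is geometric with rate governed by the excluded conjugates. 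Since you intend to cite this step anyway, this is a matter of citing the correct tool rather than a structural gap in your plan.
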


The following is an immediate consequence of the theorem.

\begin{cor} \label{cor-eigen}
Suppose that
$\Tk$ is a repetitive primitive  non-periodic substitution tiling with  recognizable tile-substitution and linear expansion map $Q$, all of whose eigenvalues are algebraic integers. Then $\alpha\in \R^d$ is an eigenvalue of the
measure-preserving system $(X_\Tk,\R^d,\mu)$ if and only if the property (\ref{eigenvalue-formula}) holds. 
\end{cor}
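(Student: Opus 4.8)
The plan is to obtain Corollary~\ref{cor-eigen} as a direct specialization of Theorem~\ref{eigen-thm}, the only work being to observe that the two hypotheses of the theorem that do not appear verbatim in the corollary are automatically satisfied in the present setting. First I would note that recognizability implies recognizability modulo periods: if $\om$ is one-to-one and $\om(\Sk)=\om(\Sk')$, then $\Sk'=\Sk=\Sk-Q^{-1}\cdot 0$ with $0\in\Kk(\Tk)$, so the extra hypothesis of Theorem~\ref{eigen-thm}(ii) holds. Second, non-periodicity of $\Tk$ means precisely that $\Kk=\Kk(\Tk)=\{0\}$, so the condition (\ref{period-cond}) reduces to $e^{2\pi i\langle 0,\alpha\rangle}=1$, which is always true; hence (\ref{period-cond}) imposes no constraint on $\alpha$ and may be dropped from both parts of the theorem.

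With these two remarks the corollary follows at once. The forward implication is exactly Theorem~\ref{eigen-thm}(i): if $\alpha$ is an eigenvalue, then (\ref{eigenvalue-formula}) holds (the companion conclusion (\ref{period-cond}) being vacuously true under non-periodicity). For the converse, one checks that the hypotheses of Theorem~\ref{eigen-thm}(ii) are all in force---$\Tk$ primitive and repetitive, $\om$ recognizable hence recognizable modulo periods, and all eigenvalues of $Q$ algebraic integers---and that (\ref{period-cond}) holds automatically, so that (\ref{eigenvalue-formula}) alone suffices to conclude that $\alpha$ is an eigenvalue, with continuous eigenfunction.

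I do not expect any substantive obstacle here: the corollary is purely a matter of recording which hypotheses of Theorem~\ref{eigen-thm} become trivial once $\Kk(\Tk)=\{0\}$ and $\om$ is one-to-one. The only point that needs a moment of care is the bookkeeping between ``recognizable'' and ``recognizable modulo periods'' in Definition~\ref{def-recog}; once that is noted, the argument takes only a few lines.
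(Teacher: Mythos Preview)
Your proposal is correct and matches the paper's own treatment: the paper simply declares the corollary ``an immediate consequence of the theorem,'' and you have spelled out exactly the two observations that make it so---recognizability trivially implies recognizability modulo periods, and non-periodicity makes (\ref{period-cond}) vacuous.
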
 


\medskip

\noindent {\bf Question.} {\em Let $Q$ be the expansion map of a substitution tiling with a finite set of prototiles up to translation. Is it necessarily true that all the eigenvalues of $Q$ are algebraic integers?
}

\medskip

In the FLC case the answer is positive, and the proof is not hard \cite{Thur} (see also \cite[Cor.\,4.2]{LeeSol:08}). Observe that even without FLC we have that $|\det(Q)|$ is an algebraic integer (a Perron number in the primitive substitution case),
since it is the dominant eigenvalue of the substitution matrix, corresponding to the left eigenvector whose components are the volumes of the prototiles. Thus, in particular, if $Q$ is a pure dilation $Q(x) = \theta x$, then $\theta$ is necessarily an algebraic integer.

\medskip

Before proceeding with the proof of Theorem~\ref{eigen-thm}, we need the following.

\begin{defi}
{\em A set of
algebraic integers $\Theta = \{\theta_1, \cdots, \theta_r \}$ is a
{\em Pisot family} if for any $1 \le j \le r$, every Galois
conjugate $\gamma$ of $\theta_j$, with $|\gamma| \ge 1$, is
contained in $\Theta$. For $r=1$, with $\theta_1$ real and $|\theta_1|>1$, this reduces to $|\theta_1|$ being a real Pisot number, and for $r=2$, with $\theta_1$ non-real and $|\theta_1|>1$, to $\theta_1$ being a complex Pisot number. 
Following \cite{Robi.lec}, we say that a family of algebraic integers is {\em totally non-Pisot} if it does not have a subset which is a Pisot family.
}
\end{defi}

In the course of the proof of Theorem~\ref{eigen-thm}, we will prove the following.

\begin{cor} \label{cor-Pisot-fam}
Let $\Tk$ be a primitive  substitution tiling with expansion map $Q$, whose eigenvalues are algebraic integers.

{\rm (i)} Suppose that $\alpha\in \R^d$ is an eigenvalue of the measure-preserving system $(X_\Tk,\R^d,\mu)$. Let $\Theta=\{\theta_1,\ldots,\theta_r\}$ be the set of eigenvalues of $Q$  (real and complex) such that the corresponding eigenvectors satisfy
$$
\langle \vec{e}_j,\alpha\rangle \ne 0.
$$
Then $\Theta$ is a Pisot family.

{\rm (ii)} Suppose that the set of eigenvalues of $Q$ is totally non-Pisot. Then $(X_\Tk,\R^d,\mu)$ is weakly mixing.
\end{cor}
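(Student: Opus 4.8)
The plan is to derive both parts from Theorem~\ref{eigen-thm}(i), which converts the hypothesis ``$\alpha$ is an eigenvalue'' into the number-theoretic condition
$$\lim_{n\to\infty}e^{2\pi i\langle Q^n z,\alpha\rangle}=1,\qquad z\in\Xi_{\rm legal}(\Tk),$$
and then to feed a well-chosen $z$ into a Pisot-family lemma for sums of powers of algebraic integers. First I would diagonalize $Q$ over $\C$ (or put it in Jordan form): fix a basis $\vec e_1,\dots,\vec e_d$ of $\C^d$ of eigenvectors, $Q\vec e_j=\theta_j\vec e_j$, set $\beta_j:=\langle\vec e_j,\alpha\rangle$, and for $z\in\R^d$ write $z=\sum_j z_j(z)\,\vec e_j$. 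Then $\langle Q^n z,\alpha\rangle=\sum_{j}z_j(z)\,\beta_j\,\theta_j^n$, with extra polynomial-in-$n$ factors attached to nontrivial Jordan blocks; only the indices $j$ with $\beta_j\ne0$ contribute, and grouping equal eigenvalues this becomes $\langle Q^n z,\alpha\rangle=\sum_{\theta\in\Theta}b_\theta(z)\,\theta^n$, where $\Theta$ is exactly the set in the statement and each $z\mapsto b_\theta(z)$ is a nonzero linear functional $\R^d\to\C$.

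Next I would pick the test vector. Since $\Xi_{\rm legal}(\Tk)$ spans $\R^d$ over $\R$ — large supertiles $\om^k(T_i)$ contain translates of a fixed tile spread out in all directions, giving translation vectors spanning $\R^d$ — it cannot lie inside the finite union of the proper subspaces $\{b_\theta=0\}$, $\theta\in\Theta$. Pick $z_0\in\Xi_{\rm legal}(\Tk)$ outside all of them; then
$$\langle Q^n z_0,\alpha\rangle=\sum_{\theta\in\Theta}b_\theta\,\theta^n,\qquad b_\theta\ne0\ \ (\theta\in\Theta),$$
and by Theorem~\ref{eigen-thm}(i) this real quantity satisfies $\dist\!\bigl(\sum_{\theta\in\Theta}b_\theta\theta^n,\Z\bigr)\to0$. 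The heart of part~(i) is then the Pisot-family lemma: if $\Theta=\{\theta_1,\dots,\theta_r\}$ are distinct algebraic integers of modulus $>1$, $b_1,\dots,b_r\in\C\setminus\{0\}$, and $\dist(\sum_k b_k\theta_k^n,\Z)\to0$, then $\Theta$ is a Pisot family. Up to the multidimensional and polynomial-coefficient bookkeeping this is the number-theoretic content of \cite{soltil,sol-eigen} and of \cite{LeeSol:12}; the mechanism is that the integers $m_n$ with $\sum_k b_k\theta_k^n-m_n\to0$ propagate, via a Galois/Pisot-type argument, every conjugate of modulus $\ge1$ of any $\theta_k$ into $\Theta$. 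I expect this lemma to be the main obstacle — reconciling its precise hypotheses with our coefficients $b_\theta$, which are a priori transcendental, and controlling the $n^s\theta^n$ terms from Jordan blocks; what makes it go through is that $\langle Q^n z_0,\alpha\rangle$ is real and the $\theta_k$ are algebraic integers, so after an extra linear-algebra reduction the argument can be run inside a number field.

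Part~(ii) then follows quickly. The system $(X_\Tk,\R^d,\mu)$ is ergodic, being uniquely ergodic by Theorem~\ref{th-UE3}, so weak mixing is equivalent to the absence of nonzero eigenvalues. If some $\alpha\ne0$ were an eigenvalue, then since $\vec e_1,\dots,\vec e_d$ span $\C^d$ we would have $\langle\vec e_j,\alpha\rangle\ne0$ for at least one $j$, so the set $\Theta$ of part~(i) is non-empty; by part~(i) it is a Pisot family contained in the set of eigenvalues of $Q$, contradicting the totally non-Pisot hypothesis. Hence $0$ is the only eigenvalue, and $(X_\Tk,\R^d,\mu)$ is weakly mixing.
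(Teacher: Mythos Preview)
Your overall strategy matches the paper's: invoke Theorem~\ref{eigen-thm}(i), expand $\langle Q^n z,\alpha\rangle$ in the (generalized) eigenbasis of $Q$, choose $z\in\Xi_{\rm legal}(\Tk)$ so that the relevant eigenvalue coefficients are nonzero, and apply a generalized Pisot theorem. Part~(ii) is exactly the paper's argument.

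There is, however, a genuine gap in part~(i). From ``$\Xi_{\rm legal}(\Tk)$ spans $\R^d$'' you conclude that $\Xi_{\rm legal}(\Tk)$ is not contained in the finite union $\bigcup_{\theta\in\Theta}\{b_\theta=0\}$ of proper subspaces. This implication is false: the union of the two coordinate axes in $\R^2$ spans $\R^2$ yet lies in two proper subspaces. The paper repairs this by observing that $\Xi(\Tk)$ (hence $\Xi_{\rm legal}(\Tk)$, by primitivity) is \emph{relatively dense} in $\R^d$; a finite union of proper linear subspaces through the origin is never relatively dense (along any line $\R v$ with $v$ outside all of them, the distance to the union grows linearly), so one can indeed pick $z_0\in\Xi_{\rm legal}(\Tk)$ with $b_\theta(z_0)\ne 0$ for every $\theta\in\Theta$. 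Your parenthetical ``spread out in all directions'' is the right intuition, but the conclusion you draw from it has to be relative denseness, not merely spanning.

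For the Pisot-family lemma you flag as the main obstacle, the paper cites exactly the needed result: K\"ornyei \cite{Korn} (and Mauduit \cite{Mauduit}) prove that if
\[
\lim_{n\to\infty}\Bigl\|\sum_{i=1}^r P_i(n)\,\theta_i^n\Bigr\|_{\R/\Z}=0
\]
with nonzero polynomials $P_i$ and algebraic integers $\theta_i$ of modulus $>1$, then $\{\theta_1,\dots,\theta_r\}$ is a Pisot family. This covers the Jordan-block polynomial factors and imposes no algebraicity on the coefficients, so your worries about transcendental $b_\theta$ and the $n^s\theta^n$ terms dissolve once this reference is in hand; this is the content of Lemma~\ref{lem-Pisot} in the paper.
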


In the case of $Q$ being a pure dilation we obtain the following.

\begin{cor} \label{cor-Pisot-dilate}
Let $\Tk$ be a primitive substitution tiling with expansion map $Q(x) = \theta x$, with $|\theta|>1$.

{\rm (i)} If the system $(X_\Tk,\R^d,\mu)$ is not weakly mixing (i.e., there exists a non-trivial eigenvalue), then $|\theta|$ is a Pisot number.

{\rm (ii)} If, in addition, $\Tk$ is  repetitive and the substitution is recognizable, then every measure-theoretic eigenvalue is also a topological eigenvalue, i.e., every eigenfunction may be chosen continuous.
\end{cor}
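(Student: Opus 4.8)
The plan is to obtain both parts as specializations of the machinery already in place — Corollary~\ref{cor-Pisot-fam} and Theorem~\ref{eigen-thm} — the only preliminary point being that a pure dilation automatically has algebraic integer eigenvalues. To see the latter, note that for $Q = \theta\,\Id$ we have $|\det Q| = |\theta|^d$, and $|\det Q|$ is the Perron--Frobenius eigenvalue of the integer substitution matrix $\Sf$ (the vector of prototile volumes is a left eigenvector, in the spirit of Theorem~\ref{th-lawa}(i)), hence an algebraic integer; then $|\theta|$, being a root of the monic polynomial $X^d - |\det Q|$ over the ring of algebraic integers, is an algebraic integer, and so is $\theta$. (If one wishes to allow $\theta$ non-real, $Q$ has the two eigenvalues $\theta$ and $\overline\theta$ and the same argument applies to each.)

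For part (i): assume $(X_\Tk,\R^d,\mu)$ is not weakly mixing, so it has a non-trivial eigenvalue $\alpha \ne 0$. By the previous paragraph the eigenvalues of $Q$ are algebraic integers, so Corollary~\ref{cor-Pisot-fam}(i) applies and the set $\Theta$ of eigenvalues of $Q$ whose eigenvectors pair non-trivially with $\alpha$ is a Pisot family. Since $Q$ is scalar, every non-zero vector of $\R^d$ is an eigenvector for $\theta$; taking, say, the eigenvector $\alpha$ itself gives $\langle \alpha,\alpha\rangle = |\alpha|^2 \ne 0$, so $\Theta = \{\theta\}$. By the definition of a Pisot family with $r = 1$ and $|\theta| > 1$, this is exactly the assertion that $|\theta|$ is a (real) Pisot number.

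For part (ii): recognizability of $\om$ means $\om$ is one-to-one on $X_\Tk$, which by the discussion preceding Definition~\ref{def-recog} forces $\Kk(\Tk) = \{0\}$; hence $\Tk$ is non-periodic and the condition (\ref{period-cond}) holds vacuously, while recognizability trivially implies recognizability modulo periods. Combined with repetitivity and the fact that all eigenvalues of $Q$ are algebraic integers, all hypotheses of Theorem~\ref{eigen-thm} (both parts) are satisfied. Thus if $\alpha$ is any measure-theoretic eigenvalue, Theorem~\ref{eigen-thm}(i) gives (\ref{eigenvalue-formula}) and (\ref{period-cond}), and then Theorem~\ref{eigen-thm}(ii) returns that $\alpha$ is an eigenvalue realised by a continuous eigenfunction; equivalently, this is Corollary~\ref{cor-eigen} in the case $Q = \theta\,\Id$. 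Hence every measure-theoretic eigenvalue is a topological eigenvalue.

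I do not expect a genuine obstacle here: all the analytic content lives in Theorem~\ref{eigen-thm} and Corollary~\ref{cor-Pisot-fam}. The only points requiring a word of care are the elementary deduction that $\theta$ is an algebraic integer and the observation that recognizability supplies non-periodicity (so that (\ref{period-cond}) is automatic); once these are noted, the corollary is essentially a one-line specialization.
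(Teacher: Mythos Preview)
Your proposal is correct and follows essentially the same approach as the paper: both parts are deduced from Corollary~\ref{cor-Pisot-fam} and Theorem~\ref{eigen-thm}, with the preliminary observation (already noted in the paper just before the corollary) that $|\det Q| = |\theta|^d$ is an algebraic integer, forcing $\theta$ to be one as well. You are somewhat more explicit than the paper about why recognizability yields non-periodicity and about invoking both directions of Theorem~\ref{eigen-thm} for part~(ii), but the underlying argument is the same; the parenthetical about non-real $\theta$ is unnecessary since $Q = \theta\,\Id$ on $\R^d$ forces $\theta \in \R$.
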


\begin{proof}[Proof sketch of Theorem~\ref{eigen-thm}] (i) 
For the necessity of (\ref{eigenvalue-formula}), the argument is similar  to the proof of \cite[Thm. 4.3]{soltil}, based on Lemma \ref{intersection-of-cylinderSet}. 
We use that any  $f\in L^2(X_\Tk,\mu)$ may be approximated by a linear combination of characteristic functions of the cylinder sets $X(U_{m,\alpha})$. The necessity of (\ref{period-cond}) is straightforward, proved as in \cite[\S4]{sol-eigen}.

(ii)
The argument is similar to the proof of \cite[Theorem 3.13]{sol-eigen}, but since the latter relied on FLC in several places, we will sketch it in more detail. Suppose that (\ref{eigenvalue-formula}) and
(\ref{period-cond}) hold, and define
$$
f_\alpha(\Tk-x) = e^{2\pi i \langle x,\alpha \rangle},\ \ x\in \R^d.
$$
The orbit $\{\Tk-x:\ x\in \R^d\}$ is dense in $X_\Tk$ by definition. It suffices to show that $f_\alpha$ is uniformly continuous on this orbit; then we can extend $f_\alpha$ to $X_\Tk$ by continuity, and this
extension will satisfy the eigenvalue equation. 

 The idea is, roughly, as follows. Suppose that $\Tk-x$ is very close to $\Tk-y$ in the tiling metric. Without loss of generality, we can assume that they agree exactly on a large neighborhood of the origin, say, $B_R(0)$, with $R\gg 1$. Since $\Tk-x=\om(\Tk-Q^{-1}x)$ and $\Tk-y = \om(\Tk- Q^{-1}y)$,  it follows from recognizability modulo periods (see Definition~\ref{def-recog}) 
that $\Tk-Q^{-1} x$ agrees with $\Tk-Q^{-1}y-Q^{-1}g_1$ for some period $g_1$, on a smaller, but still large neighborhood. We can repeat this argument a number of times, say $n$, depending on how large $R$ is, until we can only say that the resulting tilings:
$$
\Tk-Q^{-n}x\ \ \mbox{and}\ \ \Tk-Q^{-n}y - Q^{-n}g_1 - Q^{-n+1}g_2 - \cdots - Q^{-1}g_n
$$
agree on at least one tile near the origin, where $g_1,\ldots,g_n$ are translational periods of $\Tk$. But this means that
$$
Q^{-n}x - Q^{-n}y - Q^{-n}g_1 - Q^{-n+1}g_2 - \cdots - Q^{-1}g_n=:z\in \Xi(\Tk).
$$
Repetitivity implies that $\Xi(\Tk)=\Xi_{\rm legal}(\Tk)$, so $z\in \Xi_{\rm legal}(\Tk)$, and we have
$$
\langle x-y, \alpha\rangle = \langle Q^n z,\alpha\rangle \ \ (\mbox{mod}\ \Z),
$$

\noindent using condition (\ref{period-cond}) and the fact that the group of periods is mapped by $Q$ into itself. From (\ref{eigenvalue-formula}), we have that $e^{2 \pi i \langle Q^n z,\alpha\rangle}\approx 1$ for large $n$, and hence $f_\alpha(\Tk-x) \approx f_\alpha(\Tk-y)$, as desired. This is, of course, far from a proof, and significant amount work is needed to realize this scheme.

The actual proof in \cite{sol-eigen} proceeded with four lemmas. The first one, \cite[Lemma 4.1]{sol-eigen} claimed that the eigenvalues of the expansion map are algebraic integers; now this is an assumption. Next we need the extension of \cite[Lemma 4.2]{sol-eigen}:

\begin{lemma} \label{lem-Pisot} Suppose that $Q$ is a linear expansion map whose eigenvalues are algebraic integers. Then there exists $\rho \in (0,1)$, depending only on $Q$, such that, if
 (\ref{eigenvalue-formula}) holds for  $z\in \Xi(\Tk)$, then
 \be\label{conv-exp}
 \left|e^{2\pi i \langle Q^nz, \alpha\rangle} -1\right| < C_z\rho^n,\ \ n\in \N.
 \ee
 \end{lemma}
\begin{proof}
Denote by $\|t\|_{\R/\Z}$ the distance from $t\in \R$ to the nearest integer. Decompose $z$ into a linear combination of eigenvectors and root vectors (if any) of the expansion map $Q$. Then  (\ref{eigenvalue-formula})  becomes
\be \label{eq-Dioph}
\lim_{n\to \infty} \left\|\sum_{i=1}^r P_i(n) \theta_i^n \right\|_{\R/\Z} = 0,
\ee
where $P_i(n)$ are non-zero polynomials and $\theta_1,\ldots,\theta_r$ are all the eigenvalues of $Q$ (real and complex) for which $z$ has a non-zero coefficient and for which there is an eigenvector $\vec{e}_i$ satisfying $\langle {\vec{e}_i},\alpha\rangle\ne 0$. The degree of the polynomial $P_i$ is the maximal size of a Jordan block for $\theta_i$ which contributes non-trivially to the decomposition.
We can now apply a generalization of the classical Pisot's Theorem, due to K\"{o}rnei \cite[Theorem 1]{Korn} and/or a similar result of Mauduit \cite{Mauduit}, which asserts that if (\ref{eq-Dioph}) holds, then $\Theta=\{\theta_1,\ldots,\theta_r\}$ is a Pisot family and (\ref{conv-exp}) is satisfied, with any $\rho\in (0,1)$ that is larger in absolute value than all the Galois conjugates of $\theta_j$'s from $\Theta$ that do not appear in $\Theta$.
\end{proof}

The rest of the proof of Theorem~\ref{eigen-thm} proceeds exactly as in \cite{sol-eigen}, after noting that \cite[Lem.\ 4.5]{LeeSol:08} was obtained without the FLC assumption. The repetitivity was needed there in the same place as in the rough scheme above, namely, to claim that $\Xi(\Tk)=\Xi_{\rm legal}(\Tk)$.
\end{proof}

\begin{proof}[Proof of Corollary~\ref{cor-Pisot-fam}]
(i) This is deduced from  Theorem~\ref{eigen-thm}(i), following the argument in the proof of Lemma~\ref{lem-Pisot}. The only additional observation needed is that the set of return vectors $\Xi(\Tk)$ is relatively dense in $\R^d$,
hence if $\langle \vec{e}_j,\alpha\rangle \ne 0$, we can find $z\in \Xi(\Tk)$ whose decomposition into a linear combination of eigen- and root vectors of $Q$ will have a non-zero coefficient with respect to 
$\vec{e}_j$.

(ii) This follows from part (i) by the definition of totally non-Pisot family.
\end{proof}

\begin{proof}[Proof of Corollary~\ref{cor-Pisot-dilate}]
The first claim is immediate from Corollary~\ref{cor-Pisot-fam}. The second claim follows from Theorem~\ref{eigen-thm}(i) and the remark about $|\det(Q)| = |\theta|^d$ being an algebraic integer.
\end{proof}


\section{Rigidity for substitution tilings. Examples.} \label{section:Rigidity}

There are many notions of ``rigidity'' in mathematics. Ours originates from the work of Kenyon \cite{Kenyon.rigidity,Kenyon.inflate}. In \cite[Theorem 1]{Kenyon.rigidity} it is shown that any sufficiently small perturbation of a planar tiling with finitely many prototiles (not necessarily a substitution tiling) must have an ``earthquake'', or ``fault line'' discontinuity (we refer to \cite{Kenyon.rigidity}, as well as to \cite[Section 5]{Frank}, for details). In \cite[Cor.\ 3]{Kenyon.rigidity} Kenyon derives from this that a tiling of $\R^2$ with a finite number of prototiles, up to translation, either has FLC, or the union of tile boundaries contains arbitrarily long line segments. In the planar FLC case, with the expansion $Q$ given by a complex multiplication $z\mapsto \lam z$, with $\C\cong \R^2$, the impossibility of a small perturbation led
Kenyon \cite[Section 5, p.\ 484]{Kenyon.GAFA} to conclude, with a reference to the method of \cite{Kenyon.inflate}, that for such tilings holds the inclusion $\Xi(\Tk) \subset \Z[\lambda] \xi$, for some
$\xi\in \C$. In the paper \cite{LeeSol:12} we gave a careful proof of this, as well as a generalization to the case of $\R^d$, under some algebraic assumptions, as stated below.
We start with a definition.

\begin{defi} 
{\em
Let $d=mJ$, for some $J\ge 1$, and let $\Tk $ be a  substitution tiling in $\R^d$ with expansion map $Q$.  We represent  $\R^d=\bigoplus_{j=1}^J H_j$, where
$$
H_{j} = \{0\}^{(j-1)m} \times \R^{m} \times\{0\}^{d- jm}\,.
$$
Assume that $Q$ is diagonalizable over $\C$, and all the eigenvalues of $Q$ are algebraic conjugates with the same multiplicity $J$. The tiling $\Tk$ is said to be {\em rigid} if there exists a linear  isomorphism $\rho: \R^d \to \R^d$ such that 
\be \label{rigidity-property}
\rho Q = Q \rho \ \ \  \mbox{and} \ \ \Xi(\Tk) \subset \rho(\Z[Q]\balpha_1 + \cdots + \Z[Q]\balpha_J), 
\ee
where  $\balpha_j \in H_{j}$, $1 \le j \le J$, are such that for each $1 \le n \le d$, 
\[ \label{def-alpha}
 (\balpha_{j})_{n} =
\left\{\begin{array}{ll}
                             1 \ \ \ & \mbox{if} \ \  (j-1)m + 1 \le n \le jm;\\
                             0 \ \ \ & \mbox{else} \,.
                           \end{array} \right.
\]
Here $\Z[Q]:= \{\sum_{n=0}^N a_n Q^n:\ a_n \in \Z,\ N\in \N\}$. 

The most basic case is when $Q(x) = \theta x$ is a pure dilation in $\R^d$. Then $\Tk$ is rigid if and only if there exists a basis $\{x_1,\ldots,x_d\}$ of $\R^d$ such that
\[\Xi(\Tk) \subset \Z[\theta]x_1 + \cdots + \Z[\theta] x_d.\]

We say that a representable substitution Delone $\kappa$-set $\Lb$ is rigid, if the corresponding tiling $\Tk$ is rigid, that is, (\ref{rigidity-property}) holds for $\Xi(\Lb)$ instead of $\Xi(\Tk)$.

}
\end{defi}
 
\begin{theorem} \cite[Theorem 4.1]{LeeSol:12}
Let $\Tk $ be a repetitive primitive substitution tiling with expansion map $Q$. Suppose that $\Tk$ has FLC, $Q$ is diagonalizable, and all the eigenvalues of $Q$ are algebraic conjugates with the same multiplicity $J$. Then
$\Tk$ is rigid.
\end{theorem}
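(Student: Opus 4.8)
The plan is to prove that the subgroup $G := \langle \Xi(\Tk)\rangle$ of $\R^d$ generated by the return vectors is a finitely generated $\Z[Q]$-module which embeds into a free $\Z[Q]$-module of rank $J$; the required isomorphism $\rho$ then arises by comparing this module with the reference module $\Z[Q]\balpha_1 + \cdots + \Z[Q]\balpha_J$. First I would record the structural facts about $\Xi = \Xi(\Tk)$. By repetitivity $\Xi = \Xi_{\mathrm{legal}}(\Tk)$; it is relatively dense, and by FLC it is uniformly discrete. Moreover $\Xi$ is $Q$-invariant, $Q\,\Xi \subseteq \Xi$, which is a standard consequence of the substitution structure: if $T' = T+x$ with $T,T'\in\Tk$, then $\om(T') = \om(T)+Qx$, so a pair of corresponding tiles in $\om(T)$ and $\om(T')$ exhibits $Qx$ as a return vector of a tiling in $X_\Tk$, which by minimality (repetitivity) has the same return vectors as $\Tk$.

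Next I would use FLC to obtain finite generation of $G$. Fix $R>0$ so large that the set $F$ of base-point differences of pairs of $\Tk$-tiles lying within distance $R$ of each other is finite (FLC) and relatively dense. Any $x\in\Xi$ is then a finite $\Z$-combination of elements of $F$: joining the two tiles realizing $x$ by a chain of $\Tk$-tiles whose consecutive base points are at distance $\le R$ (possible since tiles of bounded diameter cover $\R^d$) and summing the steps gives $x\in\langle F\rangle$. Hence $G$ is a subgroup of the finitely generated free abelian group $\langle F\rangle$, so $G$ is finitely generated, free, and $Q$-stable. Since the eigenvalues of $Q$ form a complete set of algebraic conjugates, their common minimal polynomial $p$ is irreducible over $\Q$, of degree $m$, and $p(Q)=0$ on all of $\R^d$ because $Q$ is diagonalizable; thus $\Z[Q]\cong\Z[t]/(p)$ is an order $\mathcal O$ in the number field $K=\Q[t]/(p)$, and $G$ is a finitely generated torsion-free $\mathcal O$-module.

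The heart of the proof --- and the step I expect to be the main obstacle --- is to show that $G$ has $\mathcal O$-rank exactly $J$, equivalently that $G\otimes_\Z\Q$ is a $K$-vector space of dimension $J$. One inequality is free: $G$ is relatively dense, hence $\R$-spans $\R^d$, while the $\R$-span of any $J'$-dimensional $K$-subspace of $\R^d$ has $\R$-dimension at most $mJ'$ (since $1,Q,\dots,Q^{m-1}$ span $K$ over $\Q$), forcing $mJ'\ge d=mJ$. For the reverse inequality one exploits the self-similar hierarchy: each $\Tk$-tile sits inside an order-$n$ supertile, its position within that supertile belonging to the finite set $\Sigma_n := \Dk + Q\Dk + \cdots + Q^{n-1}\Dk$ with $\Dk=\bigcup_{i,j}\Dk_{ij}$, while two distinct order-$n$ supertiles differ by $Q^n$ times a return vector; thus $\Xi\subseteq(\Sigma_n-\Sigma_n)+Q^n\Xi$ for all $n$. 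Feeding this into an inflation/counting argument that uses FLC (finiteness of local patterns) and the Perron--Frobenius growth rate of the substitution matrix $\Sf$ --- Kenyon's ``no fault line'' method \cite{Kenyon.inflate,Kenyon.GAFA}, generalized to $\R^d$ under the present algebraic hypotheses in \cite{LeeSol:12} --- one rules out that $\Xi$ contains return vectors spreading out into a subgroup of larger $\mathcal O$-rank.

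Given this, clearing denominators of a finite $\mathcal O$-generating set of $G$ embeds $G$ into a free $\mathcal O$-module $\mathcal O\beta_1\oplus\cdots\oplus\mathcal O\beta_J$, where $\beta_1,\dots,\beta_J$ is a $K$-basis of the $K$-span of $G$ inside $\R^d$, a subgroup that $\R$-spans $\R^d$. By the hypotheses on $Q$ the reference module $\Z[Q]\balpha_1+\cdots+\Z[Q]\balpha_J$ is likewise a free $\mathcal O$-module of rank $J$ that $\R$-spans $\R^d$ (the $\balpha_j$ being $Q$-cyclic generators of the summands $H_j$), so the $\mathcal O$-linear map sending $\balpha_j$ to $\beta_j$ extends uniquely to an $\R$-linear automorphism $\rho$ of $\R^d$; it satisfies $\rho Q=Q\rho$ because $p(Q)=0$, and $\Xi(\Tk)\subseteq G\subseteq\rho(\Z[Q]\balpha_1+\cdots+\Z[Q]\balpha_J)$, which is exactly the inclusion (\ref{rigidity-property}). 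The assertion for representable substitution Delone $\kappa$-sets is then immediate, since $\Xi(\Lb)=\Xi(\Tk_{\Lbs})$.
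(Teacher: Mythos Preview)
The paper does not give a proof of this statement at all: it is stated with the citation \cite[Theorem 4.1]{LeeSol:12} and nothing further, apart from the remark that the pure-dilation case was handled in \cite[\S5]{sol-eigen}. So there is no ``paper's own proof'' to compare with; the paper simply imports the result.

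Your sketch is a reasonable outline of the argument in \cite{LeeSol:12}, and you correctly identify that the substantive step --- bounding the $\Z[Q]$-rank of $\langle\Xi(\Tk)\rangle$ from above by $J$ --- is where the real work lies, and you defer that step to the Kenyon-type ``no fault line'' argument generalized in \cite{LeeSol:12}. In that sense you and the paper are pointing to the same source. Two technical points in your wrap-up deserve care. First, the reference module $\Z[Q]\balpha_1+\cdots+\Z[Q]\balpha_J$ is free of rank $J$ and $\R$-spans $\R^d$ only if each $\balpha_j$ is a cyclic vector for $Q$ restricted to a suitable $Q$-invariant $m$-dimensional subspace; with the specific coordinate $\balpha_j$ of the definition this is not automatic for an arbitrary $Q$, and in \cite{LeeSol:12} a preliminary change of coordinates is made to arrange it. Second, extending the $\mathcal O$-linear map $\balpha_j\mapsto\beta_j$ to an $\R$-linear automorphism $\rho$ commuting with $Q$ requires precisely that both $\{Q^k\balpha_j:0\le k<m,\ 1\le j\le J\}$ and $\{Q^k\beta_j\}$ are $\R$-bases of $\R^d$; for the $\beta_j$ this follows from relative density of $G$, but for the $\balpha_j$ it again comes back to the coordinate choice just mentioned.
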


In the special case when $Q(x) = \theta x$ this was proved in \cite[\S 5]{sol-eigen}.


Substitution tilings (with the assumptions on $Q$ stated above) with FLC have the rigidity property, but rigidity does not imply FLC.
The following substitution tiling by Frank and Robinson  \cite{FraRob}  demonstrates this. (See also \cite[Ex.\ 7.5]{ABBLS}). 

\begin{example}[\cite{FraRob}] \label{FraRob-example}
\begin{figure}[ht]  \centering
\includegraphics[width=0.8\textwidth]{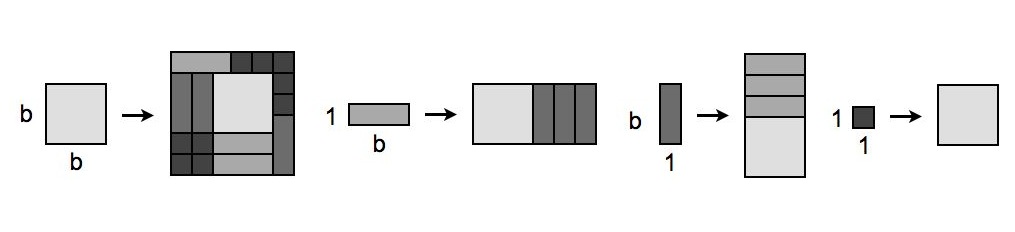}
\caption{Prototiles of the Frank-Robinson substitution tiling without FLC}
\end{figure}
Take
{\footnotesize
\begin{eqnarray*}
Q A_1 &=& (A_1 +(2,2))  \cup (A_2 +(2,0))  \cup (A_2 +(2,1)) \cup (A_2 +(0,b+2)) \\
&& \cup (A_3 +(0,2)) \cup (A_3 +(1,2)) \cup (A_3 +(b+2,0))   \cup A_4 \cup (A_4 +(1,0))  \\
&& \cup (A_4 +(0,1)) \cup (A_4 +(1,1)) \cup (A_4 +(b+2,b)) \cup (A_4 +(b+2,b+1))  \\
&& \cup (A_4 +(b+2,b+2))  \cup (A_4 +(b+1,b+2))  \cup (A_4 +(b,b+2))  \\
Q A_2 &=& A_1 \cup (A_3 + (b, 0)) \cup (A_3 + (b+1, 0)) \cup (A_3 + (b+2, 0))  \\
Q A_3 &=& A_1 \cup (A_2 + (0, b)) \cup (A_2 + (0, b+1)) \cup (A_2 + (0, b+2))  \\
Q A_4 & = & A_1 \,,
\end{eqnarray*} }
where $b$ is the largest root of $x^2 - x -3 = 0$ and $Q = \left( \begin{array}{cc}
                              b & 0 \\
                              0 & b
                              \end{array}
                     \right)$. Note that $b$ is not a Pisot number. This defines a tile-substitution, which has a fixed point $\Tk$ of infinite local complexity.
One sees that each set of translation vectors satisfies $\mathcal{D}_{ij} \subset \Z[Q](1, 0) + \Z[Q](0, 1)$. 
Hence 
\[
\Xi(\Tk) \subset \Z[Q](1, 0) + \Z[Q](0, 1),
\]
whence the rigidity property holds. By Corollary~\ref{cor-Pisot-dilate}(i), the dynamical system $(X_\Tk,\R^d,\mu)$ is weakly mixing.
\end{example}

Although the definition of rigidity seems to be more complicated than the notion of FLC, it is easier to check than FLC.  One only needs to consider the  sets of translation vectors $\Dk_{ij}$ and find the smallest module over $\Z[Q]$ containing them.

\medskip

{On the other hand, there are examples of substitution tilings for which  neither FLC nor the rigidity hold. 
In this case, unlike \cite[Thm.\ 5.2]{LeeSol:12}, we cannot expect the equivalence between relative dense set of eigenvalues of the dynamical system and being not weakly mixing.
}

\begin{example}[\cite{Kenyon92}] \label{ex-kenyon}
Consider the substitution tiling $\Tk$ in $\R^2$ with a single prototile $T$ and expansion
$Q = \left( \begin{array}{cc}
                              3 & 0 \\
                              0 & 3
                              \end{array}
                     \right)$ such that 
                     \[  Q T = \bigcup_{d \in \mathcal{D}} (T + d)\]
                     where \[ \mathcal{D} = \{(0,-1), (0,0), (0,1), (-1, -1), (-1, 0), (-1, 1), (1, -1+a), (1, a), (1, 1+a)\}\] and $a\in\R$ is irrational.
Note that \[ \Xi(\Tk) \subset \Z[Q](1,0) + \Z[Q](0,1) + \Z[Q](0, a)\]
and $\Z[Q](1,0) + \Z[Q](0,1) + \Z[Q](0, a)$ is the minimal module over $\Z[Q]$ containing $\Xi(\Tk)$.  
Thus $\Tk$ does not have the rigidity property. We observe that the tiling is periodic in the direction of $y$-axis, so the tiling dynamical system has non-trivial eigenfunctions and is not weakly mixing.

On the other hand, let $\alpha=(\alpha_1,\alpha_2)\in \R^2$ be an eigenvalue. By the condition (\ref{eigenvalue-formula}), {since $(1,0), (0,1) \in \Xi_{\rm legal}(\Tk)$, we have that $3^n \alpha_1\to 0$ mod $1$ and $3^n \alpha_2\to 0$ mod $1$, hence $\alpha_1$ and $\alpha_2$ are 3-adic rationals. On the other hand, $(1,a)\in \Xi_{\rm legal}(\Tk)$ as well, hence $3^n \alpha_1 + 3^n a\alpha_2\to 0$ mod 1, and we conclude that $a\alpha_2$ is a 3-adic rational. But since $a$ is irrational, we obtain that $\alpha_2=0$.}
\end{example}
\begin{figure}[ht]  
\centering
\includegraphics[width=0.3\textwidth]{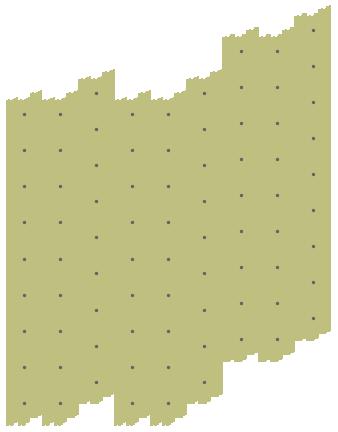}
\caption{A patch of the tiling from Example~\ref{ex-kenyon} for $a = 2-\sqrt{2}$. {The dots in the figure indicate the representative points of tiles.} }
\end{figure}


See also \cite[Chap.1]{BaakeGrimm2} for a figure of the Kenyon's example and references to other related examples. 

\medskip

We can modify the Kenyon's example to make it non-periodic. First take a constant length substitution tiling and a Fibonacci substitution tiling in $\R$ and consider a direct product substitution, then slide the last column relative to the first column. We give a precise example of this kind below.

\begin{example} \label{ex-kenyon.modif}
Consider an expansion map
$Q = \left( \begin{array}{cc}
                              3 & 0 \\
                              0 & \tau
                              \end{array}
                     \right)$, where $\tau$ is the golden ratio, $\tau^2-\tau-1=0$.
The tile equations are 
{\footnotesize
\begin{eqnarray*}
Q A_1 &=& A_1  \cup (A_1 +(\tau,0))  \cup (A_1 +(2\tau,a)) \cup (A_2 +(0, \tau)) 
\cup   (A_2 + (\tau,\tau)) \cup (A_2 + (2\tau, a + \tau)) \\
Q A_2 &=& A_1 \cup (A_1 + (\tau,0))    \cup (A_1 + (2\tau,a)),
\end{eqnarray*} }
where $a\in\R$ is irrational, such that $a\not\in \Q(\tau)$.
\begin{figure}[ht]  
\centering
\includegraphics[width=0.6\textwidth]{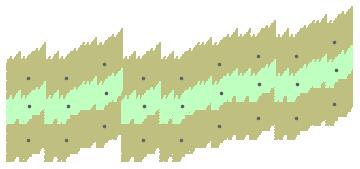}
\caption{Modification of Kenyon's example. The figure shows  a patch of the substitution tiling in the case of $a =  2-\sqrt{2}$. 
{The dots in the figure indicate the representative points of tiles.}  }
\end{figure}
Note that \[ \mathcal{D}_{ij} \subset (\tau,0)\Z[Q] +(0,\tau)\Z[Q] + (2\tau,a)\Z[Q] \]
and $(\tau,0)\Z[Q] +(0,\tau)\Z[Q] + (2\tau,a)\Z[Q]$ is the minimal module over $\Z[Q]$ containing $\Xi(\Tk)$. {Since 
\[ (\tau,0), (0,\tau), (2\tau,a) \] cannot be linearly independent over $\R$},  the tiling $\Tk$ does not have the rigidity property. We observe that the tiling is non-periodic. However 
note that $\lim_{n \to \infty} \langle (\tau,0), (x,y)Q^n \rangle = 0 $ mod $\Z$ for any $(x, y) \in \Xi(\Tk)$. Moreover, one can check by inspection that it is repetitive and recognizable.
So the tiling dynamical system has non-trivial eigenvalue and is not weakly mixing.
\end{example}

 In \cite{LeeSol:12}, we showed under the rigidity assumption the equivalence between the  Pisot family property and the existence of a relatively dense set of eigenvalues for $(X_{\Tk}, \R^d, \mu)$. However, we note that the equivalence can be proved without assuming FLC.  So we revisit the theorem under the assumption of the rigidity. 

Let us first recall the notion of a Meyer set. A Delone set $Y\subset \R^d$ is {\em Meyer} if it is
relatively dense and $Y-Y$ is uniformly discrete.

{Substitution tilings with the Meyer property always have FLC. But the converse is not true.
Substitution tilings in $\R$ always have FLC, but not the Meyer property. An example in \cite[Example 7.8]{soltil} shows such a substitution tiling in $\R^2$. The example is a special case of Kenyon's construction \cite{Kenyon.GAFA} where $\lambda$ is not a complex Pisot number.  (We should correct that example slightly, as follows:
 $\theta(a)=b, \theta(b)=c, \theta(c)=a^{-3}b^{-1}$ and $\lambda$ is the non-real root of the equation $\lambda^3 + \lambda +3 = 0$.)
}

\medskip

Lagarias \cite[Problem 4.11]{Lag00} asked whether a primitive substitution Delone set, which is pure-point diffractive, is necessarily Meyer. In \cite{LeeSol:08} we gave a positive answer
under the FLC assumption. Here we show that the FLC assumption may be dropped if we assume  that all the eigenvalues of the expansion map are algebraic integers.
For the notion of diffraction spectrum and its connection with dynamical spectrum we refer the reader to \cite{BaaLen17} and references therein. Briefly, pure point diffraction implies that there is a relatively dense set of Bragg peaks, which in turn implies that there is a relatively dense set of eigenvalues. Now the next proposition provides the answer.

\begin{prop} \label{prop-Lag-question}
Let $\Lb$ be a repetitive primitive substitution Delone $\kappa$-set in $\R^d$ with expansion map $Q$. Suppose that all the eigenvalues of $Q$ are algebraic integers. Assume that the 
set of eigenvalues of $(X_{\Lbs}, \R^d, \mu)$ is relatively dense. Then $\supp(\Lb)$ is a Meyer set.
\end{prop}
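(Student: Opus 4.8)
The plan is to argue that relative density of the eigenvalue set forces, via Theorem~\ref{eigen-thm}(i), a strong Diophantine/algebraic constraint on the return vectors $\Xi(\Lb)$, and then to use that constraint to show $\Xi(\Lb)$ — hence $\supp(\Lb)$, since $\supp(\Lb)-\supp(\Lb)\subseteq \Xi(\Lb)$ up to finitely many translates between colours — is uniformly discrete, which is exactly the Meyer property. First I would pass to the associated substitution tiling $\Tk=\Tk_{\Lbs}$, so that $\Xi(\Lb)=\Xi(\Tk)$, and recall from Theorem~\ref{eigen-thm}(i) (whose hypotheses hold here, since all eigenvalues of $Q$ are algebraic integers) that for every eigenvalue $\alpha$ of the system and every $z\in\Xi_{\rm legal}(\Tk)=\Xi(\Tk)$ (using repetitivity) one has $\lim_{n\to\infty} e^{2\pi i\langle Q^n z,\alpha\rangle}=1$. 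Moreover, by Lemma~\ref{lem-Pisot}, this convergence is in fact \emph{geometric}: $|e^{2\pi i\langle Q^n z,\alpha\rangle}-1|<C_z\rho^n$ for a fixed $\rho\in(0,1)$ depending only on $Q$.

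The second step is to exploit relative density of the eigenvalue set $E\subset\R^d$. Because $E$ is relatively dense, for each $z\in\Xi(\Tk)$ I can choose finitely many $\alpha_1,\dots,\alpha_d\in E$ whose linear span is all of $\R^d$, and in fact, by relative density, one can choose a single such basis that works uniformly (the span of $E$ is a linear subspace that is all of $\R^d$ since $E$ is relatively dense). Then the real numbers $\langle Q^n z,\alpha_k\rangle$, $k=1,\dots,d$, are all within $C_z\rho^n$ of integers; since $\{\alpha_1,\dots,\alpha_d\}$ is a basis, this pins $Q^n z$ itself to within $C'_z\rho^n$ of the lattice $L^*:=\{v\in\R^d:\langle v,\alpha_k\rangle\in\Z,\ k=1,\dots,d\}$. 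Thus $\dist(Q^n z, L^*)\le C'_z\rho^n\to 0$ for every $z\in\Xi(\Tk)$. This is the key mechanism: the return vectors, when blown up by $Q^n$, are exponentially attracted to a fixed lattice.

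The third step is to turn $\dist(Q^n z, L^*)\to 0$ (geometrically) into uniform discreteness of $\Xi(\Tk)$. Suppose, for contradiction, that $\Xi(\Tk)$ is not uniformly discrete: then there is a sequence of pairs $z_j\ne z'_j$ in $\Xi(\Tk)$ with $z_j-z'_j\to 0$ but $z_j-z'_j\ne 0$; note $z_j-z'_j\in\Xi(\Tk)$ as well (differences of return vectors are return vectors, up to the usual patch-legality bookkeeping, which is fine by repetitivity). Writing $w_j:=z_j-z'_j$, we have $w_j\to 0$, $w_j\ne 0$, and $\dist(Q^n w_j, L^*)\le C'_{w_j}\rho^n$. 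The difficulty is that the constants $C'_{w_j}$ are not uniform in $j$; here I would use that there is a \emph{finite} bound on the number of prototiles and patch types, so the relevant constant $C_z$ in Lemma~\ref{lem-Pisot} depends only on $z$ through finitely many data — more precisely, applying $Q^{-n_j}$ to a return vector that is genuinely close to $L^*$ and invoking the expansivity of $Q$, for suitable $n_j\to\infty$ the vectors $Q^{n_j} w_j$ stay in a bounded region yet remain within $o(1)$ of $L^*$, so $Q^{n_j}w_j\in L^*$ exactly for large $j$ — but simultaneously $Q^{n_j}w_j\to 0$ (by a careful choice of $n_j$ balancing the expansion rate of $Q$ against $|w_j|\to 0$), forcing $Q^{n_j}w_j=0$ and hence $w_j=0$, a contradiction.

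\textbf{Main obstacle.} The delicate point, and the one I expect to require real care, is the third step: making the choice of $n_j$ precise and controlling the $z$-dependence of the constant $C_z$ in Lemma~\ref{lem-Pisot} uniformly over an unbounded family of return vectors $w_j$. The cleanest route is probably not a direct ``squeeze'' but rather an argument within the tiling itself: use recognizability/the hierarchical structure to observe that if $w\in\Xi(\Tk)$ is very short then the two tiles realizing it lie inside a common low-level supertile $Q^k A_i$ with $k$ bounded in terms of $|w|$, apply the substitution $k$ times to replace $w$ by $Q^{-k}$-preimage data of bounded combinatorial type, and thereby reduce to finitely many ``germ'' situations at the origin, on which the Diophantine estimate has a uniform constant. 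Once that uniformity is in hand, the contradiction $w_j=0$ drops out, $\Xi(\Lb)$ is uniformly discrete, and since $\Lb$ is Delone (so $\supp(\Lb)$ is relatively dense), $\supp(\Lb)$ is Meyer.
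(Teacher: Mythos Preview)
Your first two steps --- invoking Theorem~\ref{eigen-thm}(i), upgrading to geometric decay via Lemma~\ref{lem-Pisot}, and extracting a basis $\alpha_1,\dots,\alpha_d$ from the relatively dense eigenvalue set --- are exactly the paper's route; its proof simply points to \cite[Prop.\,4.8]{LeeSol:08}, noting that the only FLC-dependent ingredient there (the eigenvalue characterization, \cite[Cor.\,4.7]{LeeSol:08}) is now supplied by Theorem~\ref{eigen-thm}.

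The gap is in your third step. Two concrete problems. First, $\Xi(\Tk)=\bigcup_i(\Lambda_i-\Lambda_i)$ is not closed under subtraction, so $w_j:=z_j-z'_j\in\Xi(\Tk)$ is unjustified as stated. Second, and more seriously, your proposed resolution of the non-uniformity of $C_z$ relies on recognizability (inverting $\omega$ to reach ``$Q^{-k}$-preimage data'', or placing the two tiles realizing a short return vector inside a common low-level supertile), but recognizability is \emph{not} among the hypotheses of this proposition --- it enters only in Theorem~\ref{eigen-thm}(ii) and Theorem~\ref{PisotFamily-MeyerSet}. Even granting those two points, the balancing of $n_j$ you sketch cannot simultaneously force $Q^{n_j}w_j\to 0$ and $C'_{w_j}\rho^{n_j}\to 0$ without the very control on $C'_{w_j}$ that is missing. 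The mechanism actually used in \cite{LeeSol:08} to obtain uniformity is \cite[Lemma~4.5]{LeeSol:08} (explicitly flagged in the proof of Theorem~\ref{eigen-thm}(ii) as FLC-free): iterating the \emph{forward} substitution relations \eqref{eq-sub} yields a fixed finite set $Z$, built from the digit sets $\Dk_{ij}$, such that every $z\in\Xi(\Tk)$ decomposes as $\sum_{n\ge 0} Q^n z_n$ with all $z_n\in Z$. One then applies Lemma~\ref{lem-Pisot} only to the finitely many elements of $Z$, obtaining a single constant valid for every $z\in\Xi(\Tk)$, and concludes the Meyer property via the $\eps$-dual (harmonious-set) characterization --- no recognizability and no contradiction argument.
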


\begin{proof}
Under the assumption that all the eigenvalues of $Q$ are algebraic integers, the only place that requires FLC in the proof of \cite[Prop. 4.8]{LeeSol:08} is \cite[Cor. 4.7]{LeeSol:08}. But we obtained Theorem \ref{eigen-thm} to replace \cite[Cor. 4.7]{LeeSol:08}, without assuming FLC. So the result follows. 
\end{proof}

On the other hand, we should mention that the answer to Lagarias \cite[Problem 4.10]{Lag00} is negative: there exists a repetitive pure point diffractive set, which is not Meyer. An example (the ``scrambled Fibonacci tiling'') was constructed by Frank and Sadun \cite{FraSa:fusion}.

\begin{theorem}  \label{PisotFamily-MeyerSet}
Let $\Lb$ be a repetitive primitive substitution Delone $\kappa$-set $\R^d$ with expansion map $Q$. Suppose that the corresponding tile-substitution tiling $\om$ is recognizable modulo periods, $Q$ is diagonalizable, and all the eigenvalues of $Q$
are algebraic conjugates with the same multiplicity. Then the
following are equivalent:
\begin{itemize}
\item[(i)] the set of eigenvalues of $(X_{\Lbs}, \R^d, \mu)$
    is relatively dense;
\item[(ii)] $\supp(\Lb)$ is a Meyer set;
\item[(iii)] $\Lb$ is rigid and the set of eigenvalues of $Q$ forms a Pisot family. 
\end{itemize}
\end{theorem}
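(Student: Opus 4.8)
The plan is to establish the cycle of implications $(iii)\Rightarrow(i)\Rightarrow(ii)\Rightarrow(iii)$, reusing the FLC-free machinery already assembled in the excerpt (Theorem~\ref{eigen-thm}, Corollary~\ref{cor-Pisot-fam}, Proposition~\ref{prop-Lag-question}) and mimicking the structure of the corresponding proof in \cite{LeeSol:12}, but replacing every appeal to FLC either by the rigidity hypothesis or by the assumption that the eigenvalues of $Q$ are algebraic integers (which holds automatically here, since they are algebraic conjugates of $|\det Q|^{1/?}$-type Perron data; more precisely, being algebraic conjugates with the same multiplicity forces them to be algebraic integers once one of them is, and $|\det Q|$ is an algebraic integer). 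Throughout I pass freely between $\Lb$ and the associated substitution tiling $\Tk = \Tk_{\Lbs}$, using that $\Xi(\Lb) = \Xi(\Tk)$ and that repetitivity gives $\Xi(\Tk) = \Xi_{\rm legal}(\Tk)$.

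\textbf{Step 1: $(iii)\Rightarrow(i)$.} Assume $\Lb$ is rigid and $\Theta = \{\theta_1,\dots,\theta_r\}$, the set of eigenvalues of $Q$, is a Pisot family. By rigidity, $\Xi(\Tk)\subset \rho(\Z[Q]\balpha_1 + \cdots + \Z[Q]\balpha_J)$ with $\rho Q = Q\rho$. The goal is to produce a relatively dense set of $\alpha\in\R^d$ satisfying the eigenvalue criterion \eqref{eigenvalue-formula} (and \eqref{period-cond}), and then invoke Theorem~\ref{eigen-thm}(ii) to conclude these are genuine eigenvalues with continuous eigenfunctions. The natural candidates are $\alpha$ lying in a suitable dual module: since all $\theta_i$ are Pisot-family algebraic integers, for $z\in\Z[Q]\balpha_j$ the quantity $\langle Q^n z,\alpha\rangle$ is, after decomposing $z$ along eigenspaces, of the form $\sum_i P_i(n)\theta_i^n$ up to the pairing with $\alpha$; choosing $\alpha$ so that $\langle \rho(\Z[Q]\balpha_j),\alpha\rangle \subset \Z[\theta_1,\dots,\theta_r]$ (a full-rank sublattice after applying $\rho^{\!\top}$), the Pisot-family property forces $\|\langle Q^n z,\alpha\rangle\|_{\R/\Z}\to 0$ geometrically — this is exactly the converse direction of the Környei/Mauduit theorem cited in Lemma~\ref{lem-Pisot}. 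One checks the set of such $\alpha$ is a lattice (hence relatively dense), and \eqref{period-cond} is arranged by intersecting with the dual of $\Kk$, still leaving a relatively dense set since $\Kk$ is discrete. Then Theorem~\ref{eigen-thm}(ii) applies.

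\textbf{Step 2: $(i)\Rightarrow(ii)$.} This is precisely Proposition~\ref{prop-Lag-question}: relative density of the eigenvalue set, together with the algebraic-integer hypothesis on the spectrum of $Q$, forces $\supp(\Lb)$ to be Meyer. Nothing further is needed here beyond quoting it.

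\textbf{Step 3: $(ii)\Rightarrow(iii)$.} Assume $\supp(\Lb)$ is Meyer, so $\Xi(\Lb)$ is uniformly discrete and relatively dense, i.e.\ a Meyer set; in particular $\Lb$ has FLC. Now the original \cite[Theorem~5.2/4.1]{LeeSol:12} applies verbatim: an FLC repetitive primitive substitution tiling with $Q$ diagonalizable and eigenvalues algebraic conjugates of equal multiplicity is rigid. For the Pisot-family conclusion, use that Meyer $\Rightarrow$ the dynamical system has a relatively dense set of eigenvalues (by \cite{LeeSol:08}-type arguments, or via the Meyer-set diffraction results), pick any such $\alpha$ with $\langle\vec e_j,\alpha\rangle\neq 0$ for all $j$ — possible because $\Xi(\Tk)$ is relatively dense, so its $\Z[Q]$-span meets every eigendirection — and apply Corollary~\ref{cor-Pisot-fam}(i) to deduce that the full set of eigenvalues of $Q$ is a Pisot family.

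\textbf{Main obstacle.} The delicate point is Step~1: constructing \emph{enough} eigenvalues. One must verify that the dual-module construction genuinely yields a relatively dense (not merely nonempty) set of $\alpha$, and that the decay estimate $\|\langle Q^n z,\alpha\rangle\|_{\R/\Z}\le C_z\rho^n$ holds \emph{uniformly enough} over a generating set of $z\in\Xi(\Tk)$ to feed into Theorem~\ref{eigen-thm}(ii) — the Jordan-block/polynomial-coefficient bookkeeping in the non-diagonalizable-looking decomposition is avoided here since $Q$ is assumed diagonalizable, which simplifies matters, but one still needs the precise form of the Környei--Mauduit converse and care that the pairing $\langle\rho(\cdot),\alpha\rangle$ lands in the ring of integers generated by $\Theta$ rather than a larger ring. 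Once that is in place, the three implications close up and the equivalence follows.
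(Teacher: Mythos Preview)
Your plan is correct and follows essentially the same cycle $(iii)\Rightarrow(i)\Rightarrow(ii)\Rightarrow(iii)$ as the paper, with $(i)\Rightarrow(ii)$ via Proposition~\ref{prop-Lag-question} and $(ii)\Rightarrow(iii)$ via Meyer $\Rightarrow$ FLC together with \cite[Theorem~4.1 and Prop.~1]{LeeSol:12}. The paper disposes of your ``main obstacle'' $(iii)\Rightarrow(i)$ in one line, by observing that \cite[Prop.~1]{LeeSol:12} already requires only rigidity of $\Xi(\Tk)$ rather than FLC, so the dual-module eigenvalue construction you sketch is precisely what lies behind that citation.
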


\begin{proof} 
(i) $\Rightarrow$ (ii) This is a special case of Proposition~\ref{prop-Lag-question}. \\
(ii) $\Rightarrow$ (iii) If $\supp(\Lb)$ is a Meyer set, then $\Lb$ has  FLC. So from \cite[Theorem 4.1 and Prop. 1]{LeeSol:12}, the result follows.\\
(iii) $\Rightarrow$ (i) We note that \cite[Prop. 1]{LeeSol:12} 
does not use the assumption of FLC. Instead it requires the rigidity of $\Xi(\Tk)= \Xi(\Lb)$. So from the assumption the result follows. 
\end{proof}

So, in particular, we obtain that a relatively dense set of eigenvalues for the dynamical system $(X_{\Lbs}, \R^d, \mu)$ is impossible in the case of infinite local complexity.

\medskip

As we saw from Kenyon's example, weak mixing is not equivalent to the Pisot family condition in general. However, we recover this if we assume rigidity.

\begin{cor}   \label{PisotFamily-weaklymixing-MeyerSet}
Let $\Lb$ be a  repetitive primitive substitution Delone $\kappa$-set $\R^d$ with expansion map $Q$. Suppose that the corresponding tile-substitution $\om$ is recognizable modulo periods, $Q$ is diagonalizable over $\C$, and all the eigenvalues of $Q$
are algebraic conjugates with the same multiplicity.
Assume that $\Lb$ is rigid.
Then the
following are equivalent:
\begin{itemize}
\item[(i)] the set of eigenvalues of $(X_{\Lbs}, \R^d, \mu)$
is relatively dense;
\item[(ii)] $(X_{\Lbs}, \R^d, \mu)$ is not weakly mixing;
\item[(iii)] the set of eigenvalues of $Q$ forms a Pisot family;
\item[(iv)] $\supp(\Lb)$ is a Meyer
set.
\end{itemize}
\end{cor}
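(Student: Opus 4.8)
The plan is to reduce the statement to Theorem~\ref{PisotFamily-MeyerSet} and Corollary~\ref{cor-Pisot-fam}(i), which already do almost all the work. Since we are now \emph{assuming} that $\Lb$ is rigid, condition (iii) of Theorem~\ref{PisotFamily-MeyerSet} --- ``$\Lb$ is rigid and the set of eigenvalues of $Q$ forms a Pisot family'' --- is equivalent to condition (iii) of the present corollary. Hence Theorem~\ref{PisotFamily-MeyerSet} immediately supplies the equivalences (i) $\Leftrightarrow$ (iii) $\Leftrightarrow$ (iv). It therefore remains only to insert (ii) into this chain, and for that I would prove (i) $\Rightarrow$ (ii) and (ii) $\Rightarrow$ (iii); combined with (iii) $\Rightarrow$ (i) from Theorem~\ref{PisotFamily-MeyerSet}, this closes the loop.

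The implication (i) $\Rightarrow$ (ii) is immediate: the set of eigenvalues of $(X_{\Lbs},\R^d,\mu)$ is a subgroup of $\R^d$, a relatively dense subgroup of $\R^d$ necessarily contains a nonzero vector, and the existence of a non-trivial eigenvalue is precisely the failure of weak mixing.

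For (ii) $\Rightarrow$ (iii), suppose the system is not weakly mixing and fix a non-trivial eigenvalue $\alpha\in\R^d$, $\alpha\neq 0$. Since $Q$ is diagonalizable over $\C$, choose an eigenbasis $\vec{e}_1,\ldots,\vec{e}_d$ of $\C^d$; as these vectors span, $\langle\vec{e}_j,\alpha\rangle$ cannot vanish for every $j$, for otherwise the functional $v\mapsto\langle v,\alpha\rangle$ would kill a spanning set and $\alpha$ would be $0$. Hence the set $\Theta$ of those eigenvalues $\theta_j$ of $Q$ with $\langle\vec{e}_j,\alpha\rangle\neq 0$, as considered in Corollary~\ref{cor-Pisot-fam}(i), is non-empty, and by that corollary $\Theta$ is a Pisot family. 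Now invoke the standing hypothesis that all eigenvalues of $Q$ are Galois conjugates of one another, together with the expansiveness of $Q$, which forces every eigenvalue to have modulus $>1$: fixing any $\theta_{j_0}\in\Theta$, each eigenvalue $\theta$ of $Q$ is a Galois conjugate of $\theta_{j_0}$ with $|\theta|>1$, hence $\theta\in\Theta$ by the definition of a Pisot family. Thus $\Theta$ is the entire set of eigenvalues of $Q$, which is therefore a Pisot family, i.e.\ (iii) holds.

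The genuinely new point over Theorem~\ref{PisotFamily-MeyerSet} --- and the only step requiring any thought --- is the last one in (ii) $\Rightarrow$ (iii): the passage from the \emph{partial} Pisot family $\Theta$ produced by Corollary~\ref{cor-Pisot-fam}(i) to the \emph{full} spectrum of $Q$, which is exactly where the assumption that all eigenvalues of $Q$ are algebraic conjugates (all of modulus $>1$) is used. The remaining care is bookkeeping: one should verify that the hypotheses of Corollary~\ref{cor-Pisot-fam}(i) and of Theorem~\ref{PisotFamily-MeyerSet} are met here (in particular that the eigenvalues of $Q$ are algebraic integers, as is part of the standing assumptions on $Q$), and that condition (\ref{eigenvalue-formula}) of Theorem~\ref{eigen-thm} is the mechanism by which $\alpha$ being an eigenvalue feeds into Corollary~\ref{cor-Pisot-fam}(i). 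I do not expect any of this to present a real obstacle.
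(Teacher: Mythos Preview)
Your argument is correct and follows the same overall cycle as the paper's proof: (i) $\Rightarrow$ (ii) is trivial, (ii) $\Rightarrow$ (iii) is the only step with content, and (iii) $\Leftrightarrow$ (iv) $\Leftrightarrow$ (i) come straight from Theorem~\ref{PisotFamily-MeyerSet} once rigidity is assumed. The one difference is in how (ii) $\Rightarrow$ (iii) is justified: the paper simply cites \cite[Lemma~5.1]{LeeSol:12}, whereas you derive it internally from Corollary~\ref{cor-Pisot-fam}(i) together with the hypothesis that all eigenvalues of $Q$ are Galois conjugates of one another (and of modulus $>1$), so that the partial Pisot family $\Theta$ produced by that corollary must already be the full spectrum. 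Your route has the advantage of being self-contained within the present paper; the paper's citation presumably packages the same argument. No gap.
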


\begin{proof} 
(i) $\Rightarrow$ (ii) is obvious.\\
(ii) $\Rightarrow$ (iii) follows by \cite[Lemma 5.1]{LeeSol:12}.\\
(iii)  $\Rightarrow$ (iv) and (iv)  $\Rightarrow$ (i) hold by Theorem \ref{PisotFamily-MeyerSet}. 
\end{proof}


\section{Appendix}

Recall that for Delone $\kappa$-sets $\Lb_1, \Lb_2 \in X$, we let
\[ d(\Lb_1, \Lb_2): = \mbox{min}\{\widetilde{d}(\Lb_1, \Lb_2), 2^{-\frac{1}{2}}\},\]
where
$$
\widetilde{d}(\Lb_1, \Lb_2) = \mbox{inf} \Bigl\{ \epsilon>0\ | \ B_{\frac{1}{\epsilon}}(0) \cap \Lb_2 \subset \Lb_1 + B_{\epsilon}(0) \  \mbox{and}
  \ B_{\frac{1}{\epsilon}}(0) \cap \Lb_1  \subset \Lb_2 + B_{\epsilon}(0)  \Bigr\}. 
$$
Here we show that this is a metric. The only issue is the triangle inequality.
Suppose that $d(\Lb_1, \Lb_2) \le \epsilon_1$, $d(\Lb_2, \Lb_3) \le \epsilon_2$ and $\epsilon_2, \epsilon_3 < 2^{-\frac{1}{2}}$.
Then 
\begin{eqnarray*}
B_{\frac{1}{\epsilon_1}}(0) \cap \Lb_2 \subset \Lb_1 + B_{\epsilon_1}(0) , \\
B_{\frac{1}{\epsilon_1}}(0) \cap \Lb_1 \subset \Lb_2 + B_{\epsilon_1}(0), \\
B_{\frac{1}{\epsilon_2}}(0) \cap \Lb_3 \subset \Lb_2 + B_{\epsilon_2}(0), \\
B_{\frac{1}{\epsilon_2}}(0) \cap \Lb_2 \subset \Lb_3 + B_{\epsilon_2}(0)  \,.
\end{eqnarray*}
We want to show that 
\begin{eqnarray*}
B_{\frac{1}{\epsilon_1+\epsilon_2}}(0) \cap \Lb_3 \subset \Lb_1 + B_{\epsilon_1+ \epsilon_2}(0), \\
 B_{\frac{1}{\epsilon_1 + \epsilon_2}}(0) \cap \Lb_1 \subset \Lb_3 + B_{\epsilon_1+ \epsilon_2}(0)  \,.
\end{eqnarray*}
For any $x \in B_{\frac{1}{\epsilon_1+ \epsilon_2}}(0) \cap \Lb_3 $, there exists $y \in \Lb_2$ such that $y+t = x$ and $t \in B_{\epsilon_2}(0)$.
Then 
\[ B_{\frac{1}{\epsilon_1}}(t) \cap (\Lb_2 + t) \subset \Lb_1 + t + B_{\epsilon_1}(0). \]
Note that 
\[ B_{\frac{1}{\epsilon_1}}(t)  \supset B_{\frac{1}{\epsilon_1} - \epsilon_2}(0) \  \mbox{and}  \  \frac{1}{\epsilon_1} - \epsilon_2 > \frac{1}{\epsilon_1 + \epsilon_2}. \]
So 
\[x \in B_{\frac{1}{\epsilon_1+\epsilon_2}}(0) \cap (\Lb_2 + t) \subset \Lb_1 + B_{\epsilon_1+ \epsilon_2}(0) \,. \]
Therefore 
\[ B_{\frac{1}{\epsilon_1+\epsilon_2}}(0) \cap \Lb_3 \subset \Lb_1 + B_{\epsilon_1+ \epsilon_2}(0)  \,. \]
Similarly,
\[ B_{\frac{1}{\epsilon_1+\epsilon_2}}(0) \cap \Lb_1 \subset \Lb_3 + B_{\epsilon_1+ \epsilon_2}(0) \,. \]
Thus $d(\Lb_1, \Lb_3) \le \epsilon_1 + \epsilon_2$.

\section{Acknowledgement}

We would like to thank to M. Baake, D. Frettl\"{o}h, C. Richard,  and N. P. Frank for valuable comments and discussions. We are grateful to the anonymous referee for many constructive suggestions which helped to improve the exposition. J.-Y. Lee would like to acknowledge the support by Local University Excellent Researcher Supporting Project through the Ministry of Education of the Republic of Korea and National Research Foundation of Korea (NRF) (2017078374). She is also grateful for the support of the
Korea Institute for Advanced Study (KIAS). The research of B. Solomyak was supported in part by the Israel Science Foundation (Grant 396/15). He is grateful to the KIAS, where part of this work was done, for hospitality.

\end{document}